\newcommand{\ignore}[1]{\relax}
\newcommand{\supp}{\operatorname{supp}}
\newcommand{\R}{\mathbb R}
\newcommand{\Z}{\mathbb Z}
\newcommand{\lk}{\operatorname{lk}}
\newcommand{\p}{\mathbb P}
\newtheorem{thm}{Theorem}
\newtheorem{lem}{Lemma}[section]
\newtheorem{claim}{Claim}[section]
\newtheorem{cor}[lem]{Corollary}
\newtheorem{thma}[lem]{Theorem}
\newtheorem{theorem}{Theorem}
\newtheorem{corollary}[lem]{Corollary}
\newtheorem{coro}[lem]{Corollary}
\newtheorem{prop}[lem]{Proposition}
\theoremstyle{definition}
\newtheorem{condition}[claim]{Condition}
\newtheorem{defn}[lem]{Definition}
\newtheorem{problem}[claim]{Problem}
\theoremstyle{remark}
\newtheorem{rmk}[lem]{Remark}
\newtheorem{rem}[lem]{Remark}
\renewcommand{\setminus}{\smallsetminus}
\newcommand{\conj}{\operatorname{conj}}
\newcommand{\dd}{\partial}
\newcommand{\cp}{
{\mathbb P}}
\newcommand{\rp}{{\mathbb R}{\mathbb P}}
\newcommand{\pp}{{\mathbb P}}
\newcommand{\Log}{\operatorname{Log}}
\renewcommand{\setminus}{\smallsetminus}
\newtheorem{condition}[theorem]{Condition}
\newenvironment{proof}[1][Proof]{\noindent\textbf{#1.} }{\ \rule{0.5em}{0.5em}}
\newcommand{\chord}{\operatorname{Chord}}
\newcommand{\tang}{\operatorname{Tangent}}
\author{Grigory Mikhalkin and Stepan Orevkov}
\title{Rigid isotopy of maximally writhed links}
\address{Universit\'e de Gen\`eve,  Math\'ematiques, Battelle Villa, 1227 Carouge, Suisse}
\address{Steklov Mathematical Institute,
ul. Gubkina 8, 119991, Moscow, Russia and
IMT, Universit\'e Paul Sabatier, 118 route de Narbonne,
31062, Toulouse, France
}
\begin{document}
\begin{abstract}
This is a sequel to the paper \cite{MO-mw}
which identified maximally writhed algebraic
links in $\rp^3$ and classified them topologically.
In this paper we prove that
all maximally writhed
links of the same topological type are rigidly
isotopic, i.e. one can be deformed into another
with a family of smooth real algebraic links
of the same degree.
\end{abstract}
\maketitle

\footnote{Research is supported in part by the grants 178828, 182111
and the NCCR SwissMAP
project of the Swiss National Science Foundation (G.M.), and by RFBR grant No
17-01-00592a (S.O.).}

\section{Introduction}
A real algebraic curve $A\subset\pp^3$ is a (complex)
one-dimensional subvariety invariant with
respect to the involution of complex conjugation
$(z_0:z_1:z_2:z_3)\mapsto
(\bar z_0:\bar z_1:\bar z_2:\bar z_3)$.
We denote with $\R A$ the fixed point locus of $\conj|_{A}$,
note that $\R A=A\cap\rp^3$.
Following the classical terminology by a {\em real branch}
of $A$ we mean the image $\nu(K)$ for a connected component
$K$ of $\R\tilde A$, where $\nu: \tilde A\to A$ is a normalization.

In the case of nonsingular 
$A$
we call $L=\R A$ the 
{\em real algebraic link}, cf. \cite{MO-mw},
if every component of the normalization of $A$
has non-empty real part, and if $L$ is not contained in any proper projective
subspace of $\rp^3$.
We say that $L$ is irreducible if $A$ is irreducible.
In this paper real algebraic links as
well as algebraic curves are
assumed to be irreducible.

The degree $d\in\Z$ of a real algebraic
link $L\subset\rp^3$ 
is a positive number such that
$[A]=d[\pp^1]\in H_2(\pp^3)=\Z$,
the genus of $L$ is the genus of $A$.
Given a point $p\in\rp^3\setminus\R A$ we
denote with 
$
\pi_p:\p^3\setminus\{p\}\to\p^2
$
the linear projection from $p$ so that
$\pi_p(A)\subset\pp^2$ is a 
plane curve of the same degree $d$.
If $p$ is chosen generically then $\pi_p(A)$
is a nodal curve.
The set 
\[
(\pi_p(A)\cap\rp^2)\setminus\pi_p(\R A)
\]
is not always empty. It is a finite set consisting
of {\em solitary nodes} of $\R\pi_p(A)=
\pi_p(A)\cap\rp^2$, i.e. the points of real
intersection of pairs of complex conjugate local branches.
It was shown in \cite{Viro-en} that each node
of $\R\pi_p(A)$ may be prescribed a sign $\pm 1$ or $0$
so that the sum $w(L)$
of the signs of all nodes does not
depend on a choice of $p$ and
is an invariant of $L$.

Non-solitary nodes of $\R\pi_p(A)$ are
intersection points of pairs of real local branches of $A$.
If these local branches belong to different
real branches of $A$ then the sign of
the corresponding node is defined as zero.
Otherwise, the definition of sign at 
a non-solitary node
agrees
with the convention used in Knot Theory
for the definition of the {\em writhe} of 
the knot diagram, 
see Figure \ref{signs} (we choose any
orientations of the two local branches
that can agree with an orientation
of the real branch containing them).
\begin{figure}[h]
\includegraphics[height=18mm]{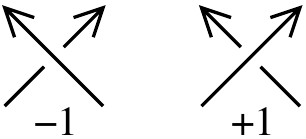}
\caption{Writhe signs for a crossing
point of two real branches on the knot diagram.
\label{signs}}
\end{figure}

In the case of solitary nodes
the signs were introduced in \cite{Viro-en}.
Accordingly, $w(L)$
is called the {\em encomplexed writhe}
(as in \cite{Viro-en}), or the {\em Viro invariant}
of $L\subset\rp^3$.
Note that the number of nodes
of $\pi_p(A)$ is $N_d-g$,
where $N_d=\frac{(d-1)(d-2)}{2}$.
Thus we have the straightforward upper bound
\begin{equation}\label{eq-ew}
|w(L)|\le N_{d}-g\le N_d
\end{equation}
for any real algebraic link $L\subset\rp^3$
of degree $d$. Accordingly, an (irreducible)
real algebraic
link of degree $d$ with $|w(L)|=N_d$ is called
a {\em maximally writhed knot} or
an {\em $MW$-knot} (since this properties implies
that $g=0$, the curve $A$ is rational and thus
$L$ is connected).

The following extremal property of 
\eqref{eq-ew} was shown in \cite{MO-short}.
\begin{thm}[\cite{MO-short}]
If $L,L'\subset\rp^3$ are $MW$-knots
of the same degree then the pairs
$(\rp^3,L)$ and $(\rp^3,L')$ are diffeomorphic.
\end{thm}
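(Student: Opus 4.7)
The plan is to exploit the saturation of $|w(L)|\le N_d-g\le N_d$. Equality throughout forces $g=0$, so $A$ is rational with normalization $\nu:\pp^1\to A$, and every node of a generic plane projection $\pi_p(A)$ contributes $\pm 1$ to $w(L)$ with a common sign $\epsilon\in\{\pm 1\}$. In particular, no non-solitary node joins distinct real branches, and all real crossings of $\pi_p(A)$ carry the same writhe sign. Replacing $L$ by its mirror image if needed, I assume $\epsilon=+1$.

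Next I would encode the pair $(\rp^3,L)$ combinatorially. Fixing a generic $p\in\rp^3\setminus\R A$, the preimages of the $N_d$ nodes of $\pi_p(A)$ under $\pi_p\circ\nu$ assemble into $N_d$ pairs on $\pp^1$, cyclically arranged along $\R\pp^1$ (real pairs for non-solitary nodes, conjugate pairs for solitary ones) decorated by over/under information. Since the knot can be reconstructed from any generic diagram augmented with this data, the decorated Gauss word recovers the ambient isotopy class of $(\rp^3,L)$. The theorem then reduces to showing that the decorated Gauss word of a degree-$d$ MW-knot with $\epsilon=+1$ is unique up to the natural symmetries of $\pp^1$.

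To establish this uniqueness I would degenerate $L$ tropically. Embed $L$ in a one-parameter family of degree-$d$ real algebraic curves of maximal writhe, defined over a real Puiseux series field, and pass to the tropical limit to obtain a real tropical curve $\Lambda\subset\R^3$ of degree $d$ carrying a Viro sign distribution on the induced subdivision of the Newton polytope of $A$. The combined constraints $g=0$ and $|w|=N_d$ force $\Lambda$ to have a specific ``Harnack-type'' combinatorial type with all Viro signs agreeing on opposite monomials, so its patchwork reconstruction produces a single standard model whose Gauss word is pinned down. Since every MW-knot of degree $d$ admits such a tropical limit, the Gauss words of $L$ and $L'$ coincide, yielding an ambient diffeomorphism $(\rp^3,L)\cong(\rp^3,L')$.

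The main obstacle is the tropical rigidity step. One must exclude competing tropical limits --- those with extraneous cycles (inconsistent with $g=0$) or non-Harnack sign distributions (inconsistent with $|w|=N_d$) --- and then show that the remaining combinatorial model is unique up to symmetries of $\R^3$ induced by automorphisms of $\rp^3$. This is a delicate classification problem for real tropical rational curves of degree $d$ obeying the patchwork Harnack condition, and its resolution is the crux of the argument; the remaining steps are then formal consequences of the tropical-to-algebraic correspondence.
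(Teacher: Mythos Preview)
This theorem is quoted from \cite{MO-short} and is not proved in the present paper; it appears here as background. The approach in \cite{MO-short} (and its generalization in \cite{MO-mw}) is direct and geometric: one shows that an $MW$-knot is hyperbolic with respect to every real tangent line, that its real tangent surface is a union of embedded hyperboloids, and that consequently all crossings in any generic plane diagram are positive and arranged in a canonical nested pattern determined by the linking numbers with lines through complex-conjugate pairs of points of $A$. From this the ambient topological type is read off explicitly (it is the torus knot $T(2,d)$ in an affine chart for odd $d$, and the analogous object for even $d$). No tropical geometry enters.

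Your proposal, by contrast, routes through a tropical limit and a patchwork reconstruction. The first two paragraphs are reasonable reductions, but the third is where the argument would have to happen, and you explicitly leave the decisive step open: you assert that the constraints $g=0$ and $|w|=N_d$ force a unique ``Harnack-type'' tropical combinatorial model, yet you neither state precisely what this model is nor indicate why competing tropical limits are excluded. There is no established correspondence theorem guaranteeing that an $MW$-knot over a non-archimedean field tropicalizes to a real tropical curve whose combinatorics determine the ambient isotopy type of the original knot, let alone that this tropical type is unique. As written, the proposal identifies the crux but does not resolve it, so it is not a proof. If you wish to pursue this line, you would at minimum need a real tropical rigidity statement for rational spatial curves with maximal writhe, and that statement is not in the literature; the existing proofs bypass tropical geometry entirely and work with the hyperbolicity criteria of \cite[Theorem~2]{MO-mw}.
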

In other words, the topological type of an
$MW$-knot of a given degree is unique.

Consider now the case of an arbitrary $g$.
By Harnack's inequality 
the number of real branches of $A$ is at most
$g+1$. The (irreducible) real curves $A$ with $g+1$
real branches
are known as $M$-curves.
In this case the complement
$A\setminus\R A$ consists
of two components bounding $\R A$.
The orientations of two component of
$A\setminus\R A$ yield a pair of opposite
orientations of $L$. Through these orientations,
an orientation of one real branch of $A$ determines
the orientation of all other real branches,
and we define the sign $\sigma(q)=\pm 1$
also in the case if a node $q\in\R\pi_p(A)$
corresponds
to a crossing point
of different real branches of $A$
according to Figure \ref{signs}.
We set
\begin{equation}\label{eq-wl}
w_\lambda(L)=w(L)+
\sum\limits_q\sigma(q)=w(L)+
2\sum\limits_{K\neq K'}\lk(K,K'),
\end{equation}
see \cite{MO-mw}. Here the first sum is taken over
the nodes of $q$ corresponding to intersection
points of different real branches of $A$ while the
second sum is taken over pairs of distinct real branches
$K,K'$.

Clearly, if $|w_\lambda(L)| = N_d$ 
then
$L$ consists of a single real branch
as we may choose the point $p$ so that 
one of the nodes corresponds to a crossing
point of two different branches of $A$.
In the same time we also have the inequality
\[
|w_\lambda(L)|\le N_d-g
\]
which has a chance to be sharp
even for a multicomponent $L$.
\begin{defn}
A {\em maximally writhed} real algebraic link
(or an $MW_\lambda$-link) $L\subset\rp^3$
is a real algebraic link in $g+1$ real branches,
$g\ge 0$,
such that the irreducible real algebraic curve
$A$ with $\R A=L$ is of genus $g$ and that
$|w_\lambda(L)|\le N_d-g$.
\end{defn}
Note that $N_d>g$ as, otherwise $A$ must be planar.
Therefore, $w_\lambda(L)\neq 0$ if
$L$ is an $MW_\lambda$-link.
We refer to the sign of $w_\lambda(L)$ as
the {\em chirality} of an $MW_\lambda$-link $L$. 

Let $\alpha=(a_0,\dots,a_g)$ be a partition
of the number $d-2$ into $l=g+1$ positive integer numbers.
This means that $a_0\ge\dots\ge a_g$ are positive integer 
numbers such that $\sum\limits_{j=0}^ga_j=d-2$.
In \cite{MO-mw} to each $\alpha$ we
have associated 
a (topological) link $W_g(\alpha)$
in $g+1$ components.
Namely, $W_g(\alpha)$ is a
link that sits on the boundary
(the union of $g+1$ hyperboloids) of 
a regular neighborhood of the Hopf link $H$
in $g+1$ component.
Each of the $g+1$ hyperboloids contains
a single component of $W_g(\alpha)$.
Furthermore, the linking number
of the $j$-th component $K_j$ (enhanced
with some orientation)
and the $j$th component of the Hopf link is $a_j+2$
while the linking number of $K_j$ with any other
component of $H$ is $a_j$.
\begin{thm}[\cite{MO-mw}]\label{thm-mwl}
For every $MW_\lambda$-link $L$ of degree $d$
there exists a partition $\alpha=(a_0,\dots,a_g)$ such that 
$(\rp^3,L)$ and $(\rp^3,W_g(\alpha))$
are diffeomorphic.

Conversely,
for every $\alpha$ there exists an $MW_\lambda$-link
$L$ of degree $d$ such that the pairs
$(\rp^3,L)$ and $(\rp^3,W_g(\alpha))$
are diffeomorphic.
\end{thm}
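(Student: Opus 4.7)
The plan is to study the link via a generic linear projection $\pi_p \colon A \to C_p \subset \pp^2$ from a point $p \in \rp^3 \setminus \R A$. For generic $p$, the real nodal curve $C_p$ has degree $d$, genus $g$, and exactly $N_d - g$ nodes; moreover, for an M-curve, each node contributes to $w_\lambda(L)$ a sign in $\{-1, +1\}$ (solitary nodes and single-branch self-crossings via Viro's convention, distinct-branch crossings via the signs $\sigma(q)$ appearing in \eqref{eq-wl}). Hence the extremal hypothesis $|w_\lambda(L)| = N_d - g$ forces \emph{all} nodes of $C_p$ to share the same sign $\epsilon = \pm 1$. In particular, there are no solitary nodes, every self-crossing of each real branch $K_j$ has writhe sign $\epsilon$, and every crossing between distinct real branches $K_j, K_{j'}$ also has sign $\epsilon$. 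This uniform sign-coherence is the sole structural input driving the rest of the argument.

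The next step is to upgrade this local node information into a global topological normal form. One should argue that a diagram in which every self-crossing of $K_j$ has sign $\epsilon$ forces $K_j$ to be isotopic to a torus-type curve on some standard hyperboloid, and that the constraint on crossings between $K_j$ and $K_{j'}$ fixes their linking number. Assembling these $g+1$ pieces, using irreducibility and the degree and genus constraints on $A$, one obtains that $L$ sits on the boundary of a regular neighborhood of a $(g+1)$-component Hopf link $H$ in $\rp^3$. The partition $\alpha = (a_0,\dots,a_g)$ is then read off from the linking numbers of $K_j$ with the cores of the hyperboloids ($a_j+2$ with its own core, $a_j$ with each of the others), and the identity $\sum a_j = d-2$ follows by intersecting $A$ with a generic real plane containing the axis of one hyperboloid and comparing to Bezout's theorem together with the linking-number count.

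For the converse, given any partition $\alpha$ of $d-2$ into $g+1$ positive integers, one constructs an $MW_\lambda$-link of topological type $W_g(\alpha)$ by an explicit Viro-type patchwork: build an irreducible real algebraic curve $A \subset \pp^3$ of degree $d$ and genus $g$ whose real part realizes the prescribed winding data on each of the $g+1$ hyperboloids, and verify directly from the patchwork diagram that every node of a generic projection contributes with the same sign, so that $|w_\lambda(L)| = N_d - g$.

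The main obstacle will be the second paragraph: promoting node-level sign coherence to a global isotopy-class statement. One must rule out alternative nodal diagram types whose writhes happen to achieve $N_d-g$ but whose ambient isotopy class differs from $W_g(\alpha)$. I expect the cleanest route is an inductive argument peeling off one crossing at a time while preserving sign-coherence, supplemented by a stability-as-$p$-varies argument along a generic one-parameter family of projection centers, which propagates the sign condition and excludes unwanted Reidemeister-II-like transitions in the family of diagrams.
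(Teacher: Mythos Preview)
This theorem is quoted from \cite{MO-mw} and is not proved in the present paper, so there is no in-paper proof to compare against directly. From the way the present paper invokes \cite{MO-mw}, however, one can see the shape of the original argument, and it differs substantially from your proposal.

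Your opening step is correct and is indeed the genuine starting point: $|w_\lambda(L)|=N_d-g$ forces every node of a generic projection to carry the same sign. The gap is precisely the one you flag yourself. A diagrammatic argument of the kind you sketch --- peel off crossings, watch Reidemeister-II events as the projection center $p$ varies --- cannot by itself convert sign-coherence into an ambient isotopy statement: sign-coherent diagrams arise for many non-isotopic links, and the algebraicity of $A$ has to be used far more sharply than merely through the node count. The route actually taken in \cite{MO-mw}, as visible through the citations in this paper, is three-dimensional and geometric rather than diagrammatic. From sign-coherence one deduces that every real plane tangent to $\R A$ meets $A$ only in real points (condition $(ii)$ of \cite[Theorem 2]{MO-mw}); this hyperbolicity forces the real tangent surface $\bigcup_{q\in L}T_qL$ to be a disjoint union of embedded hyperboloids (\cite[Lemma 3.10]{MO-mw}) and makes the linking number of any two oriented tangent lines positive (\cite[Lemmas 4.6--4.7]{MO-mw}). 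The integers $a_j$ are then read off as $2|\lk(\ell_{q\bar q},K_j)|$, the linking numbers of the real branches with the real line through a complex-conjugate pair $q,\bar q\in A$ (\cite[Lemma 4.12]{MO-mw}), and $\sum a_j=d-2$ is B\'ezout for a plane through $\ell_{q\bar q}$. The tangent-surface hyperboloids are the solid tori carrying the components $K_j$. This chain of implications uses the complex curve $A$ and the pencil of planes through $\ell_{q\bar q}$; it is not recoverable from a single planar diagram or a one-parameter family of them. For the converse, patchworking is a plausible route; the present paper also remarks that its own Theorem \ref{thm-gen} furnishes an alternative existence proof.
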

We say that an $MW_\lambda$-link $L$
corresponds to the partition $\alpha$
if $(\rp^3,L)$ and $(\rp^3,W_g(\alpha))$
are diffeomorphic.

The following result is the main theorem of this paper.
It strengthens the second half of Theorem \ref{thm-mwl}.
Two real algebraic links are said {\em rigidly isotopic}
if they can be connected with a one-parametric family  
of real algebraic links of the same degree.
\begin{thm}\label{thmain}
If $L$ and $L'$ are $MW_\lambda$-links
of the same chirality corresponding to the same
partition $\alpha$ 
then $L$ and $L'$ are rigidly isotopic.
\end{thm}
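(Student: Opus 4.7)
The natural plan is to reduce the three-dimensional rigidity problem to a two-dimensional one by a generic projection $\pi_p$, and then exploit the extremal equality $|w_\lambda(L)|=N_d-g$ to show that the resulting plane nodal curves move in a connected family. For a generic projection center $p\in\rp^3\setminus\R A$, the curve $C:=\pi_p(A)\subset\pp^2$ has exactly $N_d-g$ nodes, and by the maximality of the writhe every one of them must carry the same sign $\varepsilon$, namely the chirality of $L$: no node can be solitary of the opposite sign, and no crossing between two distinct real branches can be assigned sign $0$. This is an extremely rigid picture -- a real plane curve of degree $d$ and geometric genus $g$ with all $N_d-g$ nodes of sign $\varepsilon$ and real branches arranged according to $\alpha$ -- which will be the basic object to deform.

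For each admissible pair $(\alpha,\varepsilon)$ I would first construct a canonical model $L_{\alpha,\varepsilon}$ via the patchworking/tropical construction that already realises $W_g(\alpha)$ in the proof of Theorem~\ref{thm-mwl}: the link can be placed on the union of $g+1$ quadric hyperboloids lying in a thin tubular neighbourhood of the Hopf link, with the apparent crossings of its projection in standard position. The theorem follows once one shows that every $MW_\lambda$-link of type $(\alpha,\varepsilon)$ is rigidly isotopic to $L_{\alpha,\varepsilon}$, which in turn reduces to the path-connectedness of the space $\mathcal{M}(\alpha,\varepsilon)$ of such links. Via the projection above, there is a natural map from $\mathcal{M}(\alpha,\varepsilon)$ to the real Severi stratum $\mathcal{S}(\alpha,\varepsilon)$ of plane curves satisfying the conditions of the previous paragraph; its fibre over a fixed $C$ is, after choosing a projection centre, a connected algebraic set parametrising lifts of $C$ to space curves of the prescribed type. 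So the proof reduces to showing $\mathcal{S}(\alpha,\varepsilon)$ is connected, which I would obtain by induction on $d$ and on $\alpha$: slide the same-sign nodes until the curve degenerates to a standard reducible curve (for instance a union of lower-degree $MW_\lambda$-curves), apply the induction on each factor, and smooth back by a real patchworking step that preserves the sign and the number of nodes.

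The principal obstacle is precisely keeping the whole deformation inside $\mathcal{M}(\alpha,\varepsilon)$. A priori the family might cross a wall on which two nodes collide, a node switches sign through a solitary/hyperbolic transition, the geometric genus jumps, the space lift acquires a singularity, or the link changes topological type; any of these would eject the family from the $MW_\lambda$-locus. The heart of the argument must therefore be a local analysis, drawing on the tropical description of $MW_\lambda$-links from \cite{MO-mw}, that excludes each such wall and shows that once $(\alpha,\varepsilon)$ is fixed, the complement of $\mathcal{M}(\alpha,\varepsilon)$ inside the relevant universal family has real codimension $\ge 2$. This codimension statement is the genuine source of the rigidity asserted by Theorem~\ref{thmain}.
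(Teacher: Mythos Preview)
Your strategy is genuinely different from the paper's, but it has a real gap at exactly the point you flag as ``the heart of the argument''. The claim that the complement of $\mathcal{M}(\alpha,\varepsilon)$ in the relevant family has real codimension $\ge 2$ is not established, and in fact the obvious walls you list (collision of two nodes, a crossing becoming tangential, a node passing through the image of a cusp of the projection) are \emph{real codimension one} strata in the Severi variety. Over $\mathbb{R}$ the complement of a codimension-one set is typically disconnected, so you cannot simply invoke transversality; you must either show that each wall-crossing can be undone inside $\mathcal{M}(\alpha,\varepsilon)$, or exhibit explicit paths that avoid the walls. Your inductive suggestion (degenerate to a reducible curve and patchwork back) would have to control all of these transitions simultaneously, and you give no mechanism for doing so. A second, smaller gap: the fibre of your map $\mathcal{M}(\alpha,\varepsilon)\to\mathcal{S}(\alpha,\varepsilon)$ consists of the \emph{smooth} lifts of $C$, which is an open set in a real affine space with a real hypersurface removed --- again not obviously connected.

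The paper avoids the plane Severi variety entirely. Instead it degenerates every $MW_\lambda$-link, inside $\rp^3$, to a \emph{nodal Hopf link}: a nodal $MW_\lambda$-curve with the maximal number $d-g-3$ of spatial nodes, hence with every planar loop of degree $1$. Such curves are then shown to be parametrised, up to projective transformation, by a Hopf divisor on a real planar chord diagram; the key tools are a Riemann--Roch and Abel--Jacobi theory for the singular surface $\Sigma$ obtained by contracting the chords, together with a non-speciality criterion (Corollary~\ref{nsD}) guaranteeing $\dim|D|_\Sigma=3$. Connectedness then comes from the connectedness of the space of such divisors plus an explicit \emph{chord-slide} move (Lemma~\ref{lem-slide}) that relates the different nodal Hopf links to which a given $MW_\lambda$-link can degenerate. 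In effect the paper replaces your codimension argument by an intrinsic curve-theoretic parametrisation in which the relevant moduli space is manifestly connected.
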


This theorem is a particular case of 
Theorem \ref{thm-gen}
(see Section \ref{s-l}) where we describe rigid isotopy classes of nodal curves
of any genus. The proof of Theorem 4 is based on the theory of divisors
on nodal Riemann surfaces and on the properties of $MW_\lambda$-curves
established in \cite{MO-mw}.

Note that in the case of rational curves 
Theorem 2 corresponds to \cite[Theorem 1]{MO-short}
with a simpler proof specific for genus 0.
In contrast to that, the only known to us way
to prove Theorem 3 is to deduce it from Theorem 4,
even in the case of rational curves.
In particular, this proof requires
\cite[Theorem 2]{MO-mw} in whole generality. 
Note that Theorem 4 provides another proof of
\cite[Theorem 3]{MO-mw}, i.e. 
existence of $MW_\lambda$-links isotopic to $W_g(\alpha)$
for any partition $\alpha=(a_0,\dots,a_g)$.

\section{Algebraic Hopf links}
\begin{defn}\label{iaHl}
A real algebraic link $L$ with $l$ 
real branches is called {\em an irreducible algebraic Hopf link}
if it is irreducible, its degree
is $l+2$, and each real branch
of $A$ is non-contractible in $\rp^3$.
\end{defn}
An irreducible algebraic Hopf link 
has minimal possible degree among all irreducible
real algebraic links with $l$ real branches such that
each real branch
is non-contractible in $\rp^3$ by Proposition \ref{Mldeg}.
By Proposition \ref{HMW} it is always a Hopf link topologically. 
\ignore{
\begin{rmk}
By Harnack's inequality \cite{Harnack} $g+1$ is the maximal possible
number of real branches of
a smooth irreducible curve $A$ of genus $g$.
Real curves with this maximality property
are known as {\em M-curves}. 
Extending this terminology, we call the real algebraic links
corresponding to M-curves {\em M-links}.
Note that all $MW_\lambda$-links are M-links. 
\end{rmk}
}
\begin{prop}\label{Mldeg}
Let $L\subset\rp^3$ be an irreducible real algebraic link 
of degree $d>1$ such that each component
of $L$ is non-contractible in $\rp^3$.
Then $d\ge l+2$.
\end{prop}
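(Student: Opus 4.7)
My plan is a two-step argument: an easy parity bound giving $d\ge l$ and $d\equiv l\pmod 2$, followed by a chord-pencil argument to exclude the borderline case $d=l$.

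\textbf{Step 1 (Parity).} For a generic real hyperplane $H\subset\rp^3$, B\'ezout gives $|A\cap H|=d$ in $\p^3$. Decompose $d=r+2c$, where $r$ counts real intersection points and $c$ counts complex conjugate pairs. Each branch $K_i$ is non-contractible in $\rp^3$, so $[K_i]\cdot[H]=1\in\Z/2$; hence the transverse intersection $|K_i\cap H|$ is an odd positive integer. Summing over the $l$ branches yields $r\ge l$ with $r\equiv l\pmod 2$, so
\[
d\ge l\quad\text{and}\quad d\equiv l\pmod 2.
\]

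\textbf{Step 2 (Excluding $d=l$).} Assume $d=l$ for contradiction. Then $c=0$ and $|K_i\cap H|=1$ for every $i$ and for a generic real $H$. Pick two distinct points $p,q\in K_1$ and consider the pencil $\PP\subset\rp^{3*}$ of real hyperplanes containing the chord $\overline{pq}$. Since the locus of hyperplanes non-transverse to $A$ has codimension $1$ in $\rp^{3*}$, a generic $H\in\PP$ is transverse to $A$. Such an $H$ contains $\{p,q\}\subset K_1$, forcing $|K_1\cap H|\ge 2$, and for each $j\ge 2$ the non-contractibility of $K_j$ still forces $|K_j\cap H|$ to be odd, hence $\ge 1$. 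Summing gives
\[
|\R A\cap H|\ \ge\ 2+(l-1)\ =\ l+1,
\]
contradicting $|\R A\cap H|\le|A\cap H|=d=l$.

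Combining the two steps gives $d\ge l+2$. The delicate point is the genericity check in Step 2: one needs a member of the one-dimensional pencil $\PP\cong\rp^1$ that is simultaneously transverse to $A$ at $p,q$ and to each other branch $K_j$. Since each of these is a codimension-$1$ condition on $\rp^{3*}$, it fails only at finitely many $H\in\PP$, and a cofinite set of hyperplanes in $\PP$ will work.
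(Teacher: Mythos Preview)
Your proof is correct, but it takes a longer route than the paper's one-line argument. The paper simply chooses a pair of complex conjugate points $z,\bar z\in A\setminus\R A$ (which is non-empty since the irreducible complex curve $A$ has real dimension~$2$ while $\R A$ has real dimension~$1$) and takes any real plane $H$ through them. Since $A$ is not contained in a plane, $A\cap H$ is finite of degree~$d$; it contains $z,\bar z$ and, by non-contractibility, at least one point of each $K_i$, so $d\ge l+2$ in one stroke.

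Your approach avoids using non-real points of $A$ altogether and instead combines a parity bound with an exclusion of the boundary case via a secant pencil. This works, but note two small points. First, your final genericity remark is not quite right as stated: a codimension-$1$ locus in $\rp^{3*}$ can contain the whole pencil $\PP$ (namely when the chord $\overline{pq}$ is a tangent line of $A$), so you should say ``choose $p,q\in K_1$ generically so that $\overline{pq}$ is not tangent to $A$'' before concluding that only finitely many $H\in\PP$ are non-transverse. Second, you do not actually need transversality in Step~2: for \emph{any} plane $H\supset\{p,q\}$ not containing $A$, non-contractibility alone gives $K_j\cap H\neq\emptyset$ for every $j$, hence $|A\cap H|\ge l+1$ and so $d\ge l+1$; combined with $d\equiv l\pmod 2$ from Step~1 this already yields $d\ge l+2$. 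So the delicate genericity discussion can be dropped entirely.

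In short: your argument is valid but the paper's trick of passing a real plane through a conjugate pair of non-real points of $A$ gives the $+2$ for free and makes the whole proof a single sentence.
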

\begin{proof}
Take a pair of conjugate points on $A\setminus\R A\subset\pp^3$
and consider a real plane through this pair.
Since any plane in $\rp^3$ must intersect each non-contractible component
of $L$ we get at least $l+2$ intersection points.
\end{proof}
\ignore{
\begin{rmk}
Note that if we drop the condition that a real algebraic link $L$
with non-contractible components is an M-link then the conclusion
of Proposition \ref{Mldeg} is no longer true. For example,
it is easy to see that there exist real algebraic links $L$ obtained
as an intersection of a hyperboloid quadric and a real surface
of degree $d$ in $\rp^3$ such that $L$ consists of $d$ non-contractible
components. 
In the same time the genus of $g$ is $(d-1)^2$ by the adjunction formula.
\end{rmk}
}
\begin{prop}\label{HMW}
An irreducible algebraic Hopf link $L$ or its mirror image 
is an $MW_\lambda$-link
$W_{l-1}(1,\dots,1)$, and thus is topologically isotopic to a Hopf
link, i.e. the union of $l$ fibers of the (positive) 
Hopf fibration $\rp^3\to S^2$.
\end{prop}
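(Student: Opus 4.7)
The plan is to verify that $L$ (possibly after replacing it by its mirror image) satisfies the definition of an $MW_\lambda$-link, and then to apply Theorem~\ref{thm-mwl} with the unique partition forced by the numerical constraints.

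\textbf{Step 1: $A$ is an M-curve of genus $g = l - 1$.}
Harnack's inequality gives $l \leq g+1$, so $g \geq l-1$. For the opposite inequality, fix a pair of complex conjugate points $q, \bar q \in A \setminus \R A$ (nonempty since $d > 1$) and let $\ell$ be the real line through them. By Proposition~\ref{Mldeg}, every real plane containing $\ell$ meets $A$ in at least $l + 2 = d$ points: the pair $q, \bar q \in \ell$ and one real intersection with each of the $l$ non-contractible branches. Bezout then forces equality and transversality. Projection from $\ell$ thus realises a completely real $g^1_l$ on $\tilde A$, whose associated degree-$l$ map $\tilde A \to \pp^1$ has minimal ramification. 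A Pl\"ucker--Riemann--Hurwitz analysis, bounding the ramification locus via the tangent developable of $A \subset \pp^3$, yields $g \leq l - 1$.

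\textbf{Step 2: $L$ is maximally writhed.}
Granted that $A$ is an M-curve of genus $g = l-1$, it remains to check $|w_\lambda(L)| = N_d - g$. The pencil structure of Step~1 implies that any two distinct real branches $K_i, K_j$ of $L$, oriented via the complex M-curve structure, have linking number $\pm 1$ with a uniform sign. Analysing a generic plane projection $\pi_p \colon A \to \pp^2$, the same rigidity forces every node of $\pi_p(A)$ (real crossing or solitary) to contribute to $w_\lambda$ with the same sign, after possibly replacing $L$ by its mirror. Summing over the $N_d - g$ nodes yields $|w_\lambda(L)| = N_d - g$, so $L$ is an $MW_\lambda$-link.

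\textbf{Step 3: Identifying the partition.}
By Theorem~\ref{thm-mwl}, $L$ corresponds to some partition $\alpha = (a_0, \ldots, a_{l-1})$ of $d - 2 = l$ into $g + 1 = l$ positive integers, with $(\rp^3, L)$ diffeomorphic to $(\rp^3, W_{l-1}(\alpha))$. The only partition of $l$ into $l$ positive integers is $(1, 1, \ldots, 1)$; hence $L$ (or its mirror, depending on the sign of $w_\lambda$) is topologically isotopic to the Hopf link of $l$ fibres, as claimed.

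The principal difficulty is Step~2: converting the projective pencil rigidity of Step~1 into a sign statement about writhe contributions of $\pi_p(L)$. The strategy is to use that the pencil forces the $l$ real intersection points on a real plane $H \supset \ell$ to have tangent directions that rotate coherently as $H$ rotates in the pencil, which then pins down the sign of every crossing of $\pi_p(L)$ and of every solitary node in a uniform way.
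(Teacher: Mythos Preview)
Your approach is substantially harder than the paper's and, as written, does not close.

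The paper's proof is essentially one line: since each of the $l$ real branches is non-contractible, every real plane meets $L$ in at least $l=d-2$ points; this is exactly condition~$(iii^t)$ of \cite[Theorem~2]{MO-mw}, which is one of the equivalent characterizations of $MW_\lambda$-links. Hence $L$ (or its mirror) is $MW_\lambda$, and the partition argument of your Step~3 finishes. In particular, the paper never computes $g$ or $w_\lambda$ directly: both the M-curve property and the extremality of the writhe are packaged into the cited characterization theorem.

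Your Steps~1 and~2, by contrast, attempt to verify the \emph{definition} of $MW_\lambda$ directly, and both contain genuine gaps. In Step~1, the inequality $g\le l-1$ is not established: projection from $\ell$ does give a degree~$l$ map $\tilde A\to\pp^1$ whose real fibres are totally real, but the phrase ``minimal ramification'' is undefined, and the appeal to a ``Pl\"ucker--Riemann--Hurwitz analysis bounding the ramification locus via the tangent developable'' is not an argument---Riemann--Hurwitz gives $2g-2=-2l+R$, and you have not bounded $R$ by $4l-4$. In Step~2 you yourself flag the difficulty and offer only a strategy (``tangent directions rotate coherently''), not a proof that every node of $\pi_p(A)$ contributes with the same sign. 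Nothing in Step~1 as stated forces this sign coherence; it is precisely the content of the hard direction of \cite[Theorem~2]{MO-mw}.

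The fix is simply to invoke condition~$(iii^t)$ as the paper does: the non-contractibility hypothesis gives it for free, and the characterization theorem does the rest.
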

\begin{proof}
The link $L$ satisfies to condition $(iii^t)$ of
Theorem 2 of \cite{MO-mw}. Namely, any
plane section in $\rp^3$ intersects $L$ in at least $d-2=l$
points (one for each component). 
\ignore{
We claim that $L$ satisfies to the condition $(ii)$ of
Theorem 2 of \cite{MO-mw}, i.e. any real plane tangent to $L$
has only real intersections with $A$.
To see this we note that any real plane $P$ intersects any
component of $L$ in odd number of points (counted with multiplicity).
If $P$ is tangent to a component of $L$ then $P$ must intersect 
this component yet in another point, or have a tangency of order 3.
Thus we already have at least $l+2$ real intersection
points of $P$ and $L$, and thus there may be no non-real
intersection points.
}
By Theorem 2 of \cite{MO-mw}
$L$ or its mirror image is topologically isotopic to $W_{l-1}(1,\dots,1)$
as $(1,\dots,1)$ is the only partition of $l$ into the sum of
$l$ positive integers. 
\end{proof}

\ignore{
Recall 
that a divisor on a closed Riemann surface $S$ is a formal
linear combination $D=\sum\limits_{j=1}^n a_jp_j$, $a_j\in\Z$, $p_j\in S$. The degree of $D$ is defined as
$\deg(D)=\sum\limits_{j=1}^n a_j\in\Z$.
The divisor is called {\em effective} if $a_j\ge 0$ for all $j=1,\dots,n$.
In the case if $S$ is real,
i.e. enhanced with an orientation-reversing involution
$\conj:S\to S$
preserving the conformal structure on $S$ with the fixed-point locus $\R S$,
we say that a divisor is real if it is invariant under $\conj$.
We refer to a real Riemann surface $S$
as a real curve. In particular, a real algebraic
curve $A\subset\pp^r$ invariant with respect to
the involution of complex conjugation is an example
of a real curve. 
Furthermore, $A\subset\pp^r$ comes with 
a family of divisors, called {\em hyperplane sections}
cut on $A$ by hyperplanes
in $\pp^r$. 
These divisors are linearly equivalent, i.e.
their
difference can be presented as $f^{-1}(0)-f^{-1}(\infty)$ for
a meromorphic function $f:A\to\pp^1$. 

All divisors linearly equivalent to $D$ form
the projective space $|D|\approx\pp^r$ ($r=r(D)$ is called
the {\em rank of $D$}) while the divisors of $|D|$
containing $p\in S$ form a hyperplane in $S$
unless $p$ is contained in all divisors of $|D|$.
The part contained in all divisors of $|D|$
is called the {\em fixed part} $F(D)$,
it is the divisor of the highest degree
with the property 
$|D|=|D-F(D)|$.
The divisor is {\em mobile} if $F(D)=0$.
A mobile divisor $D$ with $r(D)>0$ defines 
a map
\begin{equation}\label{iD}
\iota_D:S\to\pp_D\approx\pp^r,
\end{equation}
where $\pp_D$ is the projective
space dual to $|D|$ by mapping $p\in S$
to the hyperplane in $|D|$ of sections
vanishing at $p$. 
In the case when $S$ and $D$
are real,
the map $\iota_D$ is invariant 
with respect to the involution of 
complex conjugation so that we have
\[
\iota_D:\R S\to\rp_D\approx\rp^r.
\]
We say that $A\subset\pp^r$ is given by a
complete linear system if $A$ is
not contained
in any proper projective subspace of $\pp^r$
and $r$ is the rank of a hyperplane section
divisor of $A$.
\ignore{
This gives us a way to pass back and forth
between a real curve $A\subset\pp^r$ not contained
in any proper projective subspace of $\pp^r$,
and such that its hyperplane section has 
and a pair $(S,D)$, where $S$ is a real Riemann
surface and $D$ is a real divisor on $S$ 
with $r(D)=r$.
}
Up to a projective equivalence
a curve $A\subset\pp^r$ given by a complete linear system 
is determined by a pair $(S,D)$ of
a real Riemann surface $S$ and 
a real divisor $D$ with $r(D)=r$.
Vice versa, given a curve $A\subset\pp^r$
we obtain a Riemann surface $S$ by resolving
$A$ (if $A$ is singular), and take a hyperplane
section divisor $D$.

Recall that
a canonical divisor $K$ is the zero set of a non-trivial 
holomorphic form on $S$.
All canonical divisors on $S$ are 
linearly equivalent,
and that any divisor linear equivalent
to a canonical divisor is a canonical divisor itself. 
A divisor $D$ is called special if $K-D$ is linearly equivalent
to an effective divisor.
Otherwise $D$ is called non-special.
By the Riemann-Roch theorem we have
\begin{equation}\label{RR}
r(D)=d-g+r(K-D)+1,
\end{equation}
where $g$ is the genus of $S$, $d=\sum\limits_{j=0}^na_j$
is the degree of $D$, and we set $r(D)=-1$ in the case if $|D|=\emptyset$,
see e.g. \cite{GrHa}.
}

\ignore{
Algebraic Hopf links are 
real algebraic M-curves such that
all their real components are non-contractible
in the ambient projective space.
Such curves were considered by
Huisman in \cite{Huisman}.
Following \cite{Huisman} we say that a real algebraic
link $A\subset \rp^3$ is {\em unramified} if
$A$ does not have real bitangent planes and every real
osculating plane to $A$ has a contact of order 3
at its tangency point.

According to condition $(iii)^t$
of Theorem 2 of \cite{MO-mw} every
$MW_\lambda$-link intersects ..
}

\begin{lem}[cf. Theorem 2.5 of \cite{Huisman}]
\label{lem-ns}
An effective real divisor $D=\sum\limits_{j=1}^n a_jp_j$, $a_j>0$,
$p_j\in S$, on a real curve $S$ is non-special if at least $g$ distinct
real branches of $A$ intersect
$\{p_1,\dots,p_n\}$.
\end{lem}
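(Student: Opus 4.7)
The plan is to reduce non-speciality of $D$ to a zero-counting question for real holomorphic 1-forms on $S$ and to resolve it via a parity argument on the real circles. Assuming toward contradiction that $D$ is special, Riemann-Roch yields $H^0(S, K-D) \neq 0$. Since $D$ and the canonical class of $S$ are invariant under $\conj$, the space $H^0(S, K-D)$ carries a real structure, and I can choose a nonzero \emph{real} holomorphic 1-form $\omega$ on $S$ with $(\omega) \ge D$.

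Next I would examine $\omega$ along the $g$ pairwise distinct real components $C_1, \dots, C_g$ of $\R S$ meeting $\supp(D) = \{p_1, \dots, p_n\}$, provided by hypothesis. Since $\omega$ is real, each restriction $\omega|_{C_i}$ is a real analytic 1-form on the circle $C_i$, and $(\omega) \ge D$ forces it to vanish at the selected point of $C_i \cap \supp(D)$. It cannot vanish identically on $C_i$, for otherwise the identity principle applied along the real analytic 1-manifold $C_i$ would force $\omega \equiv 0$ on $S$.

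The main obstacle, and crux of the argument, is the following parity claim: every nonzero real analytic 1-form on $S^1$ has total vanishing order that is even, hence at least $2$ as soon as it vanishes at all. I would prove this by writing $\omega|_{C_i} = f(\theta)\, d\theta$ in a periodic coordinate; the function $f$ changes sign at a zero precisely when the vanishing order is odd, and $f$ must return to its original sign after looping around $S^1$, so the number of odd-order zeros is even. Combined with the (automatically even) contribution from even-order zeros, this makes the total vanishing order even.

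Summing the resulting lower bound over the $g$ disjoint real components yields at least $2g$ zeros of $\omega$ on $\R S$ counted with multiplicity, contradicting $\deg(\omega) = 2g - 2$ for a nonzero section of the canonical bundle of a genus-$g$ curve. Hence $D$ is non-special. The only delicate point is the parity lemma above; once it is in hand, the rest of the argument is a direct application of Riemann-Roch, the identity principle, and the degree count for $K$.
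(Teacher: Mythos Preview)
Your proof is correct and follows essentially the same route as the paper's: speciality yields a real holomorphic form whose zero divisor dominates $D$, the parity of its zero order on each real branch forces at least $2g$ zeros, and this contradicts $\deg K = 2g-2$. The paper is terser, simply asserting the parity fact, while you spell out both the choice of a real section via the real structure on $H^0(S,K-D)$ and the sign-change argument on $S^1$.
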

\begin{proof}
By the Riemann-Roch Theorem, a divisor $D$ is special
if and only if the difference $K-D$ of the canonical
class $K$ and the divisor $D$ is representable 
with an effective divisor $D'$. 
Suppose that there exists a holomorphic form
whose zero divisor is $D+D'$.
But the zero divisor of a form
on each real branch of a curve must be even (if counted with multiplicities). Thus the degree of $D+D'$
is at least $2g$ while the degree of the canonical 
class is $2g-2$.
\end{proof}

By a {\em real Riemann surface} we mean an (irreducible)
Riemann surface $S$ enhanced with an antiholomorphic
involution $\conj$. 
Its {\em real locus} $\R S$ is the fixed locus
of $\conj$.
By Harnack's inequality, 
$g+1$
is the maximal possible
number of real branches of $S$ if
$g$ is the genus of $S$.
Real curves with this maximality property
are known as {\em $M$-curves}.

Recall that all effective divisors linearly
equivalent to an effective
divisor $D$ of degree $d$
form a projective space
$|D|\approx\pp^r$ whose dimension is called
the rank $r(D)$ of the divisor.
If the divisor is base point free, i.e.
every point of $S$ is {\em not} contained
in some divisor from $|D|$ then
in addition we have 
the map
\[
\iota_D:S\to |D|^\vee\approx\pp^r
\] to the space
$|D|^\vee$ projectively dual to $|D|$.
The point $x\in S$ is mapped
to the hyperplane in $|D|$ consisting
of divisors containing $x$.
If $S$ is a real Riemann surface and $D$ is 
a divisor invariant with respect to the conjugation
then we say that $D$ is a {\em real divisor}.
In this case the map $\iota_D$ is also real,
i.e. equivariant with respect to the conjugation
and the image $A=\iota_D(S)\subset\pp^r$ 
is a real projective curve.

\begin{prop} \label{prop-iHl}
Let $S$ be a real Riemann surface of genus $g=l-1$
with $l$ real branches
(i.e. $S$ is an M-curve).
and $D$ be a real divisor of degree $l+2$ such that every 
real branch
of $S$ contains odd number of points from $D$ (counted 
with multiplicity). Then the algebraic curve 
$A=\iota_D(S)\subset \pp^3$ 
corresponding to $(S,D)$ is an irreducible algebraic Hopf link.
In particular, $A$ is non-singular, and $r(D)=3$.
\end{prop}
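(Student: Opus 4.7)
The plan is to prove the proposition in three stages: first establish that $D$ is non-special (yielding $r(D)=3$); then show that $D$ is very ample, so that $\iota_D$ embeds $S$ as a smooth curve $A\subset\pp^3$ of degree $l+2$; and finally verify that each real branch of $A$ is non-contractible in $\rp^3$, so that $A$ is an irreducible algebraic Hopf link. The first stage is immediate from Lemma \ref{lem-ns}: since $D$ meets all $l\ge g$ real branches of $S$ with positive multiplicity, it is non-special, and Riemann--Roch gives $h^0(D)=d-g+1=4$, hence $r(D)=3$.

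The crux is very ampleness, which by Serre duality and Riemann--Roch amounts to showing $h^0(K-D+p+q)=0$ for every $p,q\in S$ (including $p=q$, which covers tangent separation; base-point-freeness is handled by the analogous argument with a single subtracted point $p_0$). Suppose for contradiction an effective $E\sim K-D+p+q$ exists. If $\{p,q\}$ is not conjugation-invariant, replace $E$ by a real effective $E''$ in the enlarged real class $K-D+p+q+\bar p+\bar q$, which still has sections since adjoining effective points cannot decrease $h^0$. Riemann--Roch also produces a real $D'\sim D$ dominating the subtracted points (the relevant real linear system is non-empty), so the divisor $F=D'+E-(p+q)$ (respectively its four-point analogue $D'+E''-(p+q+\bar p+\bar q)$) is real, effective, and linearly equivalent to $K$, hence equals the zero divisor of a real holomorphic form $\omega$.

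Now invoke the parity observation underlying Lemma \ref{lem-ns}: on each real branch $K_j$, the restriction of the zero divisor of $\omega$ to $K_j$ has even total multiplicity. Since linearly equivalent real divisors have equal parities on each real branch (the divisor of any real meromorphic function restricts to an even divisor on each real branch, as the function must change sign an even number of times around the circle), $D'|_{K_j}\equiv D|_{K_j}\equiv 1\pmod 2$. A short case analysis on the type of $\{p,q\}$---real on the same or different branches, a complex-conjugate pair, or non-real and non-conjugate---then forces the parity $(E|_{K_j}\bmod 2)$ to equal $1$ on all but at most two of the $l$ real branches, so $\sum_j(E|_{K_j}\bmod 2)\ge l-2$. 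This exceeds $\deg E=l-4$ in the conjugation-invariant case and $\deg E''=l-2$ in the other case (where in fact the sum equals $l$), yielding the required contradiction. Hence $D$ is very ample and $A=\iota_D(S)$ is smooth of degree $l+2$.

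For the final stage, any real hyperplane $H\subset\rp^3$ cuts $A$ in a real divisor from $|D|$, whose restriction to each real branch $K_j$ has the same odd parity as $D|_{K_j}$. Thus every real branch of $A$ meets every real hyperplane in an odd number of real points, so it represents the non-trivial class in $H_1(\rp^3;\Z/2)$, i.e.\ is non-contractible. Combined with the irreducibility of $A$ (inherited from $S$) and the degree $l+2$ established above, this shows that $A$ is an irreducible algebraic Hopf link. The main obstacle is the very-ampleness step: for $l\ge 4$ the degree $l+2$ falls below the classical very-ampleness bound $2g+1=2l-1$, so the real $M$-curve hypothesis and the odd-parity condition on $D$---exploited via the Lemma \ref{lem-ns}-style parity argument---are essential.
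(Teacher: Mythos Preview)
Your argument is essentially correct and far more self-contained than the paper's proof, which simply cites the $r=3$ case of Corollary~2.8 in \cite{Huisman}. There is one step that is not justified as written: in the four-point case (when $\{p,q\}$ is not $\conj$-invariant) you claim that a real $D'\sim D$ with $D'\ge p+q+\bar p+\bar q$ exists, but Riemann--Roch only gives $h^0(D-p-q-\bar p-\bar q)\ge h^0(D)-4=0$, not $\ge 1$. Fortunately this step is unnecessary. You already observe that real linear equivalence preserves the parity of the restriction to each real branch; hence \emph{any} real divisor linearly equivalent to $K$ (effective or not) has even degree on every $K_j$ (for $g\ge 1$ compare with the zero divisor of a real holomorphic form; for $g=0$ the vanishing $h^0(K-D+p+q)=0$ is immediate by degree). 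Apply this directly to the real divisor $E+D-(p+q)$ in the $\conj$-invariant case, or to $E''+D-(p+q+\bar p+\bar q)$ otherwise, and the same parity count produces the contradiction without ever constructing $D'$ or an effective canonical representative.

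As for the comparison: the paper outsources the work to Huisman, but an earlier draft argument (still visible in a commented-out block) proceeds geometrically. After obtaining $r(D)=3$ from Lemma~\ref{lem-ns} exactly as you do, it rules out singularities of $A=\iota_D(S)$ by B\'ezout: an osculating plane at a unibranch singular point meets that branch to order $\ge 4$; a common tangent plane at a real crossing of two branches meets each to order $\ge 2$; a real plane through a conjugate pair of singular points picks up $\ge 4$ imaginary intersections. In each case the remaining real branches still contribute at least one point apiece by the odd-parity hypothesis, overshooting the degree $l+2$. Your cohomological route via the very-ampleness criterion is more systematic and transportable (and is presumably close to Huisman's own proof); the B\'ezout route is quicker and avoids the case split on $\{p,q\}$, at the cost of being tailored to the target $\pp^3$.
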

\begin{proof}
The proposition is the special case of
Corollary 2.8 of \cite{Huisman} for $r=3$.
\ignore{
Since $r(K-D)=-1$ by Lemma \ref{lem-ns}, the Riemann-Roch formula
\eqref{RR} implies that $r(D)=3$.
Suppose that $A\subset\pp^3$ is singular. 

If the singularity comes from a single real branch of $A$
then the local intersection number
of $A$ and the osculating plane at this singularity
(i.e. the tangent plane with the highest index of tangency)
is at least 4. As this plane also must intersect all other
real branches
of $A$ by the parity reason,
we get a contradiction with the Bezout theorem.

If the singularity is real, but comes as an intersection of
two distinct real branches then we may consider
a plane tangent to both branches at this singularity.
This plane has the intersection number of at least three 
with the adjacent components of $\R S$ and at least one
with all the other components.
We get a contradiction with the Bezout theorem once again.
Finally, if $A$ has a complex conjugate pair of singularities
we get a contradiction with the Bezout theorem by consideration
of a real plane passing through this pair.
}
\end{proof}

\begin{coro}
Irreducible algebraic Hopf links
of the same chirality are rigidly isotopic.
\end{coro}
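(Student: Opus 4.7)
The plan is to use Proposition \ref{prop-iHl} to identify the set of irreducible algebraic Hopf links with a moduli space of pairs $(S, [D])$, show that this moduli space is path-connected in each chirality component, and promote such a moduli path to a rigid isotopy in $\rp^3$. By that proposition, every irreducible algebraic Hopf link of degree $l+2$ is, up to real projective equivalence, the image $\iota_D(S)$ of some real $M$-curve $S$ of genus $g=l-1$ together with an admissible real divisor class $[D] \in \Pic^{l+2}(S)^{\conj}$ — here ``admissible'' means each real branch of $S$ meets any effective representative of $D$ in an odd number of points. So it suffices to connect the moduli data and then lift through the connected group $PGL_4^+(\R)$.

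I would establish the moduli connectedness in two substeps. First, for a fixed real $M$-curve $S$, the real locus $\Pic^{l+2}(S)^{\conj}$ decomposes as a disjoint union of real tori indexed by parity vectors in $(\Z/2)^{g+1}$, subject to a single global parity constraint from the degree; admissibility selects exactly the component labeled by the all-ones vector, which is a connected torus of real dimension $g$. Second, the moduli space of real $M$-curves of fixed genus $g$ is path-connected by a classical theorem of Natanzon. Combining these, the moduli of admissible pairs $(S, [D])$ is path-connected.

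Given two same-chirality algebraic Hopf links $L_0$ and $L_1$, I would connect their moduli by a path $(S_t, [D_t])$, yielding via $\iota_{D_t}$ a family of curves in $\pp^3$ determined up to $PGL_4(\R)$-action. Connectedness of $PGL_4^+(\R)$ together with the fixed chirality allow this to be lifted to a genuine family $L_t \subset \rp^3$ of embedded links; Proposition \ref{prop-iHl} applied pointwise ensures that each $L_t$ remains a smooth irreducible algebraic Hopf link of degree $l+2$, so $(L_t)$ is the desired rigid isotopy. The hardest step is this final lifting — showing that the evaluation map from framed moduli data to the space of real algebraic subvarieties of $\rp^3$ of fixed degree is a locally trivial fibration over the chirality component — but it is essentially automatic given the smoothness of the relevant parameter spaces (guaranteed by non-specialness of $D$ via Lemma \ref{lem-ns}) and the uniform applicability of Proposition \ref{prop-iHl} along the deformation.
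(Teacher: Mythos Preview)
Your proposal is correct and follows essentially the same strategy as the paper: parametrize irreducible algebraic Hopf links by pairs (real $M$-curve, admissible real divisor), argue that this parameter space is connected, invoke Proposition~\ref{prop-iHl} to guarantee that every point of the path yields a smooth algebraic Hopf link, and resolve the remaining $PGL_4(\R)$ ambiguity via chirality. The only difference is packaging: the paper works directly with effective divisors (one point per real branch plus a conjugate pair) and the elementary observation that the quotient of an $M$-curve by $\conj$ is a disk with $g$ holes, whereas you phrase the same connectedness via the parity decomposition of $\Pic^{l+2}(S)^{\conj}$ and Natanzon's theorem.
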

\begin{proof}
Let $L,L'\subset\rp^3$ be two irreducible algebraic Hopf links
represented as $\iota_D(S)$ and $\iota_{D'}(S')$
for real
Riemann surfaces $S$ and $S'$ and real divisor
$D$ and $D'$ on them.
By Proposition \ref{HMW} the Riemann surfaces $S$ and $S'$
are $M$-curves. Therefore their quotients by the complex conjugation
are diffeomorphic to a disk with $g$ holes.
Thus $S$ and $S'$ can be deformed into each other through
a family of real curves.

Without loss of generality (by taking an appropriate real plane section
of the corresponding irreducible algebraic Hopf links)
we may assume that the real divisors $D$ and $D'$ contain
a single point on each real branch of $S$ and $S'$ as well as
a pair of complex conjugate points.
Thus we may enhance the deformation of $S$ to $S'$ with 
a deformation of $D$ into $D'$ in the space of real divisors
on real Riemann surfaces.
By Proposition \ref{prop-iHl} this deformation corresponds
to a deformation in the class of irreducible algebraic Hopf links,
i.e. a rigid isotopy.

Therefore, $L$ and $L'$ are rigidly isotopic up to a projective
equivalence. 
Since the group $PGL_4(\R)$ consists of two connected components,
$L$ and $L'$ are rigidly isotopic if and only if their invariant
$w_\lambda$ (which cannot vanish for a $MW_\lambda$-link) 
is of the same sign.
\end{proof}

\begin{rmk}
Note that 
Conjecture 3.4 of \cite{Huisman} 
is false.
Not only algebraic Hopf links, but all 
$MW_\lambda$-links are unramified in the sense
of \cite{Huisman} by the condition $(iii)^t$
of \cite{MO-mw}. Taking a real algebraic
link $L\subset\rp^3$ 
isotopic $W_g(a_0,\dots,a_g)$
with even $a_j$
we get an unramified curve whose real branches are
contractible in $\pp^3$. 
Existence of such link is ensured by Theorem 3
of \cite{MO-mw}.
\end{rmk}

\section{Nodal links}
Recall that a nodal projective curve $A\subset\pp^n$,
$n\ge 2$,
is a (complex) algebraic curve such that all of its
singularities are simple nodes, i.e. points of 
crossings of two non-singular local
branches with distinct
tangent lines. As before, $A$ is real if it is 
invariant with respect to the involution of complex
conjugation $\pp^n\to\pp^n$.
The real locus $\R A=\rp^n\cap A$
of a nodal curve is a disjoint
union of immersed circles and a finite
set $\R E\subset\R A$. The points of
$\R E$ are called
{\em solitary nodes of $\R A$}.
\begin{defn}
An infinite subset $L\subset\rp^3$
is called
an irreducible {\em nodal real algebraic link}
if there exists an irreducible
nodal real algebraic curve $A\subset\rp^3$
such that $L=\R A$, and 
$L$ is not contained in any
proper projective subspace of $\rp^3$.
In particular, $L\neq\emptyset$.
Two nodal real algebraic links are {\em rigidly isotopic}
if they can be connected with a 1-parametric isotopy
in the class of real algebraic links of the same degree.

A {\em node} of $L$ is a node of the curve $A$.
\end{defn}
Nodal real algebraic links provide a generalization
of real algebraic links.
As in the case of real algebraic links,
an irreducible nodal real algebraic link
$L\subset\rp^3$
uniquely determines $A\subset\pp^3$.
\begin{defn}
We say that a real algebraic link $L_1$
{\em degenerates
to a real nodal algebraic link} $L_0$
if there exists a continuous family
of real algebraic links 
$L_t\subset\rp^3$, $t\in[0,1]$, of constant degree
such that $L_t$ for $t>0$ is a (non-nodal)
real algebraic link.
In this case we also say that the nodal link
$L_0$ {\em can be perturbed} to the smooth link $L_1$.
The perturbation (resp. degeneration) is
called {\em equigeneric} if the genus 
stays constant.

A smooth perturbation of a nodal curve is  equigeneric
if each node is perturbed 
as in Figure \ref{figPerturb} (left) but
not as in Figure \ref{figPerturb} (right).
All degenerations and perturbations considered
in this paper are assumed to be equigeneric.
\end{defn}

\begin{figure}[h]
\includegraphics[height=12mm]{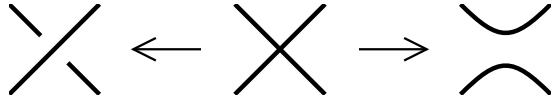}
\caption{
Equigeneric (on the left) and non-equigeneric
(on the right) perturbations of a spatial
nodal curve.
\label{figPerturb}}
\end{figure}

\ignore{
\begin{lem}
For a real algebraic link $A\subset\pp^3$,
and generic point $p\in\rp^3\setminus A$ 
the curve $\pi_p(A)\subset\pp^2$ is nodal.
\end{lem}
\begin{proof}
The union of all lines tangent to $A$ is a surface,
and thus nowhere dense in $\rp^3$.
Thus the 
\end{proof}
}
\begin{prop}\label{prop-mdeg}
Any irreducible real algebraic link 
$L=\R A\subset\rp^3$ of genus $g$ and degree $d$
degenerates to a nodal link with at least
$d-g-3$ nodes.
\end{prop}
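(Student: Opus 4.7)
The plan is to realize the smooth curve $A \subset \pp^3$ as a linear projection of the complete-linear-system image of $\tilde A$ in a larger projective space, and then to deform the center of the projection so that it simultaneously meets many chords of the larger image, creating many nodes at once.

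Let $D$ be a hyperplane section divisor on the normalization $\tilde A$, so $\deg D = d$, and set $r := r(D) = h^0(D) - 1$. By the Riemann--Roch theorem one has $r \ge d - g$, so it suffices to produce an equigeneric real degeneration of $A$ to a nodal link with $r - 3$ nodes. The four-dimensional subsystem $V \subset H^0(D)$ cutting out the given embedding $A \subset \pp^3$ separates points and tangent vectors; this forces the complete system $|D|$ itself to be very ample, and it defines a real embedding $\bar A \hookrightarrow \pp^r$ of the same degree $d$ and genus $g$, with $A = \pi_{\Lambda_0}(\bar A)$ for some real $(r-4)$-dimensional linear subspace $\Lambda_0 \subset \pp^r$, which is disjoint from $\bar A$ and from its three-dimensional secant variety $\Sigma := \operatorname{Sec}(\bar A)$ since $A$ is smooth.

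Next I construct a real $(r-4)$-plane $\Lambda^* \subset \pp^r$ whose projection produces $r - 3$ nodes. Pick $r - 3$ chords $C_1, \dots, C_{r-3}$ of $\bar A$, each either real or grouped with another in a complex conjugate pair, together with a real interior point $x_i \in C_i$ (conjugation-compatibly on conjugate chords), and set $\Lambda^* := \langle x_1, \dots, x_{r-3}\rangle$. A standard incidence computation in the Grassmannian, together with genericity of the choice of chords and points, ensures that $\Lambda^*$ has real dimension $r - 4$, is disjoint from $\bar A$, meets $\Sigma$ transversely at exactly the $r - 3$ points $x_i$ with each $x_i$ lying on a unique chord, and that $\pi_{\Lambda^*}$ has no tangencies or triple points. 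Then $A^* := \pi_{\Lambda^*}(\bar A) \subset \pp^3$ is an irreducible real nodal algebraic link of degree $d$ and geometric genus $g$ with precisely $r - 3 \ge d - g - 3$ nodes, appearing either as real nodes or in complex conjugate pairs.

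What remains is to join $\Lambda_0$ and $\Lambda^*$ by a continuous real arc $\Lambda_t$ in the Grassmannian $G(r-3, r+1)_\R$ such that $\pi_{\Lambda_t}(\bar A)$ is smooth for every $t$ in the open half-interval before the endpoint corresponding to $\Lambda^*$; the family is then automatically equigeneric (each projection is birational on $\bar A$) and of constant degree $d$, so it realizes the desired degeneration. The main obstacle is reality: the discriminant $\Delta \subset G(r-3, r+1)$ of $(r-4)$-planes meeting $\Sigma$ is a complex divisor with complex-connected complement, but its real trace $\Delta_\R$ can be a real hypersurface that disconnects the real Grassmannian. To handle this, I observe that near $\Lambda^*$ the variety $\Delta$ consists of $r - 3$ smooth real branches meeting transversely (one per chord $C_i$), so locally $G_\R \setminus \Delta_\R$ decomposes into $2^{r-3}$ pieces, corresponding to the different real smoothings of the $r - 3$ nodes. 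Using the freedom to vary the chords and the interior points $x_i$, one can arrange for $\Lambda^*$ to lie in the closure of the same global component of $G_\R \setminus \Delta_\R$ as $\Lambda_0$ -- for instance by choosing the $x_i$ so that $\Lambda^*$ is close to $\Lambda_0$ in $G_\R$, which still permits $r - 3$ nodes to appear. A real path inside that component terminating at $\Lambda^*$ then provides the required degeneration.
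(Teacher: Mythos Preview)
Your strategy---realizing $A$ as a projection of the complete-linear-system image $\bar A\subset\pp^r$ and then moving the center of projection onto $r-3$ chords at once---is genuinely different from the paper's. The paper proceeds \emph{inductively}: given a nodal curve $A$ with $n<d-g-3$ nodes, it fixes a planar projection $B=\pi_p(A)$ possessing a real node $u=\pi_p(x)=\pi_p(y)$ not coming from $A$, writes $A$ via sections $s_0,s_1,s_2,s_3\in\Gamma(S,D)$ with $s_0,s_1,s_2$ defining $B$, and uses $r(D)-n>3$ to find an extra section and produce $s'_3,s'_4$ with $s'_3(\tilde x)=0=s'_4(\tilde y)$, $s'_3(\tilde y)\neq 0\neq s'_4(\tilde x)$. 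The one-parameter family $[s_0:s_1:s_2:s'_3+ts'_4]$ has no node over $u$ at $t=0$ but does at some real $t_u$, so somewhere along this explicit real line a new node must appear. The point is that existence of the node-creating deformation is forced by an intermediate-value argument on a single real parameter; no global statement about the topology of a real parameter space is needed.

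Your argument, by contrast, hinges on exactly such a statement, and this is where it breaks. You correctly identify that the real discriminant $\Delta_\R$ may disconnect the real Grassmannian, so that a real path from $\Lambda_0$ to $\Lambda^*$ avoiding $\Delta_\R$ until the endpoint need not exist. Your proposed remedy---choosing the chord points $x_i$ so that $\Lambda^*=\langle x_1,\dots,x_{r-3}\rangle$ is close to $\Lambda_0$---is self-defeating: $\Lambda_0$ is disjoint from the closed secant variety $\Sigma$, so every $(r-4)$-plane sufficiently close to $\Lambda_0$ in $G_\R$ is likewise disjoint from $\Sigma$ and therefore cannot meet any chord, let alone $r-3$ of them. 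More broadly, nothing in your outline gives a mechanism for placing $\Lambda^*$ in the closure of the connected component of $G_\R\setminus\Delta_\R$ containing $\Lambda_0$; the local picture near $\Lambda^*$ (the $2^{r-3}$ chambers) tells you which local smoothings exist, but says nothing about which global chamber $\Lambda_0$ lies in. Without this, the family $\pi_{\Lambda_t}(\bar A)$ may acquire and shed nodes along the way, and you have not exhibited $\pi_{\Lambda^*}(\bar A)$ as a degeneration of $A$. The paper's one-node-at-a-time argument is designed precisely to avoid this obstruction.
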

\begin{proof}
By induction it is enough to assume that if $L=\R A$ is a nodal link
of genus $g$ and degree $d$ with $n<d-g-3$ nodes, then it
degenerates to a nodal link with at least one node more.

Let $p\in\rp^3$ be a generic point so that the image
$B=\pi_p(A)\subset\pp^2$
under the projection $\pi_p:\pp^3\setminus\{p\}$
is a real nodal planar curve.
By choosing $p$ near a line passing through two distinct
real points of $A$, we may assume that $B$ has a real node with
two real branches which is not a projection of a node of $A$.
This means that there are distinct points $x,y\in\R A$ such that
$$
    u=\pi_p(x)=\pi_p(y).
$$
We choose the coordinates so that $p=(0:0:0:1)$.

Let $\iota:S\to A$ be the normalization and
$D=\iota^{-1}(A\cap H)$ be the divisor on $S$
obtained as the intersection of $A$
and a generic real plane $H\subset\pp^3$.
The divisor $D$ is a real divisor of rank $r\ge d-g$.
The immersions of $S$ given by $A\subset\pp^3$ and
$B\subset\pp^2$ correspond to
a real projective 3-dimensional subspace of $|D|$
and a real projective plane inside it.

Consider the vector space $\Gamma(S,D)$ formed
by the sections
of the line bundle defined by $D$.
In the coordinates the curve $A\subset\pp^3$
is given by four linearly independent sections
$s_0,s_1,s_2,s_3\in\Gamma(S,D)$ such that
$s_0,s_1,s_2$ define the curve $B\subset\pp^2$.
The condition that $s_0,s_1,s_2,s$ define an immersion
with $n$ nodes
imposes $n$ linear conditions on $s\in\Gamma(S,D)$. 
Let $V\subset\Gamma(S,D)$ be the
space of all sections satisfying these conditions, in particular, we have
$s_3\in V$. Since $r(D)-n>3$, it follows that
there exists $s_4\in V$ not
contained in the linear span of $s_0,s_1,s_2,s_3$.
Taking $s_4$ close to $s_3$ we may assume that
$s_0,s_1,s_2,s_4$ define a curve $A'\subset\pp^3$
without a node at $\pi_p^{-1}(u)$.

With the help of the plane sections of
the curves $A,A'$
passing through one point of intersection
with $\pi_p^{-1}(u)$ but not the other
we can choose $s'_j\in\Gamma(D,S)$, $j=3,4$,
with the following properties:
\begin{enumerate}
\item
$s_j$ is the linear combination of $s_0,s_1,s_2,s'_j$
(in particular, the curve defined by $s_0,s_1,s_2,s'_3$
is projectively equivalent to $A$);
\item 
$s'_3(\tilde x)=0=s'_4(\tilde y)$;
\item
$s'_3(\tilde y)\neq 0\neq s'_4(\tilde x)$.
\end{enumerate}
Here $\tilde x=\iota^{-1}(x),\tilde y=\iota^{-1}(y)\in S$.
Note that (1) implies
linear independence of $s_0,s_1,s_2,s'_3,s'_4$.

\ignore{
Let $p\in\rp^3$ be a generic point so that
the image
$B=\pi_p(A)\subset\pp^2$ 
under the projection
$\pi_p:\pp^3\setminus\{p\}\to\pp^2$
is a real nodal planar curve.
By the adjunction formula
$B$ has $N_d-g\ge d-g-3$ nodes.
We choose the coordinates so that 
$p=(0:0:0:1)$.

Let $D=A\cap H$ be the divisor on $S=A$
obtained as the intersection of $A$
and a generic real plane $H\subset\pp^3$.
The divisor $D$ is a real divisor
of rank
$r\ge d-g$.
The embedding $A\subset\pp^3$ and the immersion
$B\subset\pp^2$ correspond to 
a real projective 3-dimensional subspace of $|D|$
and a real projective plane inside it.

Consider the vector space $\Gamma(S,D)$
formed by sections of the line bundle defined by $D$.
In coordinates the embedding $A\subset\pp^3$
is given by four linearly independent sections
$s_0,s_1,s_2,s_3\in\Gamma(S,D)$
such that
$s_0,s_1,s_2$ define the immersion $B\subset\pp^2$.
If $r(D)>3$ then 
there exists $s_4\in\Gamma(S,D)$ not
contained in the linear span of $s_0,s_1,s_2,s_3$.
Adding a multiple of $s_3$ to $s_4$
if needed we may assume that $s_4(x)=0$
while $s_4(y)\neq0$
for $x\neq y\in A$ such that
\[
u=\pi_p(x)=\pi_p(y)\in\R B
\]
is a real non-solitary node of $\pi_p(A)$
if the number of real non-solitary
nodes of $\pi_p(A)$ is larger
than the number of  nodes of $A$
(in the beginning, the curve $A$ is smooth,
and does not have nodes). 
}
The image of $S$ in $\pp^3$ under
$[s_0:s_1:s_2:s'_3+ts'_4]$, $t\in\R$,
is a real curve $A_t\subset\pp^3$ with $A_0=A$.
Since the first three sections define
a nodal immersion to $\pp^2$, all singularities
of $A_t\subset\pp^3$ (if any) are nodes.
By linear independence of $s_0,s_1,s_2,s_3,s_4$,
the curve $A_t$ cannot be contained
in a plane in $\pp^3$.

By (2) and (3) there exists $t_u\in\R$ such that
$A_{t_u}$ has a node over $u$ while $A_0$ does
not have a node over $u$. Thus $A$ can be
degenerated to a curve with at least one
node (though not necessarily over $u$ since
another node may appear during the deformation
of $t$ from 0 to $t_u$, in particular the
new node could be solitary, or even a pair
of complex conjugate nodes).
\ignore{
Inductively, if we have already constructed
a degeneration of $A$ to a nodal real algebraic
curve $A_N\subset\pp^3$ with $\pi(A_N)=B$
then in coordinates the projective curve
$A_N$ is given by $s_0,s_1,s_2,s_{3,N}\in\Gamma(S,D)$.
Here $s_{3,N}$ satisfies to the nodal requirement:
it takes the same values on
the inverse images of any node of $A_N$.
This imposes $n$
linear conditions in $\Gamma(S,D)$.
If $n<d-g-3$ then there exists a section
$s_{4,N}$ in $\Gamma(S,D)$
satisfying to the nodal requirement, but not
in the linear span of $s_0,s_1,s_2,s_N$.
If there exist a real non-solitary
node of $B$ not corresponding
to a node of $A$
then, by proceeding as above, we get a degeneration
to a nodal curve with at least $n+1$ nodes.

We are left with the case when $B$ has no
non-solitary real node not coming from 
a node of $A_N$.
To repair the situation we change the projection $B$
by choosing a different projection point
$p\in\rp^3$ near a line passing through two distinct
real points of $A_N$.
\ignore{
The curve $A\subset\pp^3$ is defined
 the image by a 3-dimensional projective
subspace in $\pp_D$
and thus can be obtained
through a composition of $\iota_D$ with the
projection
\[
\pi_{\Pi}:\pp_D\setminus\Pi\to\pp^3
\]
from a projective subspace $\Pi\subset\pp_D$
of dimension $d-g-4$ such that
$\Pi\cap\iota_D(S)=\emptyset$.

Let $\chord(S)\subset\pp_D$ be the variety
of chords of $\iota_D(S)$, i.e. the closure of 
the union of
all lines passing through at least two distinct
points of $\iota_D(S)$.
Let $\tang(S)\subset\pp_D$
be the tangent surface of $S$, i.e. the union
of tangent lines to $\iota_D(S)$.
Clearly, we have $\tang(S)\subset\chord(S)$ while
$\dim(\chord(S))=3$,
$\dim(\tang(S))=2$.

We may obtain degenerations $L_t$, $t\in[0,t]$
through 
deformations of $\Pi_t$ via
$\pi_{\Pi_t}\circ\iota_D(S)$. The
curve $\pi_{\Pi_t}(\iota_D(S))$ is
immersed if and only if $\Pi_t\cap\tang(S)=\emptyset$ and
embedded if and only if
$\Pi_t\cap\chord(S)=\emptyset$.
The
curve $\pi_{\Pi_t}(\iota_D(S))$ is
nodal if it is immersed and the projective
subspace span by .. $\Pi_t\cap\iota_D(S)$

$L_t\subset\rp^3$
obtained from $\R \pi_{\Pi_t}(\iota_D(S))$
after removing all solitary real singularities
is a (smooth) real algebraic link if  
}
}
\end{proof}

\section{Nodal $MW_\lambda$-links and chord diagrams}
\begin{defn}\label{def-nMW}
An irreducible nodal algebraic link is called
a nodal $MW_\lambda$-link if it can be (equigenerically) perturbed
to a (smooth) $MW_\lambda$-link.
\end{defn}
\begin{lem}\label{lem-nodes}
All nodes of a nodal $MW_\lambda$-link are real and
non-solitary.
\end{lem}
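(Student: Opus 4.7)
The plan is to use an equigeneric perturbation $\{A_t\}_{t\in[0,\epsilon)}$ of the nodal $A_0$ to smooth $MW_\lambda$-links $A_t$ ($t>0$) and to exploit the maximality $|w_\lambda(A_t)|=N_d-g$. I would fix a generic real projection point $p\in\rp^3$ so that $\pi_p(A_t)\subset\pp^2$ is a nodal plane curve of degree $d$ and geometric genus $g$ with exactly $N_d-g$ transverse double points varying continuously with $t$; for small $t$ the type of each node (real vs.\ complex, solitary vs.\ non-solitary) is preserved.

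For the no-complex-nodes part, I would observe that $w_\lambda(A_t)$ sums $\pm 1$ over real nodes of $\pi_p(A_t)$ and $0$ over complex ones, so the maximality for $t>0$ forces every node of $\pi_p(A_t)$ to be real. By continuity this propagates to $\pi_p(A_0)$, and since a generic projection sends each node of $A_0$ to a node of $\pi_p(A_0)$ of the same type, no node of $A_0$ can be complex.

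The harder part is ruling out a solitary real node $q\in A_0$. The plan here is to project from $q$ rather than from $p$: since $q$ is a double point of $A_0$, the image $C:=\overline{\pi_q(A_0\setminus\{q\})}\subset\pp^2$ is an irreducible real plane curve of degree $d-2$. Next I would invoke condition $(iii^t)$ of Theorem~2 of \cite{MO-mw}: every real plane intersects the smooth $MW_\lambda$-link $\R A_t$ in at least $d-2$ real points. For a real plane $P$ through $q$, the intersection $P\cap A_t$ (for $t>0$ small) contains a complex conjugate pair of points near $q$---the transverse intersections of $P$ with the two complex conjugate branches of $A_t$ that resolved the solitary node---so the remaining $d-2$ intersections must all be real. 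Pushing forward via $\pi_q$ and letting $t\to 0$, this says every real line in $\rp^2$ meets $\R C$ in $d-2$ real points; that is, $C$ is totally real.

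The contradiction comes from the classical fact that no irreducible real plane curve of degree $n\ge 2$ can be totally real: by Harnack's inequality and the oval/pseudo-line structure of $\R C$ in $\rp^2$, one can always exhibit a real line missing enough ovals to have strictly fewer than $n$ real intersections. Hence $d-2\le 1$, so $d\le 3$, but no irreducible non-planar nodal curve in $\pp^3$ has degree $\le 3$. The main obstacle in this plan is to apply condition $(iii^t)$ in the requisite generality---the excerpt spells it out explicitly only for the Hopf-link case---and to verify the totally-real step carefully in the presence of possible singularities of $C$; both points I expect to be routine given the results of \cite{MO-mw} but should be checked.
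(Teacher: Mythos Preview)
Your treatment of the non-real node case is essentially the paper's argument: a non-real node of $A_0$ forces a non-real node of $\pi_p(A_t)$ for small $t>0$, so the planar curve has fewer than $N_d-g$ real nodes and $|w_\lambda(A_t)|<N_d-g$.

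For the solitary-node case your route is genuinely different. The paper does not project from $q$; it picks a generic point $p$ on a real line $\ell$ through a pair of complex conjugate points of $A_0$ and projects from there. Then $\pi_p(A_\epsilon)$ acquires \emph{two} solitary real nodes---one from the solitary node $\nu$ of $A_0$, one from the chord $\ell$---and this directly contradicts condition~$(iv^a)$ of \cite[Theorem~2]{MO-mw}. That is the whole argument.

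Your approach (project from $q$, use $(iii^t)$ to conclude that $C=\overline{\pi_q(A_0\setminus\{q\})}$ of degree $d-2$ is totally real, then derive a contradiction) is correct in outline, but the justification you sketch for the last step is the weak point. The Harnack/oval argument (``exhibit a real line missing enough ovals'') is vague and does not obviously go through: a line avoiding one oval can still meet the remaining components in $\deg C$ points. The clean proof that no irreducible real plane curve of degree $\ge 2$ is totally real is immediate: choose any $z\in C\setminus\rp^2$; the real line through $z$ and $\bar z$ meets $C$ at the non-real points $z,\bar z$. Note that this is precisely the paper's manoeuvre (pass a real line through a conjugate pair on the curve), only applied to $C$ rather than to $A_0$. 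So your detour through $\pi_q$ and $(iii^t)$ ultimately lands on the same idea with extra scaffolding; the paper's choice of projection centre together with $(iv^a)$ is the shorter path.
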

\begin{proof}
Let $L_0$ be a nodal $MW_\lambda$-link and $L_t$,
$t\in (0,1]$
be smooth $MW_\lambda$-links degenerating to $L_0$.
If $L_0$ has a non-real node $\nu$ then the planar
curve $\pi_p(L_\epsilon)\subset\pp^2$
must have a non-real node near $\nu$ for small $\epsilon>0$.
Thus $L_\epsilon$ can not be
maximally writhed.

Suppose that $L_0=\R A_0$
has a solitary real node $\nu$.
Let $p\in\rp^3\setminus\R A_0$ be a point on
a line $\ell$ connecting a pair of complex conjugate
points of $A_0$.
The projection $\pi_p(A_{\epsilon})\subset\pp^2$
has two solitary nodal points: one corresponding
to $\nu$, and one corresponding to $\ell$.
We get a contradiction to Condition $(iv^a)$ of
\cite[Theorem 2]{MO-mw}.
\end{proof}

Many of the properties of $MW_\lambda$-links 
found in \cite{MO-mw} hold also for any
nodal $MW_\lambda$-link $L_0=\R A_0\subset\rp^3$.
\ignore{
\begin{lem}
For any $p\in \rp^3\setminus L_0$ there exists
a line $\ell\in\rp^3$, $\ell\ni p$, such that
$L_0$ is hyperbolic with respect to $\ell$.
\end{lem}
\begin{proof}
Note that it suffices to prove the lemma for smooth
$MW_\lambda$-links $L_t\subset\rp^3$, $t\in (0,1]$,
as reality of intersection 
points is conserved under taking the limit. 
By \cite[Lemma 3.10]{MO-mw} the real tangent surface
\[
\R T(L_t)=\bigcup\limits_{q\in L_t} \R T_q,
\]
where $\R T_q$ is the tangent line to $L_t$ at $q$,
is a union of topologically embedded hyperboloids.
If $p\in\R T_q$ then the lemma follows
from \cite[Theorem 2(ii)]{MO-mw} (for $\ell=\R T_q$).
If $p\in\rp^3\setminus \R T(L_t)$ then the lemma
follows from \cite[Lemma 3.11]{MO-mw}
(cf. \cite[Figure 3]{MO-mw}).
\end{proof}
}

Let $q\in A_0\setminus\R A_0$. Denote with
$\ell_{q\bar q}\subset\rp^3$ the line whose complexification
in $\pp^3$ passes through $q$ (and thus also through $\bar q$). 
\begin{lem}\label{lemqq}
For any $q\in A_0\setminus\R A_0$ we have
$\ell_{q\bar q}\cap L_0=\emptyset$.
\end{lem}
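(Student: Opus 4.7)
My plan is to argue by contradiction using a projection argument, in the spirit of Lemma \ref{lem-nodes}. Assume there exists $r \in \ell_{q\bar q} \cap L_0$. I will choose a generic real point $p \in \ell_{q\bar q} \cap \rp^3$ distinct from $q$, $\bar q$, $r$ and disjoint from $A_0$. Because $p$, $q$, $\bar q$, $r$ are collinear, $\pi_p$ identifies $q$, $\bar q$ and $r$ to a single point of $\pp^2$, so $\pi_p(A_0)$ has a multiple point there with at least three local branches: a complex conjugate pair coming from $q,\bar q$ together with the real branch (or branches) coming from $r$. By Lemma \ref{lem-nodes} the point $r$ is either a smooth point of $A_0$ (one real branch) or a non-solitary real node (two real branches).

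Let $A_\epsilon$ be a smooth $MW_\lambda$-perturbation as guaranteed by Definition \ref{def-nMW}. For small $\epsilon>0$, the curve $A_0$ perturbs near $q,\bar q$ to smooth local branches of $A_\epsilon$ through a conjugate pair $q_\epsilon,\bar q_\epsilon$, and near $r$ perturbs to one real smooth arc of $A_\epsilon$ (resp.\ two disjoint real arcs if $r$ is a node). For generic $p$, the corresponding local branches of $\pi_p(A_\epsilon)$ near $\pi_p(q)$ have pairwise distinct tangent directions and meet transversely in pairs. Crucially, the intersection of the $q_\epsilon$-branch with any real $r$-branch yields a complex node of $\pi_p(A_\epsilon)$, whose complex conjugate is the corresponding intersection on the $\bar q_\epsilon$-branch; together these form a complex conjugate pair of nodes of $\pi_p(A_\epsilon)$.

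The main step is then to derive a contradiction from such a pair. Since $L_\epsilon$ is a smooth $MW_\lambda$-link of genus $g$ and degree $d$, the plane curve $\pi_p(A_\epsilon)$ is nodal with exactly $N_d-g$ nodes (by the genus formula), while by hypothesis $|w_\lambda(L_\epsilon)|=N_d-g$. Each real node contributes $\pm 1$ to $w_\lambda$, whereas complex nodes do not contribute at all, so this maximality forces every one of the $N_d-g$ nodes to be real and to contribute with a consistent sign. The complex conjugate pair produced above therefore yields a contradiction, proving the lemma.

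The principal technical obstacle is to ensure that a single $p\in \ell_{q\bar q}\cap\rp^3$ simultaneously makes $\pi_p(A_\epsilon)$ nodal (so $p$ avoids trisecants and the tangent surface of $A_\epsilon$) and makes the three or four local tangent directions at the branches of interest mutually distinct in $\pp^2$. Both are open dense conditions on $p$ for each small $\epsilon$, and a generic choice in $\ell_{q\bar q}\cap\rp^3$ satisfies them for all sufficiently small $\epsilon>0$ by stability under small perturbations of $A_\epsilon$.
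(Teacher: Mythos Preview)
Your argument is correct, but it takes a different route from the paper. The paper gives a two-line proof: since by Lemma~\ref{lem-nodes} the point $r$ is not a solitary node, there is a real tangent plane $H$ to $L_0$ at $r$ containing the line $\ell_{q\bar q}$; this plane then contains the non-real points $q,\bar q\in A_0$, contradicting the property \cite[Theorem~2(ii)]{MO-mw} that every real tangent plane to an $MW_\lambda$-link meets the curve only in real points, a closed condition that passes to nodal limits.

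Your approach instead parallels the proof of Lemma~\ref{lem-nodes}: projecting from $p\in\ell_{q\bar q}$ and perturbing, you produce a complex-conjugate pair of nodes of $\pi_p(A_\epsilon)$ (from the $q_\epsilon$-branch meeting the real $r$-branch) and deduce $|w_\lambda(L_\epsilon)|\le N_d-g-2$, contradicting maximality. This is self-contained in the sense that it avoids invoking the tangent-plane criterion from \cite{MO-mw}, at the cost of a more delicate genericity discussion. One minor point: your claim that a single generic $p$ works ``for all sufficiently small $\epsilon>0$'' is stronger than needed and not obviously true, since the bad locus (tangent surface and trisecant variety of $A_\epsilon$) moves with $\epsilon$; but you only need one pair $(p,\epsilon)$ with $\epsilon$ small and $\pi_p(A_\epsilon)$ nodal, and this follows since for each fixed small $\epsilon$ the bad locus meets the line $\ell_{q\bar q}$ in finitely many points.
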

\begin{proof}
By Lemma \ref{lem-nodes} $L_0$ has no solitary nodes.
Thus at a point of $\ell_{q\bar q}\cap L_0$
there exists a tangent plane $H\subset\rp^3$
containing $\ell$. Existence of such plane contradicts
to the property \cite[Theorem 2 (ii)]{MO-mw} of
$MW_\lambda$-links which is clearly
conserved under passing to the limiting
nodal links.
\end{proof}

\begin{cor}
For each real branch $K$
the linking number 
\begin{equation}
a(K)=2|\lk(\ell_{q\bar q},K)|
\end{equation}
(taken twice)
in $\rp^3$ is independent of $q\in A_0\setminus\R A_0$.
\end{cor}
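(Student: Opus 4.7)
The plan is to combine three observations: (i) the line $\ell_{q\bar q}$ depends continuously on $q\in A_0\setminus\R A_0$ and, by Lemma \ref{lemqq}, is always disjoint from the real branch $K$; (ii) the linking number of two disjoint oriented closed $1$-cycles in $\rp^3$ is locally constant in the pair; (iii) the complement $A_0\setminus\R A_0$ has exactly two connected components, interchanged by complex conjugation. Taken together, (i)--(iii) show that $\lk(\ell_{q\bar q},K)$ is a constant integer on each of the two components and flips sign when one passes to the other, so $|\lk(\ell_{q\bar q},K)|$, and therefore $a(K)$, is independent of $q$.

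The only nontrivial point is (iii). Let $\nu\colon S\to A_0$ be the normalization. By Lemma \ref{lem-nodes}, every node of $A_0$ is real and non-solitary, so $\nu$ restricts to a homeomorphism $S\setminus\R S\to A_0\setminus\R A_0$, and the number of connected components of $\R S$ coincides with the number of real branches of $A_0$. Since $L_0$ is equigenerically perturbed to a smooth $MW_\lambda$-link $L_t$ of genus $g$ with $g+1$ real branches, and an equigeneric perturbation of a spatial non-solitary real node pulls apart two arcs without identifications elsewhere, $A_t$ is diffeomorphic to $S$ as a smooth real curve; hence $S$ has genus $g$ and $g+1$ real components, i.e.\ it is an $M$-curve. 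Consequently $S\setminus\R S$, and therefore $A_0\setminus\R A_0$, has exactly two connected components, interchanged by $\conj$.

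To finish, on one of these two components I would fix a continuous orientation of $\ell_{q\bar q}$ (for instance the one induced by a continuous choice of complex tangent vector to $A_0$ at $q$). The resulting map $q\mapsto\ell_{q\bar q}$ lands in the space of oriented lines in $\rp^3$ disjoint from $K$, so by (ii) the integer $\lk(\ell_{q\bar q},K)$ is constant on that component. Crossing to the other component via $q\leftrightarrow\bar q$ leaves the underlying unoriented line fixed while reversing this induced orientation, so the linking number flips sign. Taking absolute values kills this sign and yields the corollary. The main obstacle is verifying (iii), which in turn rests entirely on Lemma \ref{lem-nodes} and the local model of equigeneric smoothings of real non-solitary spatial nodes.
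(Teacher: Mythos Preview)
Your proof is correct and spells out precisely the continuity-plus-connectedness argument that the paper leaves implicit (the corollary is stated with no proof, as an immediate consequence of Lemma~\ref{lemqq}). One small simplification: you do not actually need the full $M$-curve property in (iii), since for \emph{any} connected real Riemann surface $S$ the quotient $(S\setminus\R S)/\conj$ is connected, and $|\lk(\ell_{q\bar q},K)|$ depends only on the unordered pair $\{q,\bar q\}$---so you may work directly on that connected quotient and bypass both the $M$-curve verification and the orientation discussion in your last paragraph.
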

We have 
\begin{equation}\label{a-comp}
\sum\limits_K a(K)=d-2
\end{equation}
by \cite[equation (3)]{MO-mw}
as the integer numbers $a(K)$ agree with the numbers
introduced in \cite[Lemma 4.12]{MO-mw} under the degeneration
of $L_t$ to $L_0$.
\ignore{
For $p\in \tilde L_0$ we may consider the tangent line
$T_p\subset\rp^3$.
The proof of the following lemma is identical to that of
\cite[Lemma 3.10]{MO-mw}.
\behin{lem}
If $p,q\in \tilde L_0$ with $\nu(p)\neq\nu(q)$ 
then $T_p\cap T_q=\emptyset$.
\end{lem}
We denote with
\[
T=\bigcup\limits_{p\in \tilde L_0}T_p
\]
the tangent surface to $L_0$.

\begin{prop}
For a generic plane $P\subset\rp^3$ 
we have \[\#(K\cap\nu^{-1}(P\cap L_0))\ge s(K).\]
\end{prop}
\begin{proof}
Since $P$ is not tangent to $L_0$ the intersection
$P\cap T$ is ..
\end{proof}
}

When speaking of nodal curves, we shall
use the language of chord diagrams.
\begin{defn}\label{def-cd}
A {\em chord diagram} $(X,\Lambda)$
is a pair consisting of
the disjoint union $\Lambda$
of $l$ oriented circles, and the topological
space $X$ obtained as the result of attachment
of $\delta\ge 0$ intervals (chords) along
their endpoints to $\Lambda$ so that
distinct endpoints are glued to distinct points
of $\Lambda$. We consider $(X,\Lambda)$
up to homeomorphisms of the pair
preserving the orientation of $\Lambda$.
We denote the complement of the chord endpoints
in $\Lambda$ with $\Lambda^\circ\subset\Lambda$.
\end{defn}
	
\begin{defn}\label{planar-la}
A chord diagram $(X,\Lambda)$ is called {\em planar} 
if it can be embedded to the disjoint union $\Delta$ of
$l$ copies of 2-disks so that $\Lambda$ is mapped to
the boundary of the disjoint union.
\end{defn}
Clearly no chord of a planar chord diagram can
connect points from different components of $\Lambda$.
Thus a planar diagram for arbitrary $l$
 is  a disjoint union of $l$ connected
planar chord diagrams.

By a loop $\lambda$ in a chord diagram $(X,\Lambda)$
we mean a simple closed loop in $X$. If  $(X,\Lambda)$
is planar then
we refer to these loops as {\em planar loops}.

\begin{defn}
The chord diagram $X(L_0)$ of a
a nodal link $L_0$ is the chord diagram
obtained by attaching
a chord to $\R\tilde A_0$ connecting
the respective points of the normalization
for every node of $\R A_0$.
\end{defn}
For a loop $\lambda\subset L_0$ the number 
\begin{equation}\label{alambda}
a(\lambda)=2|\lk(\lambda,\ell_{q\bar q})|
\end{equation}
is independent of the choice of $q\in A_0\setminus\R A_0$
by Lemma \ref{lemqq}.

\begin{prop}\label{gd}
The chord diagram of a nodal $MW_\lambda$-link $L_0$
is planar.
\ignore{
Each node of a nodal $MW_\lambda$-link $L_0$ corresponds
to a self-intersection point of the same component
of $L_0$. 

Furthermore if $\mu,\nu$ are two nodes
corresponding to self-intersections of the same
component $K\subset L_0$ then $\nu$ corresponds to
a self-intersection point of the same component
of the normalization of $K\setminus\{\mu\}$.
}
\end{prop}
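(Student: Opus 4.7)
The plan is to establish both conditions of Definition \ref{planar-la} by ruling out two failure modes for $X(L_0)$: (a) a chord joining points of two different circles of $\Lambda$, i.e.\ a node of $L_0$ at which two distinct real branches cross; and (b) two chords on a single circle of $\Lambda$ with alternating endpoints. Together these are precisely the obstructions to embedding $X(L_0)$ into a disjoint union of $l$ disks with $\Lambda$ on the boundary.

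For (a), I argue by contradiction: suppose some node $\mu \in L_0$ is a crossing of distinct real branches $K, K' \subset L_0$. The smooth $MW_\lambda$-perturbations $L_t$ of $L_0$ satisfy condition $(iii^t)$ of \cite[Theorem 2]{MO-mw}, and this property passes to the limit $L_0$ with nodes counted with the appropriate multiplicity. Using Lemma \ref{lemqq} to fix a line $\ell_{q\bar q}$ disjoint from $L_0$, a generic real plane $P$ containing $\ell_{q\bar q}$ meets $A_0$ in $d$ points with the conjugate pair $q, \bar q$ accounting for $2$ and the remaining $d-2$ real intersections distributed over the real branches of $L_0$ in accordance with $\sum_{K''} a(K'') = d-2$ from \eqref{a-comp}. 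Forcing $P$ to pass through $\mu$ merges one $K$-intersection with one $K'$-intersection at the same geometric point. The expected contradiction is that, combined with the topological fact that the components of $W_g(\alpha) \cong L_t$ lie on pairwise disjoint hyperboloids in $\rp^3$, two distinct real branches of an equigeneric degeneration cannot acquire a common node while preserving the per-branch intersection pattern of planes through $\ell_{q\bar q}$ required by $(iii^t)$.

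For (b), within a single real branch $K$, assume two chords $c_1, c_2$ have alternating endpoints on the corresponding circle of $\Lambda$. Such a configuration produces several closed loops $\lambda_i \subset L_0$ that traverse both chord-points, and formula \eqref{alambda} computes $a(\lambda_i) = 2|\lk(\lambda_i,\ell_{q\bar q})|$. Using additivity of linking numbers across the regions cut out by the chord diagram and comparing with \eqref{a-comp} and the partition $\alpha$ of the perturbation $L_t \cong W_g(\alpha)$, I aim to derive an arithmetic inconsistency with the non-negative integer values of the $a_j$. Equivalently, each component of $W_g(\alpha)$ is a torus knot on a hyperboloid, so its nodal limit restricted to a tubular neighborhood of its hyperboloid carries a chord pattern forced to be planar by the annular fibration of that hyperboloid.

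The main obstacle is step (a): the local intersection multiplicity of $P$ with $A_0$ at a node is $2$ irrespective of whether the two local branches through the node belong to one or to two real branches of $L_0$, so purely local counting cannot distinguish the two cases. The contradiction must be global, combining \eqref{a-comp}, the per-branch strengthening of $(iii^t)$ inherited from $L_t$ by continuity as $t \to 0$, and the topological disjointness of the hyperboloidal components of the model link $W_g(\alpha)$.
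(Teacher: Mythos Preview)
Your proposal correctly isolates the two obstructions to planarity, but neither argument is complete, and you essentially concede this in your final paragraph: forcing a generic plane through $\ell_{q\bar q}$ to pass through the node contributes local multiplicity $2$ regardless of how many real branches meet there, so this choice of plane cannot separate the cases. The appeal to the hyperboloidal model $W_g(\alpha)$ does not help either; nothing about the topology of the smooth perturbation prevents two of its branches from colliding in the nodal limit, so the ``global'' contradiction you hope for is not actually there.

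The key idea you are missing is the choice of plane. The paper takes $H$ to be the real plane \emph{tangent to both local branches} of $A_0$ at the node $x$. This bitangency is what manufactures the excess intersection. For a real branch $K$ through $x$, pick $q \in A_0\setminus\R A_0$ close to $x$, so that $\ell_{q\bar q}$ is close to the tangent line to $K$ at $x$; then $H$ perturbs to a real plane $H'\supset\ell_{q\bar q}$. From $H'\supset\ell_{q\bar q}$ one gets $\#(H'\cap K)\ge a(K)$, and the complexification of $H'$ additionally meets $A_0$ at $q,\bar q$, so the intersection number of $H$ with $A_0$ along $K$ satisfies $n(K)\ge a(K)+2$. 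Case~(a) is now immediate: if two distinct branches $K_1,K_2$ meet at $x$ then summing over all branches gives $\sum n(K)\ge \sum a(K)+4=d+2$ by \eqref{a-comp}, contradicting B\'ezout. Case~(b) uses the same mechanism on the two loops $\lambda_1,\lambda_2$ into which a second node $y$ cuts $K$: if the two chords interlace then both $\lambda_j$ pass through $x$, hence $n(\lambda_j)\ge a(\lambda_j)+2$, and (after a small correction when $y\in H$) one again overcounts to at least $d+2$.
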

\ignore{
The second statement of the lemma may be reformulated
with the help of the so-called {\em chord}
(or {\em Gau{\ss}})
diagrams. Namely, let $\tilde K\to K$
be the normalization of the component $K$
(i.e. the restriction of the normalization
$\rho:\tilde A\to A$ to the corresponding component
of $\tilde K\subset\R\tilde A$).
The circle $\tilde K$ bounds a disk $D$.
For each node $x\in K$ we connect
the two points of $\rho^{-1}(\nu)$ with a chord
$\chi_\nu\subset D$ (a straight interval).
The union 
\[
\Delta=\tilde K\cup\bigcup\limits_\nu \chi_\nu
\]
over all nodes $x$ is called {\em the chord
diagram} of $K$.
The lemma claims that the chords in the case
of a nodal
$MW_\lambda$-link do not intersect.
}
\begin{proof}
\ignore{
Let $\ell\subset\rp^3$
be a an oriented line whose complexification
passes through a pair of complex conjugate
points of $A$. 
Let $K$ be a component of the normalization
$\tilde L_0$
of $L_0$.
}
Let $L_0=\R A_0$ be a nodal $MW_\lambda$-link and
$\nu:\tilde A_0\to A_0$ be the normalization of $A_0$.
Let $x$ be a node of $L_0$ and $H$ be
the real plane tangent to both local 
branches
of $A_0$ at $x$.
\ignore{
We denote with $T\subset\rp^3$ the real tangent surface to $A_0$, 
i.e. the union
of all tangent lines to $L_0$.
For $q\in A_0\setminus\R A_0$ we
denote with $l_q\subset\rp^3$ the real line
whose complexification
 passes through $q$ (and $\bar q$).
Note that by \cite[Lemma 3.9]{MO-mw} the
line $l_q$ cannot be contained in $H$ as
$H$ contains both tangent lines at $x$.
Thus $l_q\cap H$ consists of a single point.
Choose a component $S\in\tilde A_0\setminus \R\tilde A_0$.
The map $S\to H$ is an embedding by
\cite[Proposition]{MO-mw}
whose image coincides with $V\cap H$
for a component $V\subset\rp^3\setminus T$.
Note that the quoted results of \cite{MO-mw} are
valid also for the nodal $MW_\lambda$-curve
$A_0$..

Let $K$ be a real branch of $A_0$.
We have
 $2|\lk(\ell,K)|=a(K)>0$ for the linking
number $\lk(\ell,K)$ of $\ell$ and $K$.
By \cite[Lemma 3.12]{MO-mw} we have
\begin{equation}\label{lemmw} 
\sum\limits_K a(K)=d-2
\end{equation}
where the sum is taken over all real branches $K$.
A generic 
plane $H\subset\rp^3$
intersects $K$ at least in $a(K)$ distinct
real points since $\ell\subset H$.

Suppose that two
distinct real branches $K_1,K_2$
meet at $x$. Then
the intersection of a small
neighborhood $W_j$ of $K_j$ in $A$ and $H$ in $\cp^3$)
is at least $a(K_j)+2$, $j=1,2$.
Indeed, we have a contribution of at least 2 at
$x$ while a point of $H\cap W_j$ with an even
local intersection number does not contribute
to the topological linking number $\lk(\ell,K_j)$.
Thus the intersection
number of $H$ with $A$ along $\tilde L_0$
is at least $d+2$
which contradicts to the Bezout theorem.

Suppose now that $x$ is a self-meeting point
of the same real branch $K$,
and $y$ is another
self-meeting node of $K$.
The inverse image of $y$ under the normalization
$\nu:\tilde L_0\to L_0$
splits $K$ into two arcs $B_1,B_2\subset K$.
The images $Z_j=\nu(B_j)\in L_0$ are cycles.
We have $a(K)=b_1+b_2$ 
for $b_j=2\lk(\ell,Z_j)$, $j=1,2$.

If the chord diagram of $L_0$ is not planar
then there exists a real branch $K$
with two nodes $x,y$ so that $Z_1$ and $Z_2$
both pass through $x$. 
Consider the intersection number $\beta_j$ of $H$
and a small neighborhood
of $B_j$ in $\cp^3$. Once again,
due to the tangency at $x$
we have $\beta_j\ge b_j+2$.
If $y\notin H$ then 
we get a contradiction
with the Bezout theorem with the help of \eqref{lemmw}
as $\beta_1$ and $\beta_2$ are responsible 
for geometrically distinct points.
If $y\in H$ then $\beta_j\ge b_j+1+w$,
where $w\ge 2$ is the local intersection number
of $A$ and $H$ in $\cp^3$. 
This time $\beta_1$ and $\beta_2$ have a common
contribution of $w$ and thus the intersection
number of $A$ and $H$ along $K$ is 
\[
\beta_1+\beta_2-w
= a(K)+2+w\ge a(K)+4
\]
once again producing a contradiction with
\eqref{lemmw}.
\ignore{
The intersection number of $H$ and 
$Z_j$ (counted with multiplicities)
is at least $b_j$ since $H$ 
contains $\ell_H$ and is tangent to $Z_j$.
Once again we get
a contradiction
to the Bezout theorem.
}
\end{proof}

Planarity implies the following statement.

\begin{coro}
\label{coro-planar}
An $MW_\lambda$-link $L_0$ contains $l+\delta$
distinct embedded loops such that
the intersection of any pair of
such loops is a finite set 
in $L_0$.
These $l+\delta$ loops are uniquely defined by this
property. 
\end{coro}
We call such loops the {\em planar loops} of $L_0$.
\begin{proof}
In the planar diagram $X(L_0)$ we choose
$l+\delta$ cycles corresponding to the
pieces into which
the embedding of $X(L_0)$ cuts the disjoint
union of $l$ disks.
\end{proof}

Consider a
planar loop $\lambda\subset L_0=\R A$
in a nodal $MW_\lambda$-link $L_0\subset\rp^3$.
Let 
$z_+,z_-\in A\setminus\R A\subset\pp^3$
be a pair of complex conjugate points.
Any plane $H\subset \rp^3$ passing through $z_+$ and $z_-$
must intersect $L_0$ in $d-2$ points transversally
since $L_0$ is a
$MW_\lambda$-link. Let $a(\lambda)$ be the
cardinality of $H\cap\lambda$. Note that $a(\lambda)$
does not depend on the choice of $z_+,z_-$
since all relevant intersections remain transversal
under a deformation of this pair of complex
conjugate points.
Thus $a(\lambda)$ is well defined and non-zero
(since the plane $H$ can be chosen to pass through 
a point on $\lambda$). By Corollary
\ref{coro-planar} we have
\begin{equation}
\label{sumloops}
\sum\limits_\lambda a(\lambda)=d-2.
\end{equation}

\begin{coro}\label{maxco}
For a nodal $MW_\lambda$-curve 
we have $\delta\le d-3-g$.
\end{coro}
\begin{proof}
The statement follows from \eqref{sumloops}
since its left-hand side 
is at least $l+\delta=g+1+\delta$
by Corollary \ref{coro-planar}.
}
Consider the intersection number $n(K)$ of $H$ and $A_0$ 
along a real branch $K\ni x$.
Namely, $n(K)$ is the intersection number in $\pp^3$ of the
complexification of $H$ and $\nu(U)$, where $U$ is
a small neighborhood of $\tilde K$ in $\tilde A_0$
(here $\tilde K$ is the connected component of $\R\tilde A_0$
such that $\nu(\tilde K)=K$).
Consider
a point $q\in \nu(U)\setminus K$ close to $x$.
Then the line $\ell_{q\bar q}$ is close to the tangent line
at $x$ to $K$.
The plane $H$ may be perturbed to a real plane $H'\supset\ell_{q\bar q}$.
	We have $\#(H'\cap K)\ge 
2|\lk(\ell_{q\bar q},K)|=a(K)$ since $H'\subset\ell_{q\bar q}$.
But in addition the complexification of $H'$ intersects $A_0$
	at $q$ and $\bar q$. Thus $n(K)\ge\#(H'\cap K)+2\ge a(K)+2$.

If two distinct real branches $K_1,K_2$ meet at $x$
then taking the sum over all real branches we get
the inequality
\[
\sum n(K)\ge \sum a(K)+4=d+2
\]
contradicting to the Bezout theorem for the intersection of $H$ and
$A_0$.

A node $y\in L_0$ where both local branches correspond
to the same real branch $K$ defines
a subdivision of $K$ into two loops
$\lambda_j\subset K$, $j=1,2$.
Each loop $\lambda_j$ is the closure
of the image of an arc of $\tilde K\setminus\nu^{-1}(y)$
under $\nu$. 
If the chord diagram $X(L_0)$ is not planar
then 
we may assume that $K,x,y$ where chosen so
	that $\lambda_1$ and $\lambda_2$ pass through $x$.
	We have $a(\lambda_1)+a(\lambda_2)=a(K)$; see \eqref{alambda}.

Denote
with $n(\lambda_j)$ the intersection number of the complexification
	of $H$ and $\nu(U_j)$ for a small neighborhood 
	$U_j$ of $\nu^{-1}(\lambda_j)$ in
$\tilde A_0$. We get $n(\lambda_j)\ge a_j+2$
as above.
If $y\notin H$ then 
	$\nu(U_1\cap U_2)$ is disjoint from $H$,
	thus
	$n(K)=n(\lambda_1)+n(\lambda_2)
\ge a(K)+4$,
and we get a contradiction to the Bezout Theorem as before.

If $y\in H$ then $n(K)=n(\lambda_1)+n(\lambda_1)-n(y)$,
where $n(y)$ is the local intersection number of $A_0$ and
the complexification of $H$
at $y$. But in this case we have 
$n(\lambda_1)+n(\lambda_1)\ge a(K)+4+n(y)$ so we get a contradiction
to the Bezout Theorem anyway.
\end{proof}

\begin{defn}\label{def-degchord}
The {\em degree-chord} diagram of a nodal $MW_\lambda$-link $L_0$
is a chord diagram enhanced with a number $a(\lambda)$ defined
for any planar loop of $L_0$.
The number $a(\lambda)$ is 
the {\em degree} of the planar loop $\lambda$
of the planar chord diagram $X(L_0)$.
\end{defn}

\section{Divisors on singular Riemann surfaces}\label{nRS}
Let $S$ be a (closed connected irreducible)
Riemann surface. 
\begin{defn}
The {\em chord diagram} $X_S$ on $S$ is
the topological space obtained by attaching to $S$
a finite number of 1-cells ({\em chords})
$I\approx [0,1]$ in an acyclic way,
i.e. so that the union of all chords
is a forest (in particular no chord is
a loop in $X_S$).
Let $\delta$ be the number of chords.
The {\em singular surface} $\Sigma=\Sigma(X_S)$
is obtained by contracting each chord to a point. 
\end{defn}

If the boundaries of the chords are disjoint in $S$,
we say that $X_S$ is {\em nodal}.
Then we may consider $\Sigma$ as an abstract nodal curve.
Otherwise $\Sigma$ has $k$-fold points with $k>2$.

The theory of divisors and their linear systems
on $\Sigma$ is very similar to its counterpart
on ordinary Riemann surfaces.
Denote with $N\subset\Sigma$ the singular locus,
i.e. the image of the union of chords
under the contraction map $c:X_S\to\Sigma$.
Let $\tilde N=c^{-1}(N)\cap S$. As usual,
for a meromorphic function $f:S\to\cp^1=
{\mathbb C}\cup\{\infty\}$
we denote with $(f)$ its divisor on $S$ composed
of its zeroes and poles with multiplicities.
Let $\Sigma^*=\Sigma\setminus N=S\setminus\tilde N$.

Let us fix an effective divisor
$D=\sum\limits_{j=1}^n a_jp_j$, $a_j\in\Z_{>0}$,
with $p_j\in S\setminus\tilde N$.
Denote with $V_D$ the set of meromorphic
functions $f:S\to\cp^1$ such that $(f)\ge -D$
(i.e. $f$ has, at worst, a pole of order $a_j$
at $p_j$, and holomorphic elsewhere) and
\[
c(x)=c(y) \implies f(x)=f(y),\ \forall x,y\in S.
\]
The space $V_D$ is a vector space.
We say that a divisor $D'$ on $S$
is $\Sigma$-equivalent to $D$,
and write $D\sim D'$, if $D-D'=(f)$ for $f\in V_D$.
Recall that we require $D$ to be disjoint from $\tilde N$
while we do not impose this requirement on $D'$.
In particular, the relation $\sim$ is not an equivalence
relation (cf. Remark \ref{rem-odivisorax}). 
Let $|D|_\Sigma$ be the set of 
effective divisors $\Sigma$-equivalent to $D$.
It can be naturally identified with the
projectivization of $V_D$.

The Riemann-Roch theorem implies that
for each chord $I\subset X_S$
there exists a meromorphic form $\omega_I$ on $S$,
holomorphic on $S\setminus\dd I$ and
having a pole of the first order at
each point of $\dd I$. 
Clearly, the residues of $\omega_I$
at the two points of $\dd I$ must be opposite. 
Multiplying this form by a constant,
we may assume that
the residues of $\omega_I$ at $\dd I$
are $\pm 1$. 
Furthermore, for a choice of a system of $a$-cycles,
i.e. of $g$ simple loops $\alpha_j$, $j=1,\dots,g$,
in $S$ such that 
$S\setminus\bigcup_{j=1}^g\alpha_g$ is 
a planar domain, we may assume that
\begin{equation}\label{omI}
\int\limits_{\alpha_j}\omega_I=0
\end{equation}
by adding a holomorphic form on $S$.
 
Denote with $\Omega(\Sigma)$ the vector space 
generated by the holomorphic forms on $S$ and
the forms $\omega_I$ for the chords $I\subset X_S$.
This is the space of meromorphic forms on $S$
holomorphic on $S\setminus\tilde N$, 
having at worst poles of the first order at
the points of $\tilde N$, and such
that the sum of residues at all points of $\tilde N$
corresponding to the same point of $N$ vanishes.
Thus $\Omega(\Sigma)$ depends only on $\Sigma$,
and not on a particular representation of $\Sigma$
by $X_S$.
We have 
\[
\dim \Omega(\Sigma)=g+\delta.
\]
We refer to the elements of $\Omega(\Sigma)$
as {\em $\Sigma$-holomorphic form}.
%

Any element of $H_1(\Sigma^*)$ defines a functional
on $\Omega(\Sigma)$ through integration.
The {\em Jacobian} of $\Sigma$ is defined as
the quotient
\[
\operatorname{Jac}(\Sigma)=\Omega^*(\Sigma)/H_1(\Sigma^*)
\] 
of the dual vector space to $\Omega(\Sigma)$
by the image of $H_1(\Sigma^*)$.
It is a $(g+\delta)$-dimensional complex variety
homeomorphic to $(S^1)^{2g}\times
({\mathbb C}^\times)^\delta$.
The divisor $D$ defines the {\em Abel-Jacobi map}
\[
\alpha_D:\operatorname{Sym}^n(\Sigma^*)\to
\operatorname{Jac}(\Sigma)
\]
by associating to $D'$
the functional $\omega\mapsto\int\limits_\Gamma\omega$,
where $\Gamma$ is a 1-chain in $\Sigma^*$ with
$\dd\Gamma=D'-D$.

\begin{thma}[the Abel-Jacobi Theorem for $\Sigma$]
\label{AJthm}
For an effective divisor $D'$ supported on
$\Sigma^*$
we have $D'\in |D|_{\Sigma}$ if and only if $\alpha_D(D')=0
\in\operatorname{Jac}(\Sigma)$.
\end{thma}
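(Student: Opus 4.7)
The plan is to reduce to the classical Abel-Jacobi theorem on the smooth curve $S$, supplemented by $\delta$ ``node identification'' conditions contributed by the chords. Using the normalization \eqref{omI}, $\Omega(\Sigma)$ splits as the direct sum of the $g$-dimensional space of holomorphic $1$-forms on $S$ and the $\delta$-dimensional span of the normalized chord forms $\omega_I$, whose linear independence is guaranteed by the forest hypothesis on the chord configuration. Dually, $H_1(\Sigma^\ast)=H_1(S\setminus\tilde N)$ contains $H_1(S)$ together with small loops $\sigma_p$ around each point $p\in\tilde N$; the class $\sigma_p$ pairs trivially with every holomorphic form on $S$ and pairs to $\pm 2\pi i$ with $\omega_I$ exactly when $p\in\partial I$.

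For the forward direction, suppose $D'-D=(f)$ for some $f\in V_D$, and let $\Gamma$ be a $1$-chain in $\Sigma^\ast$ with $\partial\Gamma=D'-D$. For any holomorphic form $\omega$ on $S$, the classical Abel-Jacobi theorem applied to $f$ gives $\int_\Gamma\omega$ in the period lattice of $S$. For a chord form $\omega_I$ with $\partial I=\{x,y\}$, one computes $\int_\Gamma\omega_I$ by applying Stokes' theorem to the meromorphic $1$-form $\omega_I\cdot\log f$ on $S$ cut open along $\Gamma$ and along an auxiliary arc from $x$ to $y$: the boundary contributions express $\int_\Gamma\omega_I$ modulo $H_1(S)$-periods as $\log f(x)-\log f(y)$, which lies in $2\pi i\Z$ because $f(x)=f(y)$. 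Since $2\pi i\Z$ is exactly the sublattice generated by pairings with the small loops $\sigma_x,\sigma_y$, we conclude $\alpha_D(D')=0$ in $\operatorname{Jac}(\Sigma)$.

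Conversely, suppose $\alpha_D(D')=0$. Restricting the vanishing condition to the holomorphic forms on $S\subset\Omega(\Sigma)$ and applying the classical Abel-Jacobi theorem on $S$ produces a meromorphic function $f_0$ on $S$ with $(f_0)=D'-D$, unique up to a multiplicative scalar. Evaluating the vanishing hypothesis against each chord form $\omega_I$ and running the residue computation of the forward direction in reverse (taking the small-loop contributions $\pm 2\pi i$ into account) gives $\log f_0(x_I)-\log f_0(y_I)\in 2\pi i\Z$, i.e.\ $f_0(x_I)=f_0(y_I)$, for every chord $I$ --- up to a single overall rescaling ambiguity on each connected component of the forest of chords. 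The forest hypothesis is precisely what makes these $\delta$ conditions simultaneously achievable by one rescaling on each tree, at which point $f_0\in V_D$ and $D'\in|D|_\Sigma$.

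The main technical obstacle is the careful bookkeeping of the multi-valuedness of $\log f_0$ against the period lattice: one must verify that the small-loop classes in $H_1(\Sigma^\ast)$ contribute exactly the $2\pi i\Z$ indeterminacies needed to upgrade the congruence $\log f_0(x_I)\equiv\log f_0(y_I)\pmod{2\pi i\Z}$ to the equality of function values $f_0(x_I)=f_0(y_I)$, and that the acyclicity of the chord configuration is exactly the combinatorial input ensuring that the $\delta$ rescaling conditions are compatible with one overall constant on each tree of chords. Once the normalizations are aligned, the argument reduces to a direct transcription of the classical proof.
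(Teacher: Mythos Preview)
Your approach coincides with the paper's: both directions rest on the reciprocity law for the third-kind differentials $\omega_I$, obtained by integrating $\Log(f)\,\omega_I$ around a contour encircling $\Gamma=f^{-1}(\R_{\ge 0}\cup\{\infty\})$. The paper records this identity as
\[
\int_\Gamma\omega_I=\int_{\Gamma_I}\frac{df}{f}
\]
with $\Gamma_I$ a path between the chord endpoints disjoint from $\Gamma$, and proves it just as you sketch.

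Where you over-complicate things is the last paragraph. Once the reciprocity computation yields $\log f_0(x_I)-\log f_0(y_I)\in 2\pi i\Z$, exponentiating gives $f_0(x_I)=f_0(y_I)$ outright; there is no residual ``rescaling ambiguity'' to manage, and no compatibility condition to verify across trees. The scalar ambiguity in $f_0$ is a single global constant on the irreducible surface $S$, and multiplying $f_0$ by any scalar leaves the equalities $f_0(x_I)=f_0(y_I)$ untouched. The forest hypothesis enters only where you first invoke it---to guarantee linear independence of the $\omega_I$ and hence $\dim\Omega(\Sigma)=g+\delta$---and plays no further role in the converse. Accordingly, the paper dispatches the converse in a single line: by the reciprocity identity, $\int_{\Gamma_I}df/f$ vanishes modulo $2\pi i\Z$ for each chord $I$, so $f\in V_D$.
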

\begin{proof}
Suppose that $D'\in |D|_{\Sigma}$, i.e.  
$(f)=D'-D$ with $f\in V_D$.
By the conventional Abel-Jacobi theorem  
we have $\int\limits_\Gamma \omega=0$
for any holomorphic 1-form $\omega$ in $S$.
Suppose that $I$ is a chord in $X_S$ and 
$x_+,x_-\in S$ are the endpoints of $I$.
Let $\omega_I$ be the meromorphic form
on $S$, holomorphic on $S\setminus\{x_+,x_-\}$,
with the simple poles of residue $\pm 1$ at $x_\pm$,
and satisfying to \eqref{omI}.
Multiplying $f$ by a scalar we may assume that
$f(x_+),f(x_-)\in {\mathbb C}$ are non-real numbers.
The following equality is known as a reciprocity law 
for the Abelian differentials of the third kind
(cf. e.g. \cite[Lecture 6, Lemma 3]{Dubrovin}) 
\begin{equation}\label{recipro}
\int\limits_\Gamma\omega_I=
\int\limits_{\Gamma_I}\frac{df}{f}. 
\end{equation}
Here $\Gamma=f^{-1}(\R_{\ge 0}\cup\{\infty\})$ is
oriented so that $\dd\Gamma=D'-D$ while
$\Gamma_I$ is a path in $S$ from $x_-$ to $x_+$
disjoint from $\Gamma$.
To prove \eqref{recipro} it suffices to integrate
the 1-form $\Log(f)\omega_I$ over a small contour $\gamma$
going around $\Gamma$ for a holomorphic branch 
of $\Log(f)$ in $S\setminus\Gamma$.
Since the values of $\Log(f)$ at the two sides
of $\Gamma$ differ by $2\pi i$,
we see that $\int\limits_\gamma\Log(f)\omega_I$ 
equals to $2\pi i$ times the left-hand side
of \eqref{recipro}.
By Cauchy's residue formula
$\int\limits_\gamma\Log(f)\omega_I$ 
equals to $2\pi i$ times the right-hand side of \eqref{recipro}.

Since $f\in V_D$, the right-hand side
of \eqref{recipro} is zero modulo $2\pi i\Z$,
and thus $\alpha_D(D')=0$.
Conversely, suppose that $\alpha_D(D')=0$.
By the conventional Abel-Jacobi theorem there exists a
meromorphic function $f:S\to\cp^1$ with $(f)=D'-D$.
By \eqref{recipro} the integral $\int\limits_{\Gamma_I}\frac{df}{f}$
vanishes (modulo $2\pi i\Z$) for any chord $I$.
Thus $f\in V_D$.
\end{proof}

Denote with $\Omega_{-D}(\Sigma)\subset\Omega(\Sigma)$
the vector space
formed by the $\Sigma$-holomorphic forms vanishing in all points of $D$. 
\begin{coro}[The Riemann-Roch Theorem for $\Sigma$]
\label{RRthm}
We have 
\[
\dim |D|_\Sigma = n-g-\delta+\dim\Omega_{-D}(\Sigma).
\]
\end{coro}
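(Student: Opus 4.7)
The plan is to imitate the classical Abel--Jacobi proof of Riemann--Roch, using Theorem \ref{AJthm} as a black box to reduce the computation of $\dim|D|_\Sigma$ to a tangent-space count on the $(g+\delta)$-dimensional Jacobian $\operatorname{Jac}(\Sigma)$.

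First, I would identify the Zariski-open subset $|D|_\Sigma\cap\operatorname{Sym}^n(\Sigma^*)\subset|D|_\Sigma$ with the fiber $\alpha_D^{-1}(0)\subset\operatorname{Sym}^n(\Sigma^*)$: by Theorem \ref{AJthm}, the map $[f]\mapsto (f)+D$ realizes this identification. Since $|D|_\Sigma\cong\mathbb{P}(V_D)$ is a smooth projective space, its dimension equals $\dim\ker(d\alpha_D)_{D'}$ at any generic reduced divisor $D'=q_1+\dots+q_n\in|D|_\Sigma$ supported on $\Sigma^*$.

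Next, I would compute the rank of $d\alpha_D$ at such a $D'$. The tangent space to $\operatorname{Sym}^n(\Sigma^*)$ at $D'$ is $\bigoplus_{i=1}^n T_{q_i}\Sigma^*$, and the tangent space to $\operatorname{Jac}(\Sigma)$ at $0$ is $\Omega(\Sigma)^*$, of dimension $g+\delta$. Exactly as in the smooth case, $d\alpha_D$ sends $(v_i)$ to the functional $\omega\mapsto \sum_i \omega(q_i)(v_i)$, whose image in $\Omega(\Sigma)^*$ is dually characterized as the annihilator of $\Omega_{-D'}(\Sigma)\subset\Omega(\Sigma)$. Hence $\operatorname{rk}(d\alpha_D)_{D'}=(g+\delta)-\dim\Omega_{-D'}(\Sigma)$, which gives
\[
\dim|D|_\Sigma \;=\; n-(g+\delta)+\dim\Omega_{-D'}(\Sigma).
\]

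Finally, I would show that $\dim\Omega_{-D'}(\Sigma)=\dim\Omega_{-D}(\Sigma)$. Writing $D'-D=(f)$ with $f\in V_D$, the defining property $f(x)=f(y)$ whenever $c(x)=c(y)$ means that $f$ descends to a meromorphic function on $\Sigma$; in particular, multiplication by $f$ preserves the residue-matching condition at the endpoints $x_\pm$ of every chord, since the residues of $f\omega$ there differ from those of $\omega$ by the common factor $f(x_+)=f(x_-)$. Thus $\omega\mapsto f\omega$ defines an isomorphism $\Omega_{-D}(\Sigma)\xrightarrow{\sim}\Omega_{-D'}(\Sigma)$: the zeros of $\omega$ at $D$ cancel the poles of $f$ there, while $f$ contributes zeros of the required orders at the points of $D'$. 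Combining the three steps yields the claimed formula. The most delicate point is this last step, where the singular geometry of $\Sigma$ genuinely enters: one must check that multiplication by an element of $V_D$ preserves both the first-order pole condition at $\tilde N$ and, crucially, the residue-matching relation defining $\Omega(\Sigma)$. The tangent-space computation is otherwise a direct transcription of the classical smooth-curve Riemann--Roch argument.
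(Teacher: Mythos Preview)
Your proposal is correct and follows essentially the same route as the paper: both compute the kernel and cokernel of the differential $d\alpha_D$ of the Abel--Jacobi map and apply rank--nullity. The only difference is that the paper works directly at $D$ (allowing multiplicities), whereas you pass to a generic reduced $D'\in|D|_\Sigma$ and then transport back via the isomorphism $\omega\mapsto f\omega$; this last step is correct but unnecessary.
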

\begin{proof}
Consider the differential $d\alpha$ of the Abel-Jacobi map
at $D\in \operatorname{Sym}^n(\Sigma^*)$.
It is a linear map from the $n$-dimensional vector space
composed as the direct sum of the tangent spaces of $S$ at the
points of $D$ (with multiplicities) to 
the $(g+\delta)$-dimensional space $\Omega^*(\Sigma)$. 
The cokernel of this differential coincides with the space $V_D$.
According to Theorem \ref{AJthm} the kernel is $V_D$.
\end{proof}

We say that $D$ is {\it non-special}
in $\Sigma$ if $\dim |D|_\Sigma = n - g - \delta$,
i.e., if $\Omega_{-D} (\Sigma) = 0$ (recall that $\supp D\subset\Sigma^*$). 

Consider the chord diagram $X^I_S$
obtained from $X_S$ by removing a chord $I$
and the corresponding singular surface
$\Sigma^I$.
\begin{lem}\label{lem-DI}
If a divisor $D=\sum\limits_{j=1}^n a_jp_j$,
$p_j\in\Sigma^*$, is non-special in $\Sigma$
then it is also non-special in $\Sigma^I$.
\end{lem}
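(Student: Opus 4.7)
The plan is to reduce everything to a simple inclusion of spaces of $\Sigma$-holomorphic forms. By Corollary \ref{RRthm} (the Riemann-Roch theorem for $\Sigma$ and $\Sigma^I$), non-speciality of $D$ in $\Sigma^I$ is equivalent to the vanishing of $\Omega_{-D}(\Sigma^I)$, so the only thing I need to prove is
\[
\Omega_{-D}(\Sigma^I)\subset\Omega_{-D}(\Sigma).
\]

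First, I would observe that there is a natural inclusion $\Omega(\Sigma^I)\hookrightarrow\Omega(\Sigma)$. Indeed, a form $\omega\in\Omega(\Sigma^I)$ is, by definition, meromorphic on $S$, holomorphic away from $\tilde N\setminus\dd I$, with at worst simple poles on $\tilde N\setminus\dd I$, and residue-sum zero at each point of $N$ represented by a chord of $X_S^I$. Such an $\omega$ is automatically holomorphic at the two points of $\dd I$, so the additional residue-sum condition imposed at the removed chord $I$ holds trivially (both residues are $0$). Hence $\omega\in\Omega(\Sigma)$.

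Next, since $D$ is supported on $\Sigma^*=S\setminus\tilde N\subset S\setminus(\tilde N\setminus\dd I)$, the vanishing condition defining $\Omega_{-D}$ is literally the same on both sides of the inclusion. Therefore
\[
\Omega_{-D}(\Sigma^I)=\Omega(\Sigma^I)\cap\Omega_{-D}(\Sigma)\subset\Omega_{-D}(\Sigma).
\]
The hypothesis that $D$ is non-special in $\Sigma$ means $\Omega_{-D}(\Sigma)=0$, so $\Omega_{-D}(\Sigma^I)=0$ as well. Applying Corollary \ref{RRthm} to $\Sigma^I$ (whose number of chords is $\delta-1$) then gives
\[
\dim|D|_{\Sigma^I}=n-g-(\delta-1),
\]
i.e.\ $D$ is non-special in $\Sigma^I$.

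There is no real obstacle here; the only point that might merit a line of care is the natural identification $\Omega(\Sigma^I)\subset\Omega(\Sigma)$, which amounts to the observation that relaxing the residue condition at one chord cannot shrink the space of admissible forms. Once that is set up, the lemma is immediate from the Riemann-Roch statement already proved.
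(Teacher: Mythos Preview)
Your proof is correct and follows exactly the same approach as the paper: the inclusion $\Omega(\Sigma^I)\subset\Omega(\Sigma)$ immediately gives $\Omega_{-D}(\Sigma^I)\subset\Omega_{-D}(\Sigma)$, and the hypothesis $\Omega_{-D}(\Sigma)=0$ finishes it. The paper's version is terser, but your added justification for the inclusion (that a form in $\Omega(\Sigma^I)$ is holomorphic at $\partial I$, so the residue condition at the removed chord is vacuous) is the right reason and worth spelling out.
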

\begin{proof}
By definition we have $\Omega(\Sigma^I)\subset\Omega(\Sigma)$ whence
$\Omega_{-D}(\Sigma^I)\subset\Omega_{-D}(\Sigma)$.
Thus the hypothesis of $\Omega_{-D}(\Sigma)=0$
implies the conclusion $\Omega_{-D}(\Sigma^I)=0$. 
\end{proof}

With each effective divisor $D$ such that
$\supp D\subset \Sigma^*$ we associate
the map
\begin{equation}\label{n-iota}
  \iota_D : S \to |D|_\Sigma^\vee,
\end{equation}
sending $x$ to the hyperplane of $V_D$ corresponding to divisors
$D' \in |D|_\Sigma$ with $x \in D'$.
This mapping evidently factors through a mapping $\Sigma\to |D|_\Sigma^\vee$.

As in the classical theory, any holomorphic mapping $f:S\to\pp^k$
which factors through $\Sigma$ is a composition of $\iota_D$
with a linear projection $\pi:|D|_\Sigma^\vee\dashrightarrow\pp^k$
where $D$ is
the pull-back of a generic plane section of $f(S)$. The image of
$\pi$ is the minimal subspace of $\pp^k$ containing $f(S)$.

\begin{rem}\label{rem-odivisorax}
When a smooth curve is embedded to a complete linear system $|D|$,
the embedded curve can be completely recovered (up to projective
transformation) by any hyperplane section. 
Nevertheless, the situation is very different
in the case of nodal curves.
If a hyperplane section passes through some nodes,
then it does not determine the embedding. 
This is the reason why we have supposed that
$\supp D\subset\Sigma^*$ in this section.
\end{rem}

\section{Real singular Riemann surfaces}
A chord diagram $X_S$ on a Riemann surface $S$
is {\em real algebraic} if the Riemann
surface $S$ is real
(i.e., endowed with an antiholomorphic involution
 $\conj:S\to S$),
and the
boundaries of all chords are contained
in $\R S=\operatorname{Fix}(S)$.
Then we may consider
\[
\R X_S=X_S\setminus(S\setminus\R S)
\]
(the union of $\R S$ with all the chords).
If $X_S$ is nodal then the pair $(\R X_S,\R S)$
is the chord diagram in the sense of 
Definition \ref{def-cd}.
Otherwise, it can be considered as a degeneration of such diagram.

We say that a real algebraic chord diagram $X_S$ is planar if 
$\R X$ can be embedded into the disjoint union of
disks in such a way that $\R S$ is mapped homeomorphically
onto the boundary. Note that in the case when the chord
boundaries are disjoint, this definition agrees with
Definition \ref{planar-la}.
We have $l+\delta$
planar loops in $\R X_S$
(here $l=b_0(\R S)$) that correspond to the components
of the complement of the image of this embedding.

\begin{lem}\label{evenzeroes}
Suppose that $X_S$ is a real planar chord diagram
and $\omega\in \Omega(\Sigma)$ is a real form on $S$ whose zeroes are
disjoint from $\tilde N$.
Then every planar loop in $\R X_S$
contains an
even number of zeroes of $\omega$ (counted with multiplicities).
\end{lem}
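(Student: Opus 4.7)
The plan is to track the sign of $\omega$ evaluated on the oriented tangent vector of $\lambda$ along its $\R S$-arcs, and to show that this sign is globally consistent around the loop, which forces the number of zeros (with multiplicities) to be even. First, orient $\lambda$ as the counterclockwise boundary of one of the $l+\delta$ planar regions into which $\R X_S$ cuts the disjoint union of disks, using the embedding supplied by planarity.

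Along each $\R S$-arc $\alpha$ of $\lambda$, the real form $\omega$ restricts to a real meromorphic 1-form whose zeros lie in the interior of $\alpha$ (by hypothesis, disjoint from $\tilde N$) and whose simple poles occur only at the endpoints of $\alpha$. Evaluating $\omega$ on the oriented tangent vector of $\lambda$ yields a real-valued function whose sign is locally constant away from zeros of $\omega$ and flips precisely at zeros of odd multiplicity. Thus the parity of the total zero count on $\alpha$ (counted with multiplicities) equals the parity of the number of sign flips along $\alpha$.

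The key step is verifying that this sign is preserved when $\lambda$ passes from one arc to the next through a chord $I$ with endpoints $\tilde\nu_\pm\in\R S$ and corresponding residues $r_\pm$ of $\omega$. In a real coordinate $x_+$ vanishing at $\tilde\nu_+$ one has $\omega=r_+\,dx_+/x_+ +\eta_+$ with $\eta_+$ holomorphic, and similarly at $\tilde\nu_-$ with coordinate $x_-$ and residue $r_-$. The counterclockwise convention coming from the planar embedding determines on which side of $\tilde\nu_+$ (i.e., the sign of $x_+$) the adjacent arc of $\lambda$ lies, as well as the sign of the tangent direction of $\lambda$ there; the same data are pinned down at $\tilde\nu_-$. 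A direct computation of $\omega$ on the oriented tangent just before entering $\tilde\nu_+$ and just after leaving $\tilde\nu_-$, together with the residue relation $r_++r_-=0$ supplied by $\omega\in\Omega(\Sigma)$, shows that the two signs coincide.

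Granted sign preservation at every chord traversal, the only sign flips of $\omega$ on the tangent along $\lambda$ come from odd-order zeros of $\omega$ on the $\R S$-arcs. Since this real function must return to its initial value as $\lambda$ closes up, the total number of flips is even, so the number of odd-order zeros on $\lambda$ is even, and hence the total zero count with multiplicities on $\lambda$ is even. The main obstacle is the local sign computation at chord traversals: one must carefully relate the CCW orientation of the boundary of a planar region to the signs of $x_\pm$ and of the tangent direction at each endpoint $\tilde\nu_\pm$, and then invoke the residue constraint $r_++r_-=0$ to conclude the sign agreement. Once this local bookkeeping is in place, the rest of the argument is a short global parity count.
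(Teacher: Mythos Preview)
Your approach is essentially the same as the paper's: the paper phrases it as the real form $\omega$ inducing an orientation of $\R S$ away from its zeroes, observes that the opposite residues at the two endpoints of a chord make this orientation compatible with an orientation of the planar loop across the chord, and then concludes by orientability of the circle. Your sign-tracking of $\omega$ on the oriented tangent of $\lambda$, with the residue condition $r_++r_-=0$ ensuring sign preservation at each chord traversal, is exactly this argument written out in local coordinates.
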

\begin{proof}
In the complement of its zeroes,
a real form $\omega$ defines an orientation of the underlying curve.
Since the residues of $\omega$ at the endpoints of each chord are
opposite, this orientation agrees with an orientation of the planar
loop near the chord. Thus the lemma follows from the orientability of a circle.
\end{proof}
\begin{coro}\label{nsD}
An effective real divisor $D=\sum\limits_{j=1}^n a_jp_j$, $a_j\in\Z_{>0}$,
$p_j\in \Sigma^*$, is non-special, if $X_S$ is planar
and at least
$g+\delta$ distinct
planar loops of $X_S$ intersect $\supp(D)=\bigcup\limits_{j=1}^n\{p_j\}$.
\end{coro}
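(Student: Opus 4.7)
My plan is to argue by contradiction. Suppose $D$ is special, so by Corollary~\ref{RRthm} there is a nonzero form $\omega\in\Omega_{-D}(\Sigma)$; since $D$ is real and $\Omega(\Sigma)$ carries a real structure induced by $\conj$, I may take $\omega$ to be real. The basic numerology is that the divisor $(\omega)$ on the underlying smooth Riemann surface $S$ has degree $2g-2$, and since $\omega$ has at worst simple poles at the $2\delta$ points of $\tilde N$, the total number of zeros of $\omega$ counted with multiplicity is at most $2g+2\delta-2$.

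Assuming first that the zeros of $\omega$ are disjoint from $\tilde N$, I would apply Lemma~\ref{evenzeroes}: each planar loop of $\R X_S$ contains an even number of real zeros of $\omega$ counted with multiplicity. Each of the $g+\delta$ hypothesized planar loops $\lambda$ meeting $\supp D$ contains at least one zero of $\omega$ (of order $\geq a_j\geq 1$ at any real $p_j\in\lambda$), and therefore at least two by parity. Since $X_S$ is planar, each arc of $\R S$ between consecutive chord endpoints borders a unique face, so a real zero in $\Sigma^*$ is counted on exactly one planar loop. Summing over the $g+\delta$ loops meeting $\supp D$ thus produces at least $2(g+\delta)=2g+2\delta$ real zeros, contradicting the upper bound $2g+2\delta-2$.

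The main obstacle is that one cannot always arrange the zeros of $\omega$ to lie outside $\tilde N$: it may happen that every real element of $\Omega_{-D}(\Sigma)$ is forced to vanish at some specific $x\in\tilde N$. I would resolve this by induction on $\delta$; the base case $\delta=0$ is precisely Lemma~\ref{lem-ns}, since the planar loops of smooth $S$ are its real components. For the inductive step, in the exceptional case any such $\omega$ must be regular at both endpoints of the chord $I$ containing $x$ (the residues at the two endpoints are opposite and the one at $x$ vanishes), so $\omega\in\Omega(\Sigma^I)$, and in fact $\omega\in\Omega_{-(D+x)}(\Sigma^I)$ with $\supp(D+x)\subset \Sigma^{I,*}$. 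Thus $D+x$ is special on $\Sigma^I$. Passing from $X_S$ to $X_S^I$ merges the two planar loops adjacent to $I$ into one loop $\lambda_{12}$ containing $x$, so $\lambda_{12}$ meets $\supp(D+x)$ and at most one loop is lost from the collection of faces meeting $\supp D$; hence at least $g+(\delta-1)$ planar loops of $X_S^I$ meet $\supp(D+x)$, contradicting the inductive hypothesis. In the non-exceptional situation a generic real $\omega\in\Omega_{-D}(\Sigma)$ has zeros disjoint from $\tilde N$ and the argument of the second paragraph applies. The hard part is really this exceptional case: it forces the inductive setup and requires checking the bookkeeping of planar loops under removal of a chord.
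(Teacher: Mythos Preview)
Your argument follows the same route as the paper's: take a nonzero real $\omega\in\Omega_{-D}(\Sigma)$, use Lemma~\ref{evenzeroes} to force at least $2(g+\delta)$ real zeros on the distinguished planar loops, and contradict the degree bound $2g+2\delta-2$; handle the degenerate situation by removing a chord and inducting on $\delta$. The difference lies in the trigger for induction. The paper inducts whenever $\omega$ fails to have a \emph{pole} at the boundary of some chord $I$ (i.e.\ the residue there vanishes), and passes to $\Sigma^I$ with the same divisor $D$. You induct only when $\omega$ actually \emph{vanishes} at some $x\in\tilde N$, and pass to $\Sigma^I$ with the enlarged divisor $D+x$; your loop-counting for $D+x$ is correct, though the simpler choice $D$ already suffices.

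This difference is not innocent. In your ``main case'' it can happen that $\omega$ is holomorphic but \emph{nonzero} at the endpoints of some chord $I$ (residue zero, value nonzero): this does not fall into your exceptional case, so you invoke Lemma~\ref{evenzeroes}. Its stated hypothesis (zeros disjoint from $\tilde N$) is met, but the \emph{proof} of that lemma matches the $\omega$-orientations across a chord by using that the residues are opposite, and this matching is vacuous when both residues are zero --- indeed one can place a chord on a genus-$2$ real curve so that a real holomorphic form has exactly one simple zero in each of the two resulting planar loops. The paper's dichotomy avoids this: in its non-inductive branch $\omega$ has simple poles at every point of $\tilde N$, so the orientation argument goes through and the zero count is exactly $2g+2\delta-2$. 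Changing your inductive trigger from ``$\omega$ vanishes at some $x\in\tilde N$'' to ``$\omega$ has zero residue along some chord'' (and dropping the $+x$) closes the gap and makes your proof coincide with the paper's.
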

\begin{proof}
Suppose that $\omega\in\Omega_{-D}(\Sigma)$. If $\omega$ does not have
poles at the boundary of a chord in $X_S$ then we may remove this chord
and the statement follows by induction from the corresponding statement
for the resulting diagram in $(\delta-1)$ chords.

Otherwise $\omega$ has the total of $2g+2\delta-2$ zeroes
in $\Sigma^*$ (counted with multiplicities).
But Lemma \ref{evenzeroes} implies existence of at least $2g+2\delta$
zeroes contained in the given $g+\delta$ planar loops.
\end{proof}

Consider a real algebraic nodal curve $A_0\subset\pp^3$
endowed with a fixed orientation of its real branches
and its real {\em equigeneric perturbation} $A_t\subset\pp^3$,
(cf. Definition \ref{def-nMW} but now $A_t$
is not required to be smooth) for small $t>0$.
We say that $A_t$ is {\em positive}
(resp. {\em negative}) at a node $x\in A_0$
if the perturbed local branches form
a positive (resp. negative)
crossing according to Figure 1
under a generic projection.

Note that the signs of $A_t$ at all nodes 
are invariant if we reverse the orientation of $A_0$.
Thus we may choose
any orientation in the case if $A_0$ is rational.
In the case when $A_0$ is of type I (the case of this
paper) we take a complex orientation of $\R A_0$.

\begin{lem}\label{positive-perturb}
Let $\varphi:S\to\pp^3$ be an analytic mapping which factors through an
injective mapping $\Sigma\to\pp^3$
and $D$ be the pull-back of a real plane section.
Suppose that $A_0=\varphi(S)$ is a real
nodal curve, $\supp(D)\subset \Sigma^*$, $D$ is
non-special in $\Sigma$, and $\dim |D|_\Sigma \ge 3$.
Let $I_1,\dots,I_k$ be some chords of $X_S$.

Then for any orientation on $\R S$ there exists
a real equigeneric perturbation $A_t$ of $A_0$
which has any prescribed signs at the nodes
corresponding to $I_1,\dots,I_k$ and
which keeps all the other nodes.
\end{lem}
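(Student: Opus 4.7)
The plan is to carry out the perturbation parametrically on $S$: keep the first three defining sections of $\varphi$ fixed and modify only the fourth by a real correction $\sigma$ that violates the chord-gluing condition of $\Sigma$ at exactly $I_1,\dots,I_k$. The problem then reduces to a surjectivity statement on an enlarged section space, together with a local writhe-sign analysis at each opened node.

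Let $\Sigma'=\Sigma^{I_1,\dots,I_k}$ be the singular surface obtained by successively removing the chords $I_1,\dots,I_k$ from $X_S$, and let $V_D^{\Sigma'}$ be the corresponding space of meromorphic functions on $S$ (with $(f)\ge -D$ and equal values at the endpoints of each surviving chord). Iterated application of Lemma \ref{lem-DI} gives that $D$ is still non-special in $\Sigma'$, so Corollary \ref{RRthm} yields $\dim V_D^{\Sigma'}=\dim V_D+k$. The evaluation map
\[
E\colon V_D^{\Sigma'}\longrightarrow\CC^k,\qquad \sigma\mapsto\bigl(\sigma(x_{i,+})-\sigma(x_{i,-})\bigr)_{i=1}^k,
\]
where $\{x_{i,+},x_{i,-}\}=\dd I_i$, has kernel precisely $V_D$ and is therefore surjective by dimension count. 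Restriction to the real form of $V_D^{\Sigma'}$ (which exists because $D$ and $\Sigma$ are real) furnishes a real surjection onto $\R^k$, so any prescribed real tuple $(v_1,\dots,v_k)\in\R^k$ is realised as $E(\sigma)$ for some real $\sigma\in V_D^{\Sigma'}$.

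Writing $\varphi=[s_0:s_1:s_2:s_3]$ with real linearly independent $s_j\in V_D$ (possible since $\dim|D|_\Sigma\ge 3$) and choosing $\sigma$ as above with $\operatorname{sign}(v_i)$ matching the desired sign at $I_i$, set $\varphi_t=[s_0:s_1:s_2:s_3+t\sigma]$ and $A_t=\varphi_t(S)$. At every chord of $\Sigma$ outside $\{I_1,\dots,I_k\}$ all four sections still satisfy the gluing condition, so the corresponding node is preserved by $\varphi_t$ and remains a transverse node for small $t>0$ by openness of transversality. At each $I_i$ the first three sections still identify $x_{i,+}$ and $x_{i,-}$ while $s_3+t\sigma$ separates them linearly in $t$, opening the node into a transverse real crossing; thus $A_t$ is an equigeneric perturbation of $A_0$ that keeps exactly the unselected nodes. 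A local computation near each opened node shows that, for small $t>0$ and the given orientation of $\R S$, the writhe sign of the new crossing in a generic projection is determined to first order by $\operatorname{sign}(v_i)$: reversing $v_i$ reverses the separation direction in the fourth projective coordinate, which flips over/under in a generic projection and hence flips the writhe sign by the convention of Figure \ref{signs}. Since all $2^k$ sign patterns are independently achievable through the surjection $E$, every prescribed sign configuration at $I_1,\dots,I_k$ is attained. The main obstacle is this last local sign identification --- one must check that the writhe sign depends only on $\operatorname{sign}(v_i)$ and the orientation at $x_{i,\pm}$, independent of the other sections; independence across distinct chords is then automatic from the surjectivity of $E$.
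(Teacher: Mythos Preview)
Your argument is correct and follows essentially the same strategy as the paper: perturb only the fourth coordinate of $\varphi$ by a real section drawn from the enlarged space $V_D(\Sigma')$, using non-specialty (via Lemma~\ref{lem-DI}) and Riemann--Roch to guarantee enough room, and observe that reversing the sign of the correction flips the crossing sign. The only difference is organizational: the paper reduces inductively to the case $k=1$ (perturb one chord at a time), whereas you remove all $k$ chords at once and use surjectivity of the evaluation map $E$ onto $\R^k$ to hit all $2^k$ sign patterns simultaneously. Both routes rest on the same dimension count $\dim V_D(\Sigma')-\dim V_D(\Sigma)=k$; your version is slightly more direct and avoids checking that the inductive hypothesis persists after each single-node perturbation.
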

\begin{proof}
Inductively with the help of Lemma \ref{lem-DI},
it suffices to prove this lemma
for $k=1$, i.e. that we may perturb a single node
of $A_0$ equigenerically, keeping all the other
nodes, and choosing any prescribed sign for
the perturbation of this node.
Since $A_0\subset\pp^3$ is nodal, its projection
onto $\pp^2$ from a generic point of $\pp^3$
is a planar nodal curve $B\subset\pp^2$.
Let $s_0,s_1,s_2,s_3\in V_D(\Sigma)$ the sections
defining the curve $A_0$, and such that 
$s_0,s_1,s_2$ define the curve $B$
(cf. the proof of Proposition \ref{prop-mdeg}).

By Corollary \ref{RRthm},
$$\dim V_D(\Sigma') > \dim V_D(\Sigma),$$
where $\Sigma'$ is the singular surface
corresponding to the diagram $X^{I_1}$,
i.e. when the chord $I_1$ is removed.
Let $s\in V_D(\Sigma')\setminus V_D(\Sigma)$.
Then $s_0,s_1,s_2,s$ define an analytic mapping
$S\to\pp^3$ that factorize through $\Sigma'$ 
but not through $\Sigma$.
Thus $s_0,s_1,s_2,s_3+ts$, $t\in\R$,
define the required
perturbation of $A_0$ where
different signs of $t$ correspond to different
signs of the node perturbation.
\ignore{
Let $X'_S$ be the diagram obtained from $X_S$ by removing
the chords $I_1,\dots,I_k$ and $\Sigma'$ be the corresponding singular
surface. By iterating Lemma \ref{lem-DI}, we see that
$D$ is non-special in $\Sigma'$, hence by
the Riemann-Roch Theorem
(Corollary 5.3) we have
\begin{equation}\label{eqRR}
       \dim V_D(\Sigma') - \dim V_D(\Sigma) = k.         \end{equation}
Furthermore, we have
$$V_D(\Sigma)=\bigcap\limits_{j=1}^k
V_D(\Sigma'_{I_j}),$$
where $V_D(\Sigma'_{I_j})\subset V_D(\Sigma')$
is the hyperplane corresponding to inserting back
the node corresponding to $I_j$ in $\Sigma'$.
Because of \eqref{eqRR} these hyperplane must
intersect transversely.

Let us fix a generic point $p$ and choose real coordinates in $\pp^3$ so that
$p=(0:0:0:1)$. We may assume that the plane projection of $A$ from $p$
is a nodal curve. Consider the mapping $V_D(\Sigma')\to
{\mathbb C}^k$ defined by
$f\mapsto(z_1,\dots,z_k)$ where $z_j=f(y_j)-f(x_j)$.
By definition of $V_D$, the kernel of this mapping is $V_D(\Sigma)$ while..

By combining this fact with \eqref{eqRR}
we conclude that this mapping is surjective,
i.e., we may choose $f\in V_D(\Sigma')$ such that the differences
$f(y_j)-f(x_j)$ have any prescribed signs.
By correcting the last coordinate of $\varphi$ with
help of $f$ we obtain the desired perturbation of $A$.
}
\end{proof}

\section{Nodal Hopf links}
\begin{defn} 
A nodal $MW_\lambda$-link is called
a {\em nodal Hopf link} if it has $d-3-g$ nodes.
\end{defn}

By \eqref{a-comp} the degree of any planar loop in 
a nodal Hopf link is 1. Conversely, as
the number of planar loops is $d-2$,
we get the following characterization
of nodal Hopf links. 
\begin{prop}\label{nHld1}
A nodal $MW_\lambda$-link $L_0\subset\rp^3$
is a nodal Hopf link if and only if
the degree of each planar loop is one.
\end{prop}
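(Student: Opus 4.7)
The plan is to observe that this proposition is essentially a counting identity that falls out of two facts already established: the enumeration of planar loops in Corollary \ref{coro-planar}, and the sum formula \eqref{sumloops}. The key is that each planar loop has positive integer degree, so equality in an inequality forces all degrees to equal one.

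First, I would fix the arithmetic setup. By Corollary \ref{coro-planar}, the planar chord diagram $X(L_0)$ has exactly $l+\delta$ planar loops, where $l=g+1$ is the number of real branches and $\delta$ is the number of nodes. By \eqref{sumloops},
\[
\sum_{\lambda} a(\lambda) = d-2,
\]
where the sum runs over all $l+\delta$ planar loops. From the paragraph preceding \eqref{sumloops}, each $a(\lambda)$ is a strictly positive integer (any plane through a pair of complex conjugate points can be chosen to pass through a prescribed point of $\lambda$, and transversality is preserved). Therefore $l+\delta\le d-2$, with equality if and only if $a(\lambda)=1$ for every planar loop $\lambda$.

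For the forward implication, assume $L_0$ is a nodal Hopf link, so $\delta=d-3-g$ by definition. Then $l+\delta=(g+1)+(d-3-g)=d-2$, which forces $a(\lambda)=1$ for every planar loop.

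For the converse, if every planar loop satisfies $a(\lambda)=1$, the sum formula yields $l+\delta=d-2$, hence $\delta=d-2-l=d-3-g$, and $L_0$ is a nodal Hopf link by definition. There is no serious obstacle; the proposition is a bookkeeping consequence of the positivity of $a(\lambda)$ combined with \eqref{sumloops} and Corollary \ref{coro-planar}.
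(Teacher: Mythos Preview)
Your argument is correct and is exactly the same as the paper's: the paper gives only a one-sentence justification before the proposition (using the sum formula \eqref{sumloops} together with the count $l+\delta$ of planar loops from Corollary~\ref{coro-planar}), and your proposal spells this out in full. There is nothing to add.
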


The following proposition is an immediate
consequence of Proposition \ref{prop-mdeg}.
\begin{prop}\label{prop-degnHl}
Any nodal $MW_\lambda$-link
(in particular, a smooth $MW_\lambda$-link)
can be degenerated
to a nodal Hopf link.
\end{prop}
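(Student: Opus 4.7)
The plan is to combine Proposition \ref{prop-mdeg} with Corollary \ref{maxco} to pinpoint the node count at exactly $d-g-3$.

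First, if $L_0$ already has $d-g-3$ nodes, it is itself a nodal Hopf link by definition and the trivial family suffices. Otherwise, by Definition \ref{def-nMW}, $L_0$ admits an equigeneric perturbation to a smooth $MW_\lambda$-link $L_\epsilon$. Applying Proposition \ref{prop-mdeg} to the smooth link $L_\epsilon$ produces a continuous family of real algebraic curves of degree $d$ and genus $g$ connecting $L_\epsilon$ to a nodal link $L^*$ with at least $d-g-3$ nodes.

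The key intermediate step is to verify that $L^*$ is itself a nodal $MW_\lambda$-link. The family constructed in Proposition \ref{prop-mdeg}, read in reverse from $L^*$ back to $L_\epsilon$, is an equigeneric perturbation of $L^*$ whose smooth endpoint $L_\epsilon$ is $MW_\lambda$. Since the condition $|w_\lambda|=N_d-g$ defining $MW_\lambda$ is open among smooth real algebraic links of the given degree (as $w_\lambda$ is a topological invariant on the smooth locus), the smooth members of this reversed family near $L_\epsilon$ all remain $MW_\lambda$. Thus $L^*$ admits an equigeneric perturbation to a smooth $MW_\lambda$-link, so Definition \ref{def-nMW} is satisfied for $L^*$. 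Corollary \ref{maxco} applied to $L^*$ then gives the bound $\delta(L^*)\le d-g-3$, which combined with Proposition \ref{prop-mdeg} forces equality, so $L^*$ is a nodal Hopf link.

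Finally, concatenating the perturbation $L_0\rightsquigarrow L_\epsilon$ with the degeneration $L_\epsilon\rightsquigarrow L^*$ yields a continuous family of real algebraic curves of constant degree $d$ realizing the required degeneration of $L_0$ to the nodal Hopf link $L^*$. The principal subtlety is the middle step, which ultimately rests on the openness of the $MW_\lambda$ condition combined with the existence of the degeneration family; once one grants that the family from Proposition \ref{prop-mdeg} keeps $L^*$ in the closure of the smooth $MW_\lambda$ locus, the proposition follows immediately from the upper bound in Corollary \ref{maxco}.
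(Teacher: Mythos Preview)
Your argument follows the same route as the paper's one-line proof: use Proposition~\ref{prop-mdeg} to reach at least $d-g-3$ nodes and Corollary~\ref{maxco} to forbid more, forcing exactly $d-g-3$. You have in fact supplied considerably more detail than the paper does.

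Two minor points. First, the detour through a smooth $L_\epsilon$ is unnecessary: the inductive proof of Proposition~\ref{prop-mdeg} already degenerates an $n$-nodal curve to an $(n{+}1)$-nodal one while \emph{keeping} the existing nodes, so it applies directly to the nodal link $L_0$. This direct application is what the paper has in mind, and it is what is actually needed later in the proof of Theorem~\ref{thm-gen}, where the chord diagram of $L_0$ must sit inside that of the resulting nodal Hopf link; your route through a smooth $L_\epsilon$ would lose that containment. Second, your verification that $L^*$ is nodal $MW_\lambda$ is a little loose: the reversed Proposition~\ref{prop-mdeg} family is built by concatenating one-node steps and therefore passes through intermediate nodal curves, so it is not literally a family of smooth links for $t>0$ as Definition~\ref{def-nMW} requires. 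This is easy to repair (perturb all nodes of $L^*$ at once and observe that the resulting smooth curve has the same $w_\lambda$ as $L_\epsilon$, since the node count with signs in a generic planar projection is unchanged along the equigeneric family), and the paper's terse proof does not address it either.
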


\ignore{
Suppose $A\subset\cp^3$ is a real algebraic curve of degree $d$ whose
normalization is $\nu:\tilde A\to A$ is at most 2-1 map.
Let $N=\{x\in A\ | \#(\nu^{-1}(x))>1\}\subset A$ and
$\tilde N=\nu^{-1}(N)\subset\tilde A$.
Suppose that $\\tilde A$ is an $M$-curve, and that $N\subset \R A$.
Attaching chords along $\nu^{-1}(x)$, $x\in N$,
produces a real chord diagram $\R X_{\tilde A}$.
\begin{lem}
If $\R X_{\tilde A}$ is planar, and $\#(N)=d-g$.
\end{lem}
}

\begin{lem}\label{lem-SO}
Let $A$ be a real algebraic curve in $\pp^3$.
Let $P$ be a real plane
such that $A\cap P$ is finite.
Let $F\subset A\cap P$ be a finite set and let
$\lambda_1,\dots,\lambda_s$ be loops contained in $\R A\setminus F$
such that all pairwise intersections $\lambda_i\cap\lambda_j$ are finite,
and each loop $\lambda_i$ is non-trivial in $H_1(\rp^3)$. Then
$$
    \deg A \ge s + \sum_{x\in F} (A\cdot P)_x,
$$
where $(A\cdot P)_x$ is the local intersection
number of $A$ and $P$ at $x$.
\end{lem}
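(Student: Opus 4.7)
The plan is a Bezout-style count. Since $A\cap P$ is finite we have $A\not\subset P$, and Bezout's theorem yields
$$\deg A=\sum_{x\in A\cap P}(A\cdot P)_x=\sum_{x\in F}(A\cdot P)_x+\sum_{x\in(A\cap P)\setminus F}(A\cdot P)_x.$$
The lemma thus reduces to showing that the second sum on the right is at least $s$. The strategy is to assign to each loop $\lambda_i$ a point in $(A\cap P)\setminus F$ and verify that each such point carries enough local intersection multiplicity to accommodate all loops landing on it.

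For the assignment, fix $i$: the class $[\lambda_i]$ is non-trivial in $H_1(\rp^3;\Z/2)=\Z/2$, while the real plane $\R P\cong\rp^2$ generates $H_2(\rp^3;\Z/2)=\Z/2$. By Poincar\'e duality over $\Z/2$ the intersection pairing is non-degenerate, hence $[\lambda_i]\cdot[\R P]=1$ and in particular $\lambda_i\cap\R P\ne\emptyset$. Since $\lambda_i\subset\R A\setminus F$, this intersection lies in $(A\cap P)\setminus F$. Choose $y_i\in\lambda_i\cap\R P$ for each $i$ and set $k_x=\#\{i:y_i=x\}$ for $x\in(A\cap P)\setminus F$, so that $\sum_x k_x=s$.

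The core of the argument is the local inequality $(A\cdot P)_x\ge k_x$. Two distinct loops $\lambda_i,\lambda_j$ passing through $x$ cannot traverse the same local real branch of $A$ at $x$: were they to do so, they would coincide along a common arc and $\lambda_i\cap\lambda_j$ would be infinite, contradicting the hypothesis. Hence the $k_x$ loops through $x$ traverse $k_x$ pairwise distinct local branches of $A$ at $x$; each such branch meets $P$ at $x$ and therefore contributes at least $1$ to $(A\cdot P)_x$. Summing over $x$ gives
$$\sum_{x\in(A\cap P)\setminus F}(A\cdot P)_x\ge\sum_x k_x=s,$$
which is the desired bound.

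The main obstacle is the local claim, which requires some care at singular points of $A$: a topological loop may traverse several local branches at a node and need not be embedded, so ``distinct loops use distinct branches'' must be derived precisely from the finiteness of pairwise intersections rather than from any smoothness assumption. That finiteness hypothesis is exactly what rules out two different loops sharing a local arc of the curve, which is the ingredient the branch-counting step needs.
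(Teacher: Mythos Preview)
Your proof is correct, and it takes a genuinely different route from the paper's.

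The paper argues by perturbing $P$ to a $\conj$-invariant smooth (non-algebraic) $4$-submanifold $P'\subset\pp^3$ that agrees with $P$ near $F$, has $P'\cap\rp^3$ transverse to every $\lambda_j$, and meets $A$ only in points of positive local intersection number. A generic such $P'$ misses the finite set $\bigcup_{i<j}(\lambda_i\cap\lambda_j)$, so each $\lambda_j$ contributes a \emph{distinct} intersection point to $A\cap P'$ outside $F$; the finiteness hypothesis is used here, as a genericity ingredient. You instead keep $P$ fixed and absorb the possible coincidence of intersection points into a local branch count: when several $\lambda_i$ are forced to hit $\R P$ at the same point $x$, the finite-intersection hypothesis forces them onto pairwise distinct local branches of $A$ at $x$, each of which contributes at least $1$ to $(A\cdot P)_x$. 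Your approach is a bit more elementary in that it avoids smooth $4$-manifold perturbations and the attendant positivity-of-intersections discussion; the paper's approach trades that for a clean transversality picture in which no local multiplicity analysis is needed.
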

\begin{proof}
  It is enough to consider a perturbation $P'$ of $P$
in the class of $\conj$-invariant smooth 4-dimensional 
submanifolds of $\pp^3$.
 We may choose $P'$ to coincide with $P$ near $F$.
In addition we may ensure that $P'\cap\rp^3$
is transverse to each $\lambda_j$
 and all local intersections of $P'$ with $A$ are positive.
  \end{proof}
\ignore{
\begin{proof}
For each $y\in G=(A\cap P)\setminus F$,
we choose neighborhoods
$y\in U_y\subset V_y$ disjoint from $F$.
Let $P'$ be a smooth non-algebraic perturbation of $P$
such that:
\begin{itemize}
\item
$P'$ is conj-invariant, in particular $P'\cap \rp^3$
is a perturbation of $\R P$;
\item
$P'$ coincides with $P$ outside $\bigcup_{y\in Y} V_y$;
\item
For each $y\in Y$, we have:
$P'\cap U_y=P_y\cap U_y$ for a real plane $P_y$ transverse to $A$, and
$P'\cap(V_y\setminus U_y)$ is disjoint from $A$.
\end{itemize}
Then $\deg A = \sum_{z\in P'\cap A}(P\cdot A)_z$
(by the intersection theory in $\cp^3$),
and each $\lambda_i$ contributes at least $1$ to this sum
(by the intersection theory mod $2$ in $\rp^3$).
\qed 
}

The following proposition generalizes Proposition
\ref{prop-iHl}. We use the notations of Section \ref{nRS}.
Let $V_D^\vee$ be the dual space of $V_D$ and $|D|_\Sigma^\vee$ be its projectivization.

Let $X_S$ be a planar nodal real algebraic chord diagram 
in $l=g+1$ circles and 
$\delta$ chords.
\begin{defn}
A {\em Hopf divisor} in $X_S$
is a real divisor  $D\subset \Sigma^*$
of degree $l+\delta+2$
such that the part of $D$ contained
in every planar loop of $X_S$ has odd degree.
\end{defn}
In other words, $D$ has a single point at each of
$l+\delta$ 
planar loop of $X_S$, and also two more
points which either form a complex-conjugate pair,
or both belong to the same planar loop of $X_S$.
Proposition \ref{nHld1} assures that a real
plane section
of a nodal Hopf link is a Hopf divisor.
The next proposition assures the converse.

Recall that a real curve $A_0\subset\pp^3$
is said to be {\em hyperbolic}
with respect to a line $\ell\subset\rp^3$ if
for any real plane $P\subset\pp^3$ passing through $\ell$
each intersection point of $P\setminus\ell$ and $A_0$
is real.
\begin{prop}\label{prop-thmH}
For a Hopf divisor $D\subset\Sigma^*$ the map
\[
\iota_D:S\to |D|_\Sigma^\vee\approx\pp^3,
\]
from \eqref{n-iota}
is a well-defined immersion
whose image is a nodal Hopf link.
Furthermore,
$\iota_D$ factors through an embedding of $\Sigma$ to
$|D|_\Sigma^\vee$. 
\ignore{
Then
there exists
a nodal Hopf link
(of degree $d=l+\delta+2$ and genus $g=l-1$)
whose chord diagram is $(X,\Lambda)$.
Furthermore, all $MW_\lambda$-links of
the same degree and the
same chirality
corresponding to the
same diagram $X$ are rigidly isotopic.
}
\end{prop}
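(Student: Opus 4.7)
The plan is to realize $\iota_D:\Sigma\to\pp^3$ as an embedding onto a real nodal curve, and then apply Lemma~\ref{positive-perturb} to produce a smooth $MW_\lambda$-perturbation, showing $A_0=\iota_D(\Sigma)$ is a nodal $MW_\lambda$-link with $\delta=d-3-g$ nodes.

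First I would pin down the target and the factoring through $\Sigma$. Since $D$ has odd positive degree on each of the $l+\delta$ planar loops of $X_S$, every planar loop meets $\supp(D)$; as $l+\delta\ge g+\delta$, Corollary~\ref{nsD} gives that $D$ is non-special in $\Sigma$, and then Corollary~\ref{RRthm} yields $\dim|D|_\Sigma=(l+\delta+2)-g-\delta=3$, so the target of $\iota_D$ is $\pp^3$. The factoring of $\iota_D$ through $\Sigma$ is immediate from the defining constraint $f(\tilde x)=f(\tilde y)$ for $c(\tilde x)=c(\tilde y)$ on $V_D$, which forces the vanishing hyperplane to coincide across each chord.

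Next I would prove that $\iota_D:\Sigma\to\pp^3$ is an embedding. Via Corollary~\ref{RRthm} this reduces to the three dimension equalities
\[
\dim V_{D-x}=\dim V_D-1, \qquad \dim V_{D-2x}=\dim V_D-2, \qquad \dim V_{D-x-y}=\dim V_D-2,
\]
for every $x\in S$ and every $x,y\in S$ with $c(x)\ne c(y)$; each of these amounts to vanishing of the corresponding space $\Omega_{-(\cdot)}(\Sigma)$. I would verify these vanishings by rerunning the parity-of-zeros argument from the proof of Corollary~\ref{nsD}: by Lemma~\ref{evenzeroes}, a hypothetical nonzero form in $\Omega_{-(\cdot)}(\Sigma)$ would need an even number of zeros on every planar loop, but the Hopf condition (all $l+\delta$ loops carry odd $D$-degree) forces a parity shift that, combined with the $D$-forced zeros, strictly exceeds the total $2g+2\delta-2$ zeros available to any $\Sigma$-holomorphic form on $\Sigma^*$. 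The first equality yields well-definedness of $\iota_D$; the second gives the immersion at every smooth point of $\Sigma$ and, at each chord-pair, the distinctness of the two tangent directions in $\pp^3$ (so the image has a node, not a cusp); the third yields separation of $c$-inequivalent points. Therefore $\iota_D$ embeds $\Sigma$, and $A_0=\iota_D(\Sigma)$ is a real nodal curve in $\pp^3$ of degree $d=l+\delta+2$ and geometric genus $g$, with exactly $\delta$ nodes, all real and non-solitary.

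Finally, Lemma~\ref{positive-perturb} yields a smooth real equigeneric perturbation $A_t\subset\pp^3$ of $A_0$ with any prescribed signs at the $\delta$ nodes; I would choose signs to agree with the writhe of a fixed complex orientation of $\R A_0$. Then every real plane through a pair of conjugate points of $A_t$ meets each of the $l+\delta$ planar loops carried from $X_S$ in at least one real point and the conjugate pair in two real points, totaling $d=l+\delta+2$ real intersections, so condition $(iii^t)$ of \cite[Theorem~2]{MO-mw} holds and $A_t$ is a smooth $MW_\lambda$-link. Hence $A_0$ is a nodal $MW_\lambda$-link by Definition~\ref{def-nMW}, and as $\delta=d-3-g$ it is a nodal Hopf link. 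The main obstacle is the third dimension equality when $x,y$ sit on two distinct planar loops each of minimal $D$-degree $1$: removing both points costs two loops in the Corollary~\ref{nsD} hypothesis, and the parity count must then be run by hand, crucially using that the remaining $l+\delta-2$ loops are still odd and each forces an extra zero of $\omega$, yielding the needed contradiction $(n-2)+(l+\delta-2)>2g+2\delta-2$.
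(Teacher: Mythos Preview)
Your reduction of the embedding claims to the three dimension equalities via Corollary~\ref{nsD}--type parity counts is a reasonable alternative to the paper's route through Lemma~\ref{lem-SO}, and the count you sketch for $x,y$ on distinct degree-$1$ loops is correct. But two points in this part are not covered. First, your three equalities are stated for $x\in S$, yet well-definedness and nodality at a chord endpoint $x\in\tilde N$ are different conditions: one must show that the sections vanishing at $x$ (hence at the partner $x'$) form a genuine hyperplane, and separately that the tangent directions of the two local branches at the node are distinct in $\pp^3$. The latter is not $\dim V_{D-2x}=\dim V_D-2$; it is the statement that some section vanishes to order $\ge 2$ at $x$ but only to order $1$ at $x'$. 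The paper handles both of these with Lemma~\ref{lem-SO} (a plane tangent to one branch to order $3$ and to the other to order $2$ would overshoot the degree), not with parity, and you would need an additional argument here.

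The genuine gap is in your last paragraph. After an equigeneric perturbation the $\delta$ nodes disappear and the $l+\delta$ planar loops coalesce into $l=g+1$ real branches; there are no longer ``$l+\delta$ planar loops carried from $X_S$'' on $\R A_t$ for a plane to meet, and a branch obtained from an even number of loops is null-homologous in $\rp^3$, so it need not meet a given plane at all. Condition $(iii^t)$ also concerns every real plane, not just those through a conjugate pair, and the two conjugate points contribute complex intersections, not real ones. So your verification of $(iii^t)$ for $A_t$ does not go through. What is actually needed---and what the paper supplies in its Steps~3--5---is an argument on $A_0$ itself: Lemma~\ref{lem-SO} forces every real tangent plane to have only real intersections with $A_0$ (hyperbolicity with respect to tangent lines), which in turn forces all crossings in a generic plane projection of $\R A_0$ to be of the same sign. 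Only then does Lemma~\ref{positive-perturb}, with the nodes perturbed to that same sign, yield a smooth curve whose projection has $N_d-g$ crossings all of one sign, i.e.\ a genuine $MW_\lambda$-link. Choosing the perturbation signs ``to agree with the writhe'' is not meaningful without first knowing that the non-nodal crossings already share a sign.
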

\begin{proof}
\ignore{
We search for a real algebraic link $A$ with the normalization
$\R\tilde A$ of the real part given by $\Lambda$.
%
Choose a (smooth) real algebraic curve $\tilde A$
(i.e. a compact Riemann surface with an antiholomorphic
involution $\conj$)
of genus $g=l-1$ with $l$ connected
real branches.
The curve $\tilde A$ is determined by a choice
of a conformal structure on $\tilde A/\conj$.
Fix also an orientation-preserving diffeomorphism
between $\Lambda$ and $\R \tilde A$
and define $\Lambda_0$ to be the quotient of
$\Lambda$ under the equivalence relation 
identifying the endpoints of the same chord.
Then choose a real divisor $D$ on $\tilde A$
of degree $d=l+\delta+2$ so that
$D$ contains a point at each of $l+\delta$
planar loops of $\R A$
in addition to a pair
of complex conjugate points.
More precisely, $D$ is composed of
a pair of complex conjugate points
and $l+\delta+1$
points of $\Lambda$
which can be the points of $\Lambda^\circ$ or
the endpoints of the chord. We require that if
$D$ contains one endpoint of a chord then
it also contains the other endpoint.

Next we note that the configuration of
the points with this property
in is path-connected.
To see this we consider the dual graph 
to the planar embedding of $(X,\Lambda)$
to the disjoint union of $l$ disks.
Its dual graph $\Gamma$ is a tree with $l+\delta$
vertices corresponding to the planar cycles.
A planar cycle $\alpha$ dual to 
a 1-valent vertex is adjacent to
a single chord $\gamma$ of $(X,\Lambda)$. 
The intersection $\alpha\cap \Lambda^\circ$
is a single arc. We may deform a point of $D$ on
this arc arbitrarily close to $\gamma$.

A planar cycle $\beta$ dual to a vertex that is
connected with a 1-valent vertex with an edge
may be adjacent to several chords.
However all, but possibly one of these chords
are adjacent to a 1-valent vertex $\alpha$. We may
deform a point of $D$ making it jump 
across such a chord by simultaneously placing
the point from the cycle $\alpha$ to the other
end of the chord.
Inductively we get path-connectedness of the 
configurations.

Our divisor $D$ determines a line bundle over $\tilde A$.
The space of its sections is a vector space $\Gamma$,
$\dim\Gamma\ge d-g=\delta+3$ while the curve $\tilde A$ is 
mapped to the projectivization $\pp(\Gamma^*)$ 
of the dual vector space to $\Gamma$.
Each chord of $(X,\Lambda)$ defines a subspace of codimension $\le 1$
in $\Gamma$ by the condition that the values of the 
sections on the endpoints of the chord coincide
(with respect to a choice of the identifications 
of the fibers over the endpoints).
Let $\Delta\subset\Gamma$ be the intersection of these
subspaces over all chords of  $(X,\Lambda)$.
Consider the map 
\begin{equation}\label{ADelta}
\iota_\Delta:\tilde A\to \pp(\Delta^*)
\end{equation}
associating to a point $p\in \tilde A$
the hyperplane in $\Delta^*$ formed by
the sections from $\Delta$ vanishing on $p$.
We have $\iota_\Delta=\pi\circ\iota_D$,
where $\iota_D=\iota_{\Gamma}$ is the map
defined by the complete linear system $|D|$.

The Riemann-Roch theorem implies
that $\dim \Delta\ge d+1-g-\delta=d+2-l-\delta=4$.
For the 
}
\ \\
{\bf Step 1:}
{\it $\iota_D$ is an immersion to $\pp^3$.}
We have $|D|_\Sigma\approx\pp^3$ by Corollary \ref{nsD}. 
The same corollary implies that $\dim |D|_\Sigma >  |D-x|_\Sigma$
for any $x\in \supp D\cap\Sigma^*$.
Also we have
$$\dim |D-\dd I|_{\Sigma^I}= \dim
|D|_{\Sigma^I}-2<|D|_\Sigma.$$
Here the equality follows, once again, from Corollary \ref{nsD}
(recall that $X_S$ is planar, so
both points of $\dd I$ belong to the same 
component of $\R S$) while the inequality 
follows from the observation that $|D|_\Sigma$
has only one additional linear constraint
in addition to those of $|D|_{\Sigma^I}$.
Thus $\iota_D$ is well-defined,
and $A=\iota_D(S)\subset\pp^3$ is a real algebraic curve of degree 
$l+\delta+2$ with plane sections parameterized by $|D|_\Sigma$.

If $\iota_D$ is not an immersion at $x\in S$ then any plane section
containing $x$ has multiplicity greater than one at $x$. Taking
the plane through $x$ and a pair of complex conjugate points of $A$
we get a contradiction with Lemma \ref{lem-SO}.

{\bf Step 2:} {\it $A$ is a nodal curve with exactly $\delta$ nodes.}
If $x\in \R A$ is such that
$\iota_D^{-1}(x)\setminus\R S\neq\emptyset$ then
the plane through $x$ and a pair of conjugate
points of $A$ provides a contradiction with
Lemma \ref{lem-SO}. 
If $x\in A\setminus\rp^3$ is such that $\#(\iota_D^{-1}(x))>1$
then we get a similar contradiction by considering a real plane
containing $x$ and $\conj(x)$. This means that 
$\iota_D|_{S\setminus\R S}$
is an embedding. 

Suppose that we have $\iota_D(x)=\iota_D(y)$
for $x\neq y\in \R S$.
Consider the plane $H$ through $\iota_D(x)$
tangent to the local
branches of $S$ at $x$ and $y$.
The corresponding divisor
has multiplicity at least 2 at $x$ and $y$.
If $x,y\in \R S\setminus\tilde N$
then by the parity reasons there must be 
another intersection point of this plane with
the planar cycles containing $x$ or $y$
which gives a contradiction.
Suppose that $y\in \tilde N$, and $y'\neq x$
is the other endpoint of the same chord.
Then the divisor cut by $H$ 
also contains $y'$.
Let us perturb $H$ to a generic plane section with
two imaginary points near $x$. By the degree count
no other imaginary points may appear under this perturbation. 
Thus $y$ and $y'$ produce at least three real intersection
points which
should be repartitioned among the two planar cycles adjacent to $I$.
Thus at least one planar cycle of $\R X_S$ contains more than
a single point of the divisor.
Since the divisor has a pair of complex conjugate points,
we have a contradiction once again.
Thus $\iota_D$ identifies only the endpoints of the same
chord.

To prove that $A$ is nodal it remains to show that the two branches
at each of its singular points are not tangent to the same direction.
But if they were, we could find a plane tangent to one of the branch
with order at least 3 and, simultaneously, tangent to the other branch
 with order of at least 2, and obtain a contradiction
with Lemma \ref{lem-SO}.

{\bf Step 3:} {\it $A$ is hyperbolic with respect to any real tangent line (for definition, see just before Proposition \ref{prop-thmH})}. 
Using Lemma \ref{lem-SO} we conclude that a plane tangent to $\R A$
has only real intersections with $\R A$ (cf. the condition $(ii)$
in Theorem 2 of \cite{MO-mw}), and also that no plane may be
tangent to a local branch of $\R A$ with order greater than 3.
The latter condition implies that the (differential geometric)
torsion of $\R A$ cannot change sign within the same real branch
of $A$.
Reflecting the orientation of $\rp^3$ if needed, we may assume
that $\R A$ has points of positive torsion.

{\bf Step 4:} {\it Positivity of the linking number
of a pair of oriented tangent lines to $\R A$.}
Here we assume the orientation to be compatible
with a fixed complex orientation of $\R A$.
Due to the hyperbolicity of $\R A$ with respect to tangent lines,
this fact follows from  \cite[Lemma 4.6 and Lemma 4.7]{MO-mw}
(see, in particular, \cite[Figure 2]{MO-mw}).
In its turn this positivity implies that
all points of $\R A$ have positive torsion,
and also that all crossing points
(in the knot-theoretic sense) of a plane projection
of $\R A\subset\rp^3$ are positive.

{\bf Step 5:} {\it There exists a point $p\in\rp^3$
such that all singularities of $\pi_p(A)$
are ordinary real nodes with real local branches.} 
The central projection $\pi_p:A\to \pp^2$
from 
a generic real point $p$ on a tangent line $\ell$ to $\R A$ at 
its non-nodal point has a cusp corresponding to $\ell$.
Furthermore, all singular points of the curve $\pi_p(\R A)\subset\rp^2$
must be real singularities such that all of its branches
are also real. Indeed,
the curve $\pi_p(\R A)$ may not have a pair of complex conjugate
singularities, as otherwise the inverse image under $\pi_p$
of the real line through this pair would intersect $A$ at least in
4 imaginary points producing a contradiction with
Lemma \ref{lem-SO}.
Also the curve $\pi_p(\R A)$ may not have a real singularity $q$
with imaginary branches as otherwise we would get the contradiction
to Lemma \ref{lem-SO}
by considering  the plane passing through $\ell$
and $\pi_p^{-1}(q)$.

Consider the central projection $\pi_{p'}:A\to \pp^2$
from a generic real point $p'$ close to $p$.
The image $\pi_{p'}(A)$ is a real nodal curves with
the nodes of three types: the nodes of $A$, the perturbations
of the nodes of $\pi_p(A)$ and the node resulting from the
cusp of $\pi_p(A)$.
Choosing $p'$ in an approproate way we ensure that
the last node has real local branches.
Recall that all the nodes of $\pi_p(\R A)$ are positive
as knot-theoretical crossing points by the previous step.

{\bf Step 6:} {\it $A$ is a nodal Hopf link.} 
Lemma \ref{positive-perturb} allows us 
to perturb all the nodes of $\R A$ 
in a positive way.
The result of perturbation
has $N_d-g$ positive crossings,
and thus is
a $MW_\lambda$-link.
\ignore{
We claim that $\dim \Delta= 4$, so that $\pp(\Delta^*)\approx\pp^3$.
Otherwise considering a projection 
$\pp(\Delta^*)\dashrightarrow \pp^3$ from an appropriate
linear space not intersecting the image of $\tilde A$
we may find an immersion $\tilde A\to\pp^3$
such that it has more than $\delta$ nodes.
Indeed, a generic projection to $\pp^3$ 
has only the $\delta$ nodes corresponding
to the chords of $(X,\Lambda)$,
and
is not contained in any plane.
A further projection to $\pp^2$
gives us a nodal
curve $B\subset\pp^2$ of geometric genus $g$.
It has all $\delta$ nodes corresponding to 
$(X,\Lambda)$, but also (since it is a projection
of a $\delta$-nodal spatial curve in $\pp^3$)
at least one extra node.
Furthermore, projections from a generic
line in $\pp^4$ give us a pencil of lifts $B\to\pp^3$,
and, in particular, a nodal $MW_\lambda$-link
with more than $\delta=d-3-g$ nodes.
We get a contradiction with Corollary \ref{maxco}. 

Thus the image of the map $\tilde A\to\pp(\Delta^*)\approx\pp^3$ 
is a nodal Hopf link well-determined up to a projective transformation.
Nodal Hopf links with the same chord diagram and the sign of
$w_\lambda$ are rigidly isotopic since 
orientation-preserving projective transformation are
isotopic while orientation-reversing projective transformations reverse
the sign of $w_\lambda$.
}
\end{proof}
\begin{prop}\label{nHlGd}
All nodal Hopf links of the same chirality
(the sign of $w_\lambda$ for
a smooth $MW_\lambda$-perturbation),
and with homeomorphic
chord diagrams are rigidly isotopic,
i.e. isotopic in the class of real nodal algebraic links of the same degree.
\end{prop}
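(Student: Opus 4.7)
The plan is to realize both nodal Hopf links via Proposition \ref{prop-thmH} and connect them through a continuous family of triples $(S, X_S, D)$ consisting of a real Riemann surface, a planar nodal real algebraic chord diagram on $S$, and a Hopf divisor. Concretely, I would write $L_0 = \iota_D(S)$ and $L_0' = \iota_{D'}(S')$ for $M$-curves $S, S'$ of genus $g = l-1$, planar nodal chord diagrams $X_S, X_{S'}$ (with $\delta = d - 3 - g$ chords) and Hopf divisors $D \subset \Sigma^*$, $D' \subset (\Sigma')^*$. The chord diagrams are homeomorphic by hypothesis.

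First I would construct a continuous deformation $S_t$ from $S$ to $S'$ in the moduli space of $M$-curves of genus $g$. This is possible because the quotient of such a curve by the anti-holomorphic involution is a disk with $g$ handles, whose moduli space (of conformal structures) is a connected cell. Simultaneously, using the homeomorphism of the two chord diagrams, I would transport the $\delta$ chord endpoints along $\R S_t$, obtaining a family of planar nodal real algebraic chord diagrams $X_{S_t}$ which interpolates between $X_S$ and $X_{S'}$. Since both diagrams are combinatorially the same planar object, this transport can be done so that at every $t$ the pairs of endpoints lie on the prescribed pair of components of $\R S_t$ and in the prescribed cyclic order, so $X_{S_t}$ remains planar and nodal throughout.

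Next I would deform the divisor. The space of Hopf divisors on a fixed $(S_t, X_{S_t})$ is connected: once a point is chosen in each of the $l + \delta$ planar loops (which form a connected configuration space, each loop being an open arc of $\R S_t \setminus \tilde N$), the remaining two points either form a complex conjugate pair in $S_t \setminus \R S_t$ (a connected set) or sit together on one of the loops. Thus $D$ can be deformed through Hopf divisors $D_t \subset \Sigma_t^*$ to $D'$ along with the family above. By Proposition \ref{prop-thmH}, each $\iota_{D_t}$ gives a nodal Hopf link $L_t$ of the same degree $d = l + \delta + 2$, so this provides a rigid isotopy between $L_0$ and a nodal Hopf link $L_1$ which is projectively equivalent to $L_0'$, say $L_1 = \varphi(L_0')$ for some $\varphi \in PGL_4(\R)$.

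The final step concerns chirality. The group $PGL_4(\R)$ has two connected components, and $w_\lambda$ is invariant under the identity component but changes sign under the other. Since $L_0$ and $L_0'$ have, by assumption, the same chirality (the sign of $w_\lambda$ of any smooth $MW_\lambda$-perturbation, which is preserved under equigeneric deformation), the transformation $\varphi$ must lie in the identity component, hence is isotopic to $\id$ through projective transformations. Composing the projective isotopy with the family constructed above gives the desired rigid isotopy. The main obstacle I anticipate is verifying that the chord deformation $X_{S_t}$ can genuinely be carried out so that every intermediate $D_t$ remains supported in $\Sigma_t^*$ (i.e. avoids the moving chord endpoints) and non-special, but this is a codimension condition that can be avoided by a generic perturbation of the path, invoking Corollary \ref{nsD} to ensure non-speciality at every stage.
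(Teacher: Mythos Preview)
Your overall plan---realize both links via Proposition~\ref{prop-thmH} and deform the triple $(S,X_S,D)$, then use the two components of $PGL_4(\R)$ for the chirality---is exactly the paper's strategy. However, the step where you claim the space of Hopf divisors on a fixed $(S_t,X_{S_t})$ is connected has a genuine gap.

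The sentence ``each loop being an open arc of $\R S_t\setminus\tilde N$'' is false. A planar loop is a closed cycle in $X$ that alternates between arcs of $\Lambda$ and chords; its intersection with $\Lambda^\circ=\R S\setminus\tilde N$ generally consists of \emph{several} disjoint arcs (as many as the number of chords on its boundary). Hence the configuration space of ``one point of $\Sigma^*$ per planar loop'' is disconnected, and a point cannot be slid from one arc of its loop to another without hitting a chord endpoint---at which moment the divisor is no longer supported in $\Sigma^*$ and Proposition~\ref{prop-thmH} does not apply. The two extra points of the Hopf divisor do not repair this: moving a real point across a chord endpoint places it in the \emph{adjacent} planar loop, destroying the parity condition.

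This is precisely the difficulty the paper isolates and resolves with Lemmas~\ref{dmove} and~\ref{dmoves}. Rather than trying to connect two Hopf divisors by a path in the divisor space, the paper shows that two Hopf triples differing by a ``chord move'' (the simultaneous jump of two divisor points across the two endpoints of a single chord) arise as two nearby real plane sections of one and the same nodal Hopf link, the limiting section passing through the corresponding node. Lemma~\ref{dmoves} then shows that chord moves connect any two Hopf triples on the same diagram. So your argument is correct up to the divisor step, but the missing idea---replacing a path of divisors by a change of plane section through a node---is the actual content of the proof.
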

\begin{proof}
A nodal Hopf link is determined by a
Hopf divisor on a planar real algebraic chord diagram
by 
Proposition \ref{prop-thmH}. 
If a homeomorphism between the chord diagrams
of two nodal Hopf links respects the corresponding
Hopf divisors then there exists a 1-parametric
family of Hopf divisors on planar real algebraic 
chord diagrams producing an isotopy between the two
nodal Hopf link.
Lemmas \ref{dmove} and \ref{dmoves} reduce 
the general case to the case considered above.
\end{proof}

Let $(X,\Lambda)$ be a planar chord diagram
in $l$ circles with $\delta$ chords.
We say that a triple $(X,\Lambda,\Delta)$ is
a {\em Hopf triple} if $\Delta\subset \Lambda$ is a set of
$\delta+l$ points disjoint from the
chord endpoints, and such that  
each planar loop of $(X,\Lambda)$
contains a single point of $\Delta$.

Let $I\subset X$ be one of the chords of $(X,\Lambda)$.
The chord $I$ is adjacent to 
two plane loops, $\alpha^I_1$ and $\alpha^I_2$.
Let $\Delta'\subset X$ be a set of $l+\delta-2$
points disjoint from the chord endpoints,
and such that each planar loop of $(X,\Lambda)$
except for $\alpha^I_1$ and $\alpha^I_2$ contains
a single point of $\Delta'$.

Let $\Delta_+$ (resp. $\Delta_-$) be the union of
$\Delta'$
and the two-point set obtained by moving both points
of $\dd I$ in the direction (resp. contrary to
the direction) of the orientation of $\Lambda$. 
Note that both $(X,\Lambda,\Delta_+)$ and
$(X,\Lambda,\Delta_-)$ are Hopf triples.
In this case we say that these triples
are linked with a {\em chord move} in $I$.

\begin{lem}\label{dmove}
For any two Hopf triples $(X,\Lambda,\Delta_\pm)$
linked with a chord move there exists
a nodal Hopf link $A_0\subset\pp^3$
and two generic real planes $H_\pm\in\pp^3$
such that $(\R X_{A_0},\R S, S\cap\R H_{\pm})$
is homeomorphic to $(X,\Lambda,\Delta_\pm)$.
Here $X_{A_0}$ is the natural chord diagram
on the normalization $S$
of the nodal curve $A_0$.
\end{lem}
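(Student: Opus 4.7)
The plan is to construct the nodal Hopf link $A_0$ together with a plane $H_+$ realizing $\Delta_+$, and then to produce $H_-$ by continuously deforming $H_+$ through an auxiliary plane passing through the node $x_I\in\R A_0$ that corresponds to the chord $I$. A local analysis at $x_I$ will verify that this crossing implements exactly the chord move relating $\Delta_+$ and $\Delta_-$.

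For the construction of $A_0$ I would apply Proposition~\ref{prop-thmH}. Namely, I take a planar nodal real algebraic chord diagram $X_S$ whose underlying chord diagram is $(X,\Lambda)$, together with a Hopf divisor $D_+\subset\Sigma^*$ whose real part has a single point on each planar loop, positioned on the loops $\alpha_1^I,\alpha_2^I$ just in the orientation direction from the corresponding endpoints of $I$ and placed on the remaining $l+\delta-2$ loops according to $\Delta'$, and whose non-real part is a single pair of complex-conjugate points. By Proposition~\ref{prop-thmH}, the image $A_0=\iota_{D_+}(S)\subset|D_+|_\Sigma^\vee\approx\pp^3$ is a nodal Hopf link, and the real plane $H_+$ corresponding to $D_+$ is a generic plane such that $(\R X_{A_0},\R S,S\cap\R H_+)$ is homeomorphic to $(X,\Lambda,\Delta_+)$.

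To produce $H_-$, let $x_I\in\R A_0$ denote the node corresponding to $I$, and connect $H_+$ to an auxiliary real plane $H_0\ni x_I$ by a smooth path $H_t$, $t\in[0,\epsilon]$, with $H_\epsilon=H_+$; extend the path to $t\in[-\epsilon,\epsilon]$ and set $H_-:=H_{-\epsilon}$. For a generic choice of path the plane $H_t$ is generic for every $t\neq 0$, so its real intersection with $A_0$ varies continuously in $t$. Away from $x_I$ these intersection points remain close to the corresponding points of $\Delta'$, while near $x_I$ the plane $H_t$ meets each of the two local real branches of $A_0$ at $x_I$ in one point. Consequently $(\R X_{A_0},\R S,S\cap\R H_-)$ is again a Hopf triple supported on the same chord diagram, and is homeomorphic to either $(X,\Lambda,\Delta_+)$ or $(X,\Lambda,\Delta_-)$.

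The main obstacle is a local analysis at $x_I$ ensuring the second possibility. Taking real coordinates centered at $x_I$ so that the two oriented local real branches become the positive $x$- and $y$-axes, the plane $H_0$ has an equation $ax+by+cz=0$, and the family $ax+by+cz=t$ meets the two branches at $(t/a,0,0)$ and $(0,t/b,0)$. Choosing $a$ and $b$ of the same sign, an open, nonempty condition compatible with the other genericity requirements on $H_0$, the two intersection points near $x_I$ lie simultaneously on the forward side of both oriented branches for $t>0$ and simultaneously on the backward side for $t<0$. In view of the complex orientation of $\R S$, this is exactly the chord move interchanging $\Delta_+$ and $\Delta_-$, so $H_-=H_{-\epsilon}$ realizes $\Delta_-$ as required.
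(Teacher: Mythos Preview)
Your construction of $A_0$ and $H_+$ via Proposition~\ref{prop-thmH} is fine, and your local analysis at the node is correct in spirit. The gap is in the global step: the assertion that ``for a generic choice of path the plane $H_t$ is generic for every $t\neq 0$'' is not justified, and in fact fails. The non-generic real planes (those tangent to $A_0$ or passing through one of the $\delta$ nodes) form a codimension-one locus in the $3$-dimensional space of real planes, so a generic path from $H_+$ to an arbitrary $H_0\ni x_I$ will cross this locus. Each such crossing can change the Hopf triple realized by $H_t$; in particular there is no reason for the points of $S\cap\R H_0$ away from $x_I$ to lie in the arcs prescribed by $\Delta'$. A related issue is that for an arbitrary generic real plane $P$ the real section $P\cap\R A_0$ need not consist of exactly one point per planar loop, so even the statement ``$(\R X_{A_0},\R S,S\cap\R H_-)$ is again a Hopf triple'' is unsupported. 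Finally, you treat the sign condition on $a,b$ as a free choice of $H_0$, but since $H_0$ must be linked to the given $H_+$ through a wall-free path, you cannot choose it independently.

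The paper avoids this difficulty by reversing the order of construction: it starts from the degenerate divisor $D=D'\cup\partial I\cup\Pi$ (with $\Pi$ a conjugate pair), which is non-special on $\Sigma^I$, picks a nearby Hopf divisor $E\sim_{\Sigma^I}D$, and observes that in fact $D\in|E|_\Sigma$. Hence $D$ corresponds to an honest plane section $H$ of $A_0=\iota_E(S)$ passing through $x_I$, with the remaining $l+\delta-2$ real points already in the arcs of $\Delta'$. The planes $H_\pm$ are then \emph{small} perturbations of $H$ inside the pencil of real planes through $\Pi$; this pencil defines a totally real map $S\setminus\Pi\to\pp^1$, so every real plane in it cuts $\R S$ in exactly $d-2$ points and these move monotonically with respect to the complex orientation. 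That monotonicity is what forces both points near $\partial I$ to move to the same side of $I$, giving $\Delta_\pm$. Your local coordinate computation would be a valid replacement for this last orientation argument, but only after you have produced an $H_0$ whose section already agrees with $\Delta'$ off $x_I$; arranging that is precisely the content of the paper's $|E|_\Sigma$ trick.
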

\begin{proof}
Choose a real algebraic realization
$(\R X_S,\R S,D')$ of $(X,\Lambda,\Delta')$.
Define $D$ to be the divisor obtained
as the union of $D'\cup\dd I$ with
a pair $\Pi$ of complex 
conjugate points in $S\setminus \R S$.
Note that according to Lemma \ref{nsD}
the divisor $D$ is non-special in 
the singular surface $\Sigma^I$ corresponding
to 
the removal of $I$
from the chord diagram $X_S$. 
Thus there exists a real divisor $E$ close to $D$
and $\Sigma^I$-equivalent
to $D$ such that $E\cap\dd I=\emptyset$ and
$(\R X_S,\R S,E\cap\R S)$ is a Hopf triple.
Note that $D\in |E|_\Sigma$
since $D\in |E|_{\Sigma^I}$ and the value of 
a meromorphic function $f:S\to\pp^1$ with 
$(f)=D-E$ is zero (and thus is the same)
on both points of $\dd I$.

By Proposition \ref{prop-thmH}
$A_0=\iota_E(S)\subset\pp^3$
is a nodal Hopf link. Since $D\in |E|_\Sigma$
there exists a real plane $H\in\pp^3$ with 
$\iota_E^{-1}(H)=D$.
In particular, $H$ passes through
the node of $A_0$ corresponding to $I$.
Let $H_\pm$ be the real planes
obtained by perturbing $H$ to two different sides
of the node. Since the pencil of real planes
through $\Pi$ defines a totally real map 
$S\setminus\Pi\to\pp^1$ (i.e. the inverse
image of $\rp^1$ coincides with $\R S$),
the triples $(\R X_{A_0},\R S, S\cap\R H_{\pm})$
and $(X,\Lambda,\Delta_\pm)$ are homeomorphic.
\end{proof}

\begin{lem}\label{dmoves}
Any two Hopf triples
$(X,\Lambda,D_1)$ and $(X,\Lambda,D_2)$
on the same planar chord
diagram $(X,\Lambda)$ can be linked 
with a sequence of chord moves. 
\end{lem}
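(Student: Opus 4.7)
The plan is to argue by induction on the number of chords $\delta$ of $(X,\Lambda)$. The base case $\delta=0$ is immediate: each circle of $\Lambda$ is itself a single planar loop, so any two Hopf triples differ only by orientation-preserving self-homeomorphisms of $\Lambda$ and no chord moves are needed.

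For $\delta\ge 1$, I use the nested structure of non-crossing chords on each circle to locate a leaf chord $I$: a chord one of whose two adjacent planar loops, call it $\alpha_1$, consists of a single arc $a$ of $\Lambda$ (between the endpoints $p_1,p_2$ of $I$) together with $I$ itself. Let $\alpha_2$ be the other loop adjacent to $I$, and let $e_1^-,e_2^+$ denote the arcs of $\alpha_2$ on $\Lambda$ lying just before $p_1$ and just after $p_2$. Deleting $I$ produces the chord diagram $(X^I,\Lambda)$ with $\delta-1$ chords, in which $\alpha_1$ and $\alpha_2$ merge into a single loop $\tilde\alpha$ whose boundary contains the long arc
\[
\hat a \;=\; e_1^-\cup\{p_1\}\cup a\cup\{p_2\}\cup e_2^+
\]
in place of the three arcs $e_1^-,a,e_2^+$ present in $(X,\Lambda)$.

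Since every Hopf triple on $(X,\Lambda)$ has its $\alpha_1$-point on $a$, forgetting this point defines a projection $r$ from Hopf triples on $(X,\Lambda)$ to Hopf triples on $(X^I,\Lambda)$. Applying the inductive hypothesis to $r(D_1)$ and $r(D_2)$ yields a sequence of chord moves connecting them in $(X^I,\Lambda)$. Each such move uses some chord $J\neq I$; because the endpoints of $J$ are disjoint from $\{p_1,p_2\}$, the two points that the move places on $\Lambda$ lie on specific sub-arcs distinct from $a$. Hence each move lifts canonically to a chord move in $(X,\Lambda)$ (with $\tilde\alpha$ reinterpreted as $\alpha_2$ wherever needed) that leaves the $\alpha_1$-point on $a$ undisturbed. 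Applying the lifted sequence starting from $D_1$ produces a Hopf triple $D_\ast$ on $(X,\Lambda)$ with $r(D_\ast)=r(D_2)$.

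Two Hopf triples on $(X,\Lambda)$ with the same $r$-image can disagree only when the $\alpha_2$-point lies in $\{e_1^-,e_2^+\}$, since these two sub-arcs are indistinguishable inside the long arc $\hat a$. Unfolding the definition of the chord move for the leaf chord $I$ shows that $e_1^+=e_2^-=a$ (the only arc of $\alpha_1$), so the move fixes the $\alpha_1$-point on $a$ and simply swaps the $\alpha_2$-point between $e_1^-$ and $e_2^+$. Therefore at most one additional chord move in $I$ connects $D_\ast$ to $D_2$, completing the induction. The main delicate point to verify is that no move in the inductively obtained sequence forces the $\tilde\alpha$-point onto the sub-arc $a$ of $\hat a$, which would obstruct the canonical lift; this is precisely the role of the observation that $p_1,p_2$ are not chord endpoints of $X^I$, so no chord move in $(X^I,\Lambda)$ places a point immediately adjacent to them.
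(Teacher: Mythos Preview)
Your proof is correct and follows essentially the same inductive strategy as the paper: remove a leaf chord $I$ (the chord dual to a leaf edge of the tree of planar loops), apply the induction hypothesis to the reduced diagram and the divisors with the $\alpha$-point deleted, and lift back. The paper's argument is considerably terser---it stops at ``apply the induction hypothesis''---whereas you spell out the two points it leaves implicit: that each chord move in $(X^I,\Lambda)$ at a chord $J\neq I$ lifts to a chord move in $(X,\Lambda)$ keeping the $\alpha_1$-point on $a$ fixed (because every endpoint of $J$ lies on $\alpha_2$, not on $a$), and that after lifting one may need a single additional chord move at $I$ to reconcile an $\alpha_2$-point on $e_1^-$ with one on $e_2^+$. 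Your last paragraph is phrased a bit loosely---the point is not that moves in $X^I$ are forbidden from landing on $a$, but that when lifting you may always place the new $\alpha_2$-point on the $\alpha_2$-subarc adjacent to the endpoint of $J$ rather than on $a$---but the underlying reasoning is sound.
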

\begin{proof}
Inductively by $\delta$ we may assume that
the lemma holds for all planar chord diagrams
in less than $\delta$ chords.
Since $(X,\Lambda)$ is dual to a tree,
there exists a chord $I$ dual to a leaf edge
of the tree.

This means that there exists a planar loop 
$\alpha$ adjacent to $I$ and not adjacent to
any other chord of $(X,\Lambda)$. We apply
the induction hypothesis to the diagram
obtained by removing $I$ from $(X,\Lambda)$
and the divisors obtained by removing $\alpha\cap D_j$
from $D_j$, $j=1,2$.  
\end{proof}

\begin{coro}\label{allHd}
Two $MW_\lambda$-links of the same degree
and chirality are rigidly isotopic 
if they can be degenerated to the nodal Hopf link with the
same chord diagram.
\end{coro}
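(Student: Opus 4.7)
The plan is to concatenate three rigid isotopies. Let $\{A^{(i)}_t\}_{t\in[0,1]}$ denote the given equigeneric degeneration with $A^{(i)}_0 = L_i$ and $A^{(i)}_1 = L_0^{(i)}$, for $i=1,2$. First, truncate each degeneration: the sub-path $\{A^{(i)}_t\}_{t\in[0,1-\varepsilon]}$ is a rigid isotopy from $L_i$ to the smooth $MW_\lambda$-link $A^{(i)}_{1-\varepsilon}$, chosen arbitrarily close to $L_0^{(i)}$. The remaining task is to connect $A^{(1)}_{1-\varepsilon}$ to $A^{(2)}_{1-\varepsilon}$ by a rigid isotopy through smooth real algebraic curves of degree $d$.

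For this middle stage I would invoke Proposition \ref{nHlGd}. Since $w_\lambda$ is constant along an equigeneric family, $L_0^{(1)}$ and $L_0^{(2)}$ share the chirality of $L_1$ and $L_2$, so their chiralities agree; together with the hypothesis that their chord diagrams are homeomorphic, Proposition \ref{nHlGd} supplies a 1-parameter family $\{N_s\}_{s\in[0,1]}$ of nodal Hopf links of degree $d$ with $N_0 = L_0^{(1)}$ and $N_1 = L_0^{(2)}$. By Lemma \ref{lem-nodes} every $N_s$ has only real non-solitary nodes, and Lemma \ref{positive-perturb} provides, for each $s$, an equigeneric perturbation of $N_s$ with any prescribed choice of signs at its $\delta$ nodes. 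I would select at each node the sign dictated by the common chirality; the resulting smooth link $\widetilde N_s$ is then an $MW_\lambda$-link of the desired chirality. Arranging $\widetilde N_0 = A^{(1)}_{1-\varepsilon}$ and $\widetilde N_1 = A^{(2)}_{1-\varepsilon}$ is possible because the given degenerations are themselves positive perturbations near their endpoints.

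The main obstacle is making the perturbation $s \mapsto \widetilde N_s$ continuous on all of $[0,1]$. The construction in Lemma \ref{positive-perturb} amounts to choosing a section in the nonempty Zariski-open set $V_D(\Sigma') \setminus V_D(\Sigma)$ together with a real scalar of prescribed sign; both depend algebraically on the ambient data attached to $N_s$, so local continuous selections exist. I would patch these using a partition of unity on $[0,1]$, staying inside the $MW_\lambda$ stratum, which is open among smooth real algebraic curves of degree $d$ by local constancy of $w_\lambda$. The delicate points are parameter values where nodes of $N_s$ collide, where a chord endpoint crosses the support of the auxiliary divisor $D$ used to invoke Lemma \ref{positive-perturb}, or where $D$ itself ceases to be non-special; these should be handled by subdividing $[0,1]$ into finitely many sub-intervals and rechoosing the plane section locally, exploiting the three-dimensional freedom in $|D|_\Sigma$ guaranteed by Proposition \ref{prop-thmH}.
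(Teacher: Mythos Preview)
Your approach is essentially the same as the paper's: both reduce to Proposition~\ref{nHlGd} and then argue that the smooth $MW_\lambda$-links near a given nodal Hopf link can be connected to one another. The paper phrases this last step slightly differently, and the reformulation is worth noting because it dissolves the patching worries you raise.

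Rather than lifting the entire path $\{N_s\}$ to a continuous family $\{\widetilde N_s\}$, the paper proves a purely local statement: in a neighborhood of any fixed nodal Hopf link $\iota_D:S\to\pp^3$, the set of smooth $MW_\lambda$-links of the given chirality is open and \emph{connected}. The argument fixes the first three homogeneous coordinates (so the planar projection $B\subset\pp^2$ is held constant) and varies only $s_3\in V_D(S)$; Corollary~\ref{nsD} guarantees that the hyperplanes $V_D(\Sigma_I)\subset V_D(S)$ corresponding to keeping each node have the expected codimension, so the locus of positive-chirality smooth perturbations is a single chamber of a real hyperplane arrangement, hence convex. Combined with local connectedness of the space of planar diagrams, this gives the claim.

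Once you have this, there is no need to match $\widetilde N_0$ with $A^{(1)}_{1-\varepsilon}$ exactly, nor to worry about node collisions or divisor supports along the family: by compactness of $[0,1]$ you step through finitely many overlapping neighborhoods, and within each neighborhood any two smooth $MW_\lambda$-perturbations are already rigidly isotopic. Your partition-of-unity construction would also work, but the local-connectedness formulation is what makes the ``delicate points'' you flag disappear.
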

\begin{proof}
To deduce this statement from Proposition \ref{nHlGd} 
it suffices to show that there exists an open neighborhood 
of the map $\iota_D:S\to\pp^3$ (with $\delta$ nodes in the image)
in the space of all holomorphic maps $S\to\pp^3$ of the same degree
such that the space inside this neighborhood
formed by the $MW_\lambda$-links is open and connected.
Corollary \ref{nsD} implies such connectedness once we perturb
only one coordinate of $\pp^3$ leaving the other coordinates
(responsible for the planar diagram in $\rp^2$) unchanged.
Combining this with local connectedness of the space
of perturbations of the planar diagram we get the statement.
\end{proof}

\section{Generalization and proof of Theorem \ref{thmain}}
\label{s-l}
We say that a degree-chord diagram $(X,\Lambda)$,
see Definition \ref{def-degchord}, is 
{\em realized} by an $MW_\lambda$-link $L_0\subset\rp^3$ if $X(L_0)$
is homeomorphic to $(X,\Lambda)$ as the chord diagram
and a generic real plane section of $L_0$ containing a pair
of complex conjugate point has $a(\lambda)$ points on each planar
loop $\lambda$ in $X(L_0)$. Recall that $a(\lambda)$ does
not depend on the choice of the plane section.

The following theorem generalizes Theorem  \ref{thmain} to
nodal $MW_\lambda$-links. Thus its proof also provides
a proof of Theorem \ref{thmain}.
\begin{thm}\label{thm-gen}
The nodal $MW_\lambda$-links of degree $d$
and genus $g$ 
are classified up to rigid isotopy 
by the degree-chord diagrams.

Namely, 
any planar degree-chord diagram $(X,\Lambda)$ in $l=g+1$ circles 
with $\delta$ chords and the degree function on the planar
loops satisfying to \eqref{a-comp} is realized
by a nodal $MW_\lambda$-link
of genus $g$ and degree $d$.
Any two nodal
$MW_\lambda$-links with the same degree-chord
diagram and the same chirality
(i.e. the sign of $w_\lambda$ for its 
smooth $MW_\lambda$-perturbation)
are rigidly isotopic.
\end{thm}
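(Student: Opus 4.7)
The theorem makes two assertions: (i) every planar degree-chord diagram satisfying \eqref{a-comp} is realized by a nodal $MW_\lambda$-link, and (ii) two nodal $MW_\lambda$-links with the same degree-chord diagram and the same chirality are rigidly isotopic. Both will be reduced to the corresponding statements for nodal Hopf links (Propositions~\ref{prop-thmH} and \ref{nHlGd}) by means of the perturbation lemma (Lemma~\ref{positive-perturb}).

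\emph{Realization.}
Given $(X,\Lambda,a)$, I subdivide every planar loop $\lambda$ of degree $a(\lambda)$ into $a(\lambda)$ loops of degree $1$ by inserting $a(\lambda)-1$ new non-crossing chords inside the corresponding planar region. This produces a Hopf diagram $(X_H,\Lambda)$ with $d-3-g$ chords; Proposition~\ref{prop-thmH} realizes it as a nodal Hopf link, of any prescribed chirality by Proposition~\ref{nHlGd}. Applying Lemma~\ref{positive-perturb} to the chords of $X_H\setminus X$ only, with positive signs, I equigenerically perturb exactly those nodes and keep the chords of $X$. The resulting nodal algebraic curve is a nodal $MW_\lambda$-link (a second application of Lemma~\ref{positive-perturb} to the remaining chords produces a smooth $MW_\lambda$-perturbation) whose degree-chord diagram is $(X,\Lambda,a)$ by construction.

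\emph{Classification.}
Let $L_0,L_0'$ have the same degree-chord diagram and chirality. By Proposition~\ref{prop-degnHl} each degenerates to a nodal Hopf link. Inspecting the induction in Proposition~\ref{prop-mdeg}, each additional node can be arranged to lie inside a prescribed planar loop of the current diagram, so the resulting Hopf diagrams $X_H$ and $X_H'$ are both refinements of $X$ in the sense above. If $X_H=X_H'$, a straightforward extension of Corollary~\ref{allHd} to the nodal class (argued just as there, by perturbing only the fourth coordinate and invoking Corollary~\ref{nsD} for local connectedness) together with Proposition~\ref{nHlGd} concludes. The remaining task is therefore to show that any two Hopf refinements of the same $(X,\Lambda,a)$ can be linked by a rigid isotopy inside the class of nodal Hopf links refining $X$.

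\emph{The main obstacle.}
Within each planar loop $\lambda$ of degree $a(\lambda)$, Hopf refinements of $\lambda$ correspond bijectively to triangulations of an $a(\lambda)$-gon, and any two such triangulations are connected by diagonal flips. Each flip exchanges two chords across a quadrilateral subregion; using the Abel--Jacobi framework of Section~\ref{nRS}, it is realized by a real one-parameter family of Hopf divisors on $S$ that passes through the singular surface $\Sigma^I$ in which the flipped chord $I$ is temporarily removed, and then reintroduces the chord on the other side. The reciprocity law~\eqref{recipro}, combined with Corollary~\ref{nsD} for non-speciality and Lemma~\ref{positive-perturb} applied relative to the fixed chords $X\subset X_H$, furnishes the required family and the correct signs on both sides of the flip; this is exactly the mechanism of Lemmas~\ref{dmove}--\ref{dmoves} performed relative to $X$. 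Once this flip realization is established, rigid isotopy of $L_0$ and $L_0'$ follows by chaining flips and invoking the nodal version of Corollary~\ref{allHd}. The match of chirality is forced because orientation-reversing elements of $PGL_4(\R)$ lie in a separate connected component and reverse $w_\lambda$. This proves Theorem~\ref{thm-gen} and, as the special case $\delta=0$, Theorem~\ref{thmain}.
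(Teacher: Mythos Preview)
Your global strategy coincides with the paper's: reduce to nodal Hopf links via Proposition~\ref{prop-degnHl}, observe that the resulting Hopf chord diagrams are both refinements of the given $(X,\Lambda,a)$, and then connect any two such refinements by local moves. The realization part is also fine and matches Lemma~8.2 of the paper.

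The gap is in your ``main obstacle'' paragraph. The mechanism you describe for realizing a flip --- passing through $\Sigma^I$ with the chord $I$ removed and then reinserting a chord elsewhere --- is not well-defined: removing a chord increases $\dim|D|_\Sigma$ by one, so $\iota_D$ changes target, and nothing in Section~\ref{nRS} lets you ``reintroduce the chord on the other side'' inside a single family of spatial curves. You also invoke Lemmas~\ref{dmove}--\ref{dmoves} ``relative to $X$'', but those lemmas do \emph{not} change the chord diagram at all; they move the \emph{Hopf divisor} across a fixed chord, and their purpose is Proposition~\ref{nHlGd} (rigid isotopy of Hopf links with the \emph{same} diagram). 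They cannot produce a flip between distinct refinements. Finally, the reciprocity law~\eqref{recipro} is a device internal to the proof of the Abel--Jacobi theorem and plays no role here.

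What the paper actually does is the chord \emph{slide} of Lemma~\ref{lem-slide}: one degenerates further to a curve with an ordinary \emph{triple point}, obtained from a non-nodal planar diagram in which two chord endpoints collide (the diagrams $Y_S^0$ and $Z_S^0$). The two Hopf refinements $Y$ and $Z$ related by a slide both perturb this triple-point curve, and the perturbation can be done keeping all the chords of $(X,\Lambda)$ as nodes and with the correct signs, via Corollary~\ref{nsD} and Lemma~\ref{positive-perturb}. The key geometric point you are missing is that the three local branches at the triple point are not coplanar (checked by a degree count against Lemma~\ref{lem-SO}), which is what allows the two resolutions in Figure~\ref{xyz-sign-perturb} to be connected through the singular curve. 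Once Lemma~\ref{lem-slide} is in hand, any two Hopf refinements of $(X,\Lambda,a)$ are linked by a chain of such slides, and the theorem follows.
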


Suppose that $(X,\Lambda)$ is a planar
degree-chord diagram of nodal Hopf link (i.e. with the
degree of each planar loop equal to 1).
Consider the diagram $(Y,\Lambda)$ obtained from $(X,\Lambda)$
by removing
some chords. Then $(Y,\Lambda)$ is also a planar chord diagram.
A planar loop $\lambda$ of $Y$ is composed of several planar loops of $X$.
We set the degree of $\lambda$
to be the number of such
planar loops in $X$.
\begin{lem}
The degree-chord diagram $(Y,\Lambda)$ is realizable as the result of perturbation
of a nodal Hopf link with the chord diagram $(X,\Lambda)$.
\end{lem}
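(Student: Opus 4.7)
The plan is to realize $(Y,\Lambda)$ by partially smoothing a nodal Hopf link with chord diagram $(X,\Lambda)$: if $I_1,\dots,I_k$ denote the chords of $(X,\Lambda)$ that do not appear in $(Y,\Lambda)$, I would smooth exactly those $k$ nodes. By Proposition \ref{prop-thmH} (whose applicability relies on Corollary \ref{nsD} to guarantee non-speciality of a Hopf divisor), I first choose a real M-curve $S$ of genus $g = l-1$, a real planar chord diagram $X_S$ on $S$ combinatorially homeomorphic to $(X,\Lambda)$, and a Hopf divisor $D$ supported in $\Sigma^*$; the map $\iota_D$ is then an embedding of $\Sigma$ into $\pp^3$ whose image $A_0$ is a nodal Hopf link with chord diagram $(X,\Lambda)$.

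Next I would apply Lemma \ref{positive-perturb} to the chosen chords $I_1,\dots,I_k$ of $X_S$: the hypotheses $\supp D \subset \Sigma^*$, non-speciality of $D$ in $\Sigma$, and $\dim |D|_\Sigma \ge 3$ all hold, so the lemma yields a real equigeneric perturbation $A_t$ of $A_0$ that smooths the nodes over $I_1,\dots,I_k$ (with any prescribed signs) while preserving all the remaining nodes. The chord diagram of $A_t$ is thus obtained from that of $A_0$ by deleting the chords $I_1,\dots,I_k$, hence is homeomorphic to $(Y,\Lambda)$. To certify that $A_t$ is a nodal $MW_\lambda$-link in the sense of Definition \ref{def-nMW}, I would iterate Lemma \ref{positive-perturb} once more on the remaining $\delta-k$ nodes of $A_t$, smoothing them all (with signs matching those already chosen, so as to produce a definite chirality) and obtaining a genuine smooth $MW_\lambda$-link; this exhibits $A_t$ as an equigeneric degeneration of a smooth $MW_\lambda$-link.

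The remaining task is to verify that the degree function on the planar loops of $(Y,\Lambda)$ works out as prescribed. I would fix a generic real plane $H \subset \pp^3$ through a pair $\{z,\bar z\}$ of complex conjugate points of $A_0$; since every planar loop of $(X,\Lambda)$ has degree one in the nodal Hopf link $A_0$, the plane $H$ meets $\R A_0$ transversely in exactly $l + \delta$ real points, one on each planar loop of $(X,\Lambda)$. For small $t$ this complex conjugate pair persists in $A_t \setminus \R A_t$, so a small real deformation $H_t$ of $H$ still meets $\R A_t$ transversely in $l + \delta$ real points lying close to the original ones. Under the natural identification of planar loops of $(Y,\Lambda)$ with clusters of planar loops of $(X,\Lambda)$ fused along the removed chords, the number of intersections of $H_t$ with a planar loop $\lambda$ of $(Y,\Lambda)$ equals the number of planar loops of $(X,\Lambda)$ composing $\lambda$, i.e.\ the prescribed degree of $\lambda$. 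The main delicate point I anticipate is precisely the interplay of these two verifications --- that the partial smoothing produces a nodal $MW_\lambda$-link and that the degrees of the new planar loops are computed by transverse plane sections --- both of which are handled by combining Lemma \ref{positive-perturb} with the continuity of transverse intersections under the deformation.
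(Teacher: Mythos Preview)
Your proposal is correct and follows essentially the same route as the paper: construct the nodal Hopf link via $\iota_D$ for a Hopf divisor (Proposition \ref{prop-thmH} and Corollary \ref{nsD}), perturb the chosen nodes positively while keeping the others via Lemma \ref{positive-perturb} (with Lemma \ref{lem-DI} ensuring non-speciality is inherited), observe that a further positive perturbation of the remaining nodes yields a smooth $MW_\lambda$-link, and read off the degrees of the planar loops from a nearby transverse plane section. Your write-up is simply more explicit than the paper's about the second application of Lemma \ref{positive-perturb} and the continuity argument for $a(\lambda)$.
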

\begin{proof}
Consider a real algebraic chord diagram $X_S$ corresponding
to $(X,\Lambda)$, and a divisor $D$ with a pair of complex conjugate 
points and a single non-nodal point at each of the planar loops of $X_S$.
Let $\iota_D:S\to |D|_\Sigma^\vee\approx\pp^3$
be the corresponding map
(as in \eqref{n-iota}).
As in the proof of Proposition \ref{prop-thmH},
by Corollary \ref{nsD}, Lemma \ref{lem-DI},
and Lemma \ref{positive-perturb}
the nodes corresponding to the chords missing in $(Y,\Lambda)$
may be perturbed in a positive way keeping the other nodes.
The result is a nodal $MW_\lambda$-link $L_Y$ as it could be further
perturbed to a smooth $MW_\lambda$-link.
The number of points of $L_Y$ in its plane section near $D$
agrees with our definition of $a(\lambda)$.
\end{proof}

Suppose $(X,\Lambda)$ is a planar chord diagram,
and $x,y,z\in \Lambda$ be three distinct points different
from the endpoints of the chords and contained in a single
planar loop $\lambda$ of $X$ in the order agreeing with 
the cyclic orientation of $\lambda$.
Consider a point $y'$ on $\lambda$ close to $y$ and a point
$z'$ on $\lambda$ close to $z$ so that the cyclic order
of $x,y,y',z,z'$ agrees with that on $\lambda$.

Form a planar diagram $(Y,\Lambda)$ by attaching 
two new chords to $X$: the one connecting $x$ and $y$ and the
one connecting $y'$ and $z$, see Figure \ref{perestroika}.
\begin{figure}[h]
\includegraphics[height=25mm]{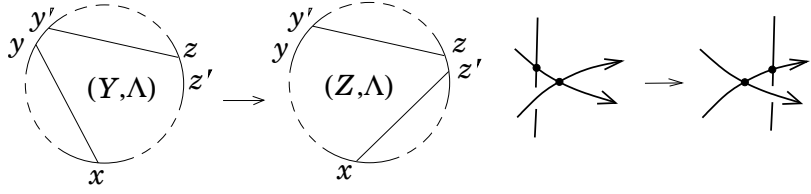}
\caption{The chord diagrams $(Y,\Lambda)$, $(Z,\Lambda)$ and
the resulting modification of nodal links.
\label{perestroika}}
\end{figure}
Also form a planar diagram $(Z,\Lambda)$ by attaching 
two new chords to $X$: the one connecting $x$ and $z'$ and the
one connecting $y$ and $z$.
%
In this case we say that the chord diagrams $(Y,\Lambda)$
and $(Z,\Lambda)$ differ by a {\em chord slide}.

Let $(X',\Lambda)$ be the plane diagram obtained from
$(Y,\Lambda)$ by removing the chord $[xy]$ from $(Y,\Lambda)$ 
(which results in the same diagram as removing
the chord $[xz']$ from $(Z,\Lambda)$).
One of the planar loop of $X'$
corresponds to two planar loops of $Y$ (or of $Z$).
We set its degree equal to 2. All other planar loops
have degree 1. 
This turns 
$(X',\Lambda)$ into a degree-chord diagram.
We consider the diagrams
$(Y,\Lambda)$ and $(Z,\Lambda)$
as the degree-chord diagrams with the degrees
of all planar loops equal to 1.
\begin{lem}\label{lem-slide}
Suppose that two nodal $MW_\lambda$-links $L_y,L_z\subset\rp^3$
both have the degree-chord diagram $(X',\Lambda)$,
and degenerate to nodal
Hopf links with the chord diagrams $(Y,\Lambda)$ and
$(Z,\Lambda)$.
Then $L_y$ and $L_z$ are rigidly isotopic.
\end{lem}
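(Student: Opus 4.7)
The plan is to realize both $L_y$ and $L_z$ as $\iota_D$-images of real divisors on a single singular Riemann surface $\Sigma'$ corresponding to the common chord diagram $X'$, and then to connect the defining divisors by a continuous path along which the image under $\iota$ remains a nodal $MW_\lambda$-link with diagram $X'$. The geometric point is that the chord slide between $Y$ and $Z$ is a modification supported on the unique degree-two planar loop $\lambda_0$ of $X'$, and the combinatorial freedom of the two divisor points on $\lambda_0$ suffices to interpolate between the two configurations.

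Concretely, I would fix a real algebraic chord diagram $X'_S$ on a real Riemann surface $S$ representing $X'$, with corresponding singular surface $\Sigma'$. By Proposition \ref{prop-thmH} the nodal Hopf link $H_Y$ is $\iota_{D_Y}(S)$ for a Hopf divisor $D_Y$ on the singular surface $\Sigma_Y$ obtained from $\Sigma'$ by adjoining the node corresponding to chord $[xy]$; Lemma \ref{positive-perturb} applied to this node with the appropriate sign realizes $L_y$ via a linear system that factors through $\Sigma'$, and the pullback of a generic real plane section of $L_y$ through a pair of complex conjugate points yields a real divisor $D_y$ on $S$ with $\supp D_y \subset \Sigma'^*$ consisting of one point on each degree-one planar loop of $X'$, two points on $\lambda_0$, and one complex conjugate pair. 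The symmetric construction gives $L_z = \iota_{D_z}(S)$. The divisors $D_y$ and $D_z$ differ only in how their two points on $\lambda_0$ are distributed, and these points vary in a connected space (the symmetric square of $\lambda_0$ minus the chord endpoints is path-connected, by a variant of the dual tree argument of Lemma \ref{dmoves}); I would therefore connect $D_y$ to $D_z$ by a continuous family $D_t$ of real divisors on $S$ with the same distribution. By Corollary \ref{nsD} each $D_t$ is non-special, so $|D_t|_{\Sigma'} \approx \pp^3$ and $\iota_{D_t}\colon S\to\pp^3$ is well-defined, and adapting Steps 1--6 of the proof of Proposition \ref{prop-thmH} gives that $\iota_{D_t}(S)$ is a nodal $MW_\lambda$-link with chord diagram $X'$ for all $t$. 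The chirality is constant along a connected family, so this yields the desired rigid isotopy (up to a projective transformation in the identity component of the chirality-preserving subgroup of $PGL_4(\R)$).

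The hard part is the adaptation of Proposition \ref{prop-thmH} from the Hopf divisor case --- where each planar loop contains a single divisor point of odd degree --- to the present setting in which $\lambda_0$ carries two divisor points. The parity argument used in Step 2 of that proof must be replaced by the real-branch parity, namely that a real plane meets each real branch in an odd number of points; the combinatorics of $X'$ along $\lambda_0$ force the two divisor points on $\lambda_0$ to lie on the same real branch, so the odd-per-branch count continues to hold. One must also verify that along $D_t$ no divisor point coalesces with a chord endpoint and that no unintended additional nodes appear, but both are codimension-positive degeneracies that can be avoided by choosing the path to be generic within the allowable configuration space.
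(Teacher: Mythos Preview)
Your approach has a concrete dimension error that undermines the whole argument. You claim that non-speciality of $D_t$ in $\Sigma'$ gives $|D_t|_{\Sigma'}\approx\pp^3$, but count again: $X'$ has one fewer chord than the nodal Hopf diagram $Y$, namely $\delta'=d-4-g$, so Corollary~\ref{RRthm} yields $\dim|D_t|_{\Sigma'}=d-g-\delta'=4$, not $3$. Thus the complete linear system sends $S$ to $\pp^4$, and $L_y,L_z\subset\pp^3$ are only obtained after a further projection from a point of $\pp^4$. Your isotopy would have to carry not just the divisor $D_t$ but also this projection center, and a generic projection $\pp^4\dashrightarrow\pp^3$ of a curve may well acquire extra nodes; controlling this is exactly the difficulty you have not addressed. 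Relatedly, your proposed adaptation of Step~2 of Proposition~\ref{prop-thmH} breaks down on the degree-$2$ loop $\lambda_0$: the Bezout/parity count there becomes tight rather than contradictory, so you cannot exclude an unintended self-intersection supported on $\lambda_0$ by that argument alone.

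The paper's proof sidesteps both issues by passing through a common \emph{non-nodal} degeneration with a genuine triple point. It attaches to $X$ the two chords $[xy]$ and $[yz]$ sharing the endpoint $y$ (and symmetrically $[xz]$, $[yz]$), producing singular surfaces for which $V_D$ is cut by the single codimension-$2$ condition $s(x)=s(y)=s(z)$; this gives the \emph{same} map $\iota_D:S\to\pp^3$ for both, with $\dim|D|_\Sigma=3$ exactly. The image has a triple point, and Lemma~\ref{positive-perturb} perturbs it to factor through $\Sigma'$ with prescribed crossing signs. Deforming the non-nodal diagrams to the nodal $Y_S$ and $Z_S$ then produces the required isotopy entirely inside $\pp^3$. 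The triple-point trick is the missing idea in your attempt.
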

\begin{proof}
Choose a real Riemann surface $S$
of genus $g$ with
$l=g+1$ real branches,
and an orientation-preserving homeomorphism between
$\R S$ and $\Lambda$.
Consider the real algebraic chord diagram $Y^0_S$ on $S$
obtained by attaching all chords of the diagram $(X,\Lambda)$,
a chord connecting $x$ and $y$, and a chord connecting $y$ and $z$.
Also consider the real algebraic chord diagram $Z^0_S$ on $S$
obtained by attaching all chords of the diagram $(X,\Lambda)$,
a chord connecting $x$ and $z$, and a chord connecting $y$ and $z$.
Note that both $Y^0_S$ and $Z^0_S$ are planar diagrams
which are not nodal. Also note that
that $Y^0_S$ and $Z^0_S$ can be perturbed
to nodal diagrams $Y_S$ and $Z_S$ from Figure
\ref{perestroika}.

Choose a divisor on $D$ of degree $l+\delta+2$
consisting of a pair of complex conjugate
points on $S$ and a single non-nodal point at each
planar cycle of $\R Y^0_S$.
Note that then the same divisor will have a single point 
at each planar cycle of $\R Z^0_S$.

Real algebraic chord diagrams $Y^0_S$ and $Z^0_S$ define the same
map $\iota_D:S\to |D|_\Sigma\approx\pp^3$
since the linear system $|D|_\Sigma$
is the same for both diagrams: the functions in $V_D$ 
have the same values at $x$, $y$ and $z$.
The image $A=\iota_D(S)$ has a triple point with
three real branches
at $\tau=\iota_D(x)=\iota_D(y)=\iota_D(z)$,
the rest of the singularities are 
nodes corresponding to the chords of $(X,\Lambda)$.
This can be seen in the same way as in the proof
of Proposition \ref{prop-thmH}.

We claim that all three local branches of $A$ cannot be tangent
to the same plane, as otherwise the corresponding
plane section has a tangency
of order 6 that must be repartitioned to the planar
cycles.
There are $l+g-3$ planar cycles which are not 
adjacent to the points $x,y,z\in Y^0_S$. 
As the degree of each of them is 1,
and the degree
of $A$ is $l+g+2$ we get a contradiction.
Thus we may choose a generic point $p\in\rp^3$
so that the local intersection sign of the $y$-branch and
the $z$-branch with the $x$-branch
of $\pi_p\circ\iota_D(\R S)\subset\rp^2$ coincide,
see Figure \ref{xyz-sign-perturb}.
\begin{figure}[h]
\includegraphics[height=15mm]{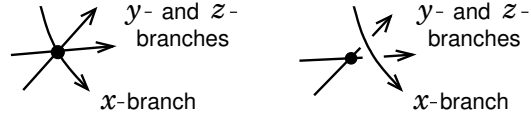}
\caption{Perturbation of the triple point.
\label{xyz-sign-perturb}}
\end{figure}
As in the proof of Proposition \ref{prop-thmH} we change the
coordinates in $\rp^3$ so that $p=(0:0:0:1)$. 

Let $\Sigma'$ be the singular surface obtained
by contracting $S$ along each chord of $(X',\Lambda)$. 
Using Corollary \ref{nsD} and Lemma \ref{lem-DI}
(cf. Lemma \ref{positive-perturb})
we may perturb the map $\iota_D$
keeping the image of the curve
under $\pi_p$ (the plane diagram)
so that it factors through an embedding of $\Sigma'$,
and so that the new crossing points resulting
from the perturbation are positive, 
see Figure \ref{xyz-sign-perturb}.
Performing the same perturbation for
a family of chord diagrams connecting 
$Y^0_S$ to $Y_S$ and $Z^0_S$ to $Z_S$
we obtain an isotopy between $MW_\lambda$-links
degenerating to the nodal Hopf links with
the chord diagrams $(Y,\Lambda)$ and $(Z,\Lambda)$.
\end{proof}

\begin{proof}[Proof of Theorem \ref{thm-gen}]
\ignore{
Choose a real Riemann surface $S$
of genus $g$ with the real locus $\R S$ consisting of $l=g+1$
components, and an orientation-preserving homeomorphism between
$\R S$ and $\Lambda$.
Then choose a divisor on $D$ of degree $l+\delta+2$
consisting of a pair of complex conjugate
points on $S$ and a single non-nodal point at each of the
$l+g$ planar cycles of $\R X_S$.
Note that possible choices form a path-connected space. 

If $X_S$ has a planar cycle of degree greater than
zero then we may introduce an extra chord to $X_S$
keeping the diagram planar and subdividing

%
%
}
\ignore{
Suppose that $(X,\Lambda)$ is a planar degree-chord diagram realized by a map..
By Proposition \ref{prop-mdeg} we may 

For each planar loop $\lambda$ of degree $d(\lambda)>1$ we may
introduce $d(\lambda)-1$ additional chords to subdivide $\lambda$
into $d(\lambda)-1$ loops of degree 1 while keeping the diagram planar.
The resulting degree-chord diagram is realizable by
a nodal Hopf Link
as the image of the map $\iota_D:S\to\pp^3$
according to Proposition \ref{prop-thmH}.
As in the proof of this proposition we may perturb the nodes
corresponding to the additional chords to obtain
a nodal $MW_\lambda$-link with the chord diagram $(X,\Lambda)$
with the help of Corollary \ref{nsD}.
}

If we have two nodal $MW_\lambda$-links corresponding to
the same planar degree-chord diagram $(X,\Lambda)$ then both of them
degenerate to
nodal Hopf links as in Proposition \ref{prop-mdeg}.
The chord diagrams 
of these nodal Hopf links must contain $(X,\Lambda)$
as a subdiagram so that the degree of a planar cycle in $X$
equals to the number of the planar cycles that appear in
the subdivisions of this cycle in the larger diagrams.
But then the two larger chord diagrams can be connected
with a sequence of chord slides. Theorem now follows
from Lemma \ref{lem-slide}.
\ignore{
Recall that in the proof of Proposition \ref{prop-thmH}
we defined $\Delta\subset\Gamma$ as the vector subspace
cut by the hyperplanes corresponding to the sections having
the same values on the chord endpoints and have shown that
$\dim\Delta=4=\dim\Gamma-\delta$.
In particular, the hyperplanes defining $\Delta$ are transversal.
Therefore, perturbations of the nodes of $L_0$ are independent.
We get a nodal Hopf link corresponding to $(X,\Lambda)$ by perturbing
the additional chords we have introduced to reduce the diagram
to a diagram of a nodal Hopf link.

The same argument shows that two nodal $MW_\lambda$-links
with the same sign of $w_\lambda$ are rigidly isotopic if
they can be degenerated to nodal Hopf links with the same 
chord diagrams. 
By Proposition \ref{prop-degnHl} any nodal $MW_\lambda$-link can be 
degenerated to a nodal Hopf link.

A nodal Hopf link $L_0\subset\rp^3$ cannot acquire a new node by 
Corollary \ref{maxco}.
But we can consider a degeneration of $L_0$ corresponding
to a collision of two nodes.
Namely,
consider the topological space $Y$
obtained from the disjoint union 
$\Lambda$ of $l$ circles $S^1=\dd D^2$
by attaching to it $\delta-2$ chords $I$
at disjoint pairs of points and a triangle
$T$ along three vertices $v_1,v_2,v_3$, see
Figure \ref{triangle-chord-diagram}.
We say that the resulting pair $(Y,\Lambda)$ is a {\em planar} $T$-diagram
if $Y$ is embedded to the disjoint union
of $l$ disks $D^2$ so that the restriction of this embedding to
$\Lambda$ is the identity map.
Note that once we choose two sides in the triangle $T$ 
we may slightly deform them into the disjoint chords
for $\Lambda$ so that the resulting diagram of $\delta$ chords
is planar. Thus a planar $T$-diagram determines three
nearby planar chord diagrams.

As in the proof of Proposition \ref{prop-thmH}
we consider an orientation-preserving diffeomorphism
between $\Lambda$ and an (abstract) real $M$-curve $\R \tilde A$.
Once again, choose a real divisor $D$ on $\tilde A$ so that
$D$ contains a single point at each of the $l+\delta$ planar loops
of the image $L_0=\alpha(\R \tilde A)$ as well as a pair
of complex conjugate points.
Each chord $[a,b]$ of $(Y,\Lambda)$ connecting $a,b\in\R \tilde A$
defines a hyperplane $s(a)=s(b)$ in the space $\Gamma\ni s$
of section of the line bundle defined by $D$.

The triangle $T$ defines a codimension 2 linear subspace in $\Gamma$
by setting $s(v_1)=s(v_2)=s(v_3)$ that can be thought of as
the intersection of two hyperplanes of type $s(v_j)=s(v_k)$
one two sides of $T$ are chosen.
Denote with $\Delta$ the intersection of this linear subspace
and the chord hyperplanes.
Once again, the Riemann-Roch theorem implies that 
$\dim\Delta\ge \deg D+1-d-\delta=4$ and the strict
inequality $\dim\Delta>4$ would produce a map $\R \tilde A\to\pp^3$
with an extra node and thus $l+\delta+1=d-1$ distinct planar loops
contradicting to \eqref{sumloops}. Thus $\dim\Delta=4$
and $\dim\Gamma=4+\delta$, in particular $\Gamma$ is 
a regular linear system.

We claim that the map 
\[
\alpha_\Delta:\tilde A\to\pp(\Delta^*)\approx\pp^3
\]
sending $x\in\tilde A$ to the hyperplane in $\Delta$ 
consisting of sections from $\Delta$ 
vanishing at $x$ is a (well-defined) immersion.
For well-definedness we need to show that 
for any $x\in \tilde A$ there exists a section
from $\Delta$ not vanishing at $x$.
For 

The image $A=\alpha_\Delta(\tilde A)\subset\pp^3$ is a curve with 
$\delta-2$ double points and one triple point.
}
\end{proof}

\bibliography{b}
\bibliographystyle{plain}

\end{document}